\def\from{\colon}
\def\Julia{\mathcal{J}}
\def\Rat{\mathrm{Rat}}
\def\QB{\mathcal{QB}}
\def\Cbb{\mathbb{C}}
\def\Hbb{\mathbb{H}}
\def\Rbb{\mathbb{R}}
\def\Bcal{\mathcal{B}}
\def\Ecal{\mathcal{E}}
\def\Hcal{\mathcal{H}}
\def\Lcal{\mathcal{L}}
\def\Pcal{\mathcal{P}}
\DeclareMathOperator{\SL}{SL}
\DeclareMathOperator{\PSL}{PSL}
\DeclareMathOperator{\Aut}{Aut}
\DeclareMathOperator{\QF}{\mathcal{QF}}
\newtheorem{thm}{Theorem}[section]
\newtheorem{coro}[thm]{Corollary}
\newtheorem{lem}[thm]{Lemma}
\newtheorem{prop}[thm]{Proposition}
\theoremstyle{definition}
\newtheorem{defn}[thm]{Definition}
\theoremstyle{remark}
\title{Pressure metrics in Geometry and Dynamics}
\author{Yan Mary He}
\address{Department of Mathematics, University of Oklahoma, Norman, OK 73019, USA}
\email{he@ou.edu}
\author{Homin Lee}
\address{Department of Mathematics, Northwestern University, Evaston, IL 60208, USA}
\email{homin.lee@northwestern.edu}
\author{Insung Park}
\address{Institute for Mathematical Sciences, Stony Brook University, Stony Brook NY 11794-3660, USA}
\email{insung.park@stonybrook.edu}
\begin{document}

\begin{abstract}
In this article, we first survey  results on pressure metrics on various deformation spaces in geometry, topology, and dynamics. We then discuss pressure semi-norms and their degeneracy loci in the space of quasi-Blaschke products. 
\end{abstract}

\maketitle
\tableofcontents

\section{Introduction}
Suppose that $S$ is a closed orientable surface of genus $g\ge2$. The Teichm\"uller space $T(S)$ of $S$, which is the space of holomorphic %or hyperbolic
structures on $S$, plays a fundamental role in modern mathematics. The topology and geometry of the Teichm\"uller space have been investigated from numerous viewpoints. Ahlfors proved that the Teichm\"uller space $T(S)$ is homeomorphic to $\mathbb R^{6g-6}$ \cite[Theorem 14]{Ahlfors_QCMappings}. %; also see \cite{Fricke}.
%\todo[inline]{The first paper that proved $T(S) \cong \mathbb{R}^{6g-6}$ is \cite[Theorem 14]{Ahlfors_QCMappings} where the Teichm\"uller metric is also defined. Ahlfors attributed the main idea to Teichm\"uller. I don't have access to \cite{Fricke}, and no reviews are written for the paper in Mathscinet. I guess it is about the Fricke space, the space of ${\rm PSL}(2,\mathbb{R})$ up to conjugate. But we might need to edit this paragraph. } 
Bers proved that $T(S)$ is biholomorphic to an open bounded domain in $\mathbb{C}^{3g-3}$ \cite{Bers_EmbBdd}. %Then the metric geometric properties of $T(S)$ have been extensively studied. 
There are a number of metrics on the Teichm\"uller space $T(S)$ defined from different perspectives, e.g., the Teichm\"uller metric, the Weil-Petersson metric, and the Thurston metric. We refer the interested readers to \cite{ImaTan,Hub_vol1} for comprehensive accounts of Teichm\"uller spaces.

Wolpert proved that the Weil-Petersson metric on $T(S)$ can also be obtained {\color{black} by means of} the second-order derivatives of the length functions for generic geodesics \cite{Wolpert_WPmetricbyThurston}. More recently, Bridgeman and McMullen showed that the Weil-Petersson metric can be reconstructed using thermodynamic formalism \cite{Bridgeman_WPMetricQF,McMullen08}. More precisely, they proved that the Weil-Petersson metric is a constant multiple of the so-called {\it pressure metric}.

Since then, the idea of constructing pressure metrics has been applied to various deformation spaces in geometry and topology. For example, pressure metrics have been {\color{black} constructed and }
studied for quasi-Fuchsian spaces of closed orientable surfaces \cite{BridgemanTaylor2, Bridgeman_WPMetricQF}, Teichm\"uller spaces and quasi-Fuchsian spaces of punctured surfaces \cite{Kao_PressureMetTeichPuncSurf, BCK_PressureMetricQFPunctred}, Teichm\"uller spaces of bordered surfaces \cite{Xu_PressureMetricBordered}, and deformation spaces of Anosov representations \cite{BCLS}. Pressure metrics have also been defined on the moduli space of metric graphs \cite{PollicottSharp_WPMetricMetricGraph} and on the Culler-Vogtmann outer spaces \cite{Aougab23}. In Sections \ref{sec:McMullen}--\ref{sec:AnosovReps}, we will discuss the results of McMullen \cite{McMullen08}, Bridgeman-Taylor \cite{BridgemanTaylor2}, Bridgeman \cite{Bridgeman_WPMetricQF}, and Bridgeman-Canary-Labourie-Sambarino \cite{BCLS}.

According to Sullivan's dictionary, Blaschke products in complex dynamics can be considered as an analogue of hyperbolic surfaces. The space of degree-$d$ Blaschke products $\Bcal_d$ corresponds to the Teichm\"uller space $T(S)$. The analogies between $\Bcal_d$ and $T(S)$ have been studied in \cite{McM_DynDisk}, and the degeneration of Blaschke products and the boundary of $\Bcal_d$ have been investigated in \cite{McM_CompExpCircle,McM_RibbonRtreeHoloDyn,Luo_GeoFiniteDegenI}.

However, compared to metrics on Teichm\"uller spaces, metrics on the space of Blaschke products are less {\color{black} studied}. McMullen introduced the first metric on $\Bcal_d$ in \cite{McMullen08} using thermodynamic formalism, analogous to the construction of the pressure metric on the Teichm\"uller space $T(S)$. Ivrii studied the completion of this metric for the degree-$2$ case in \cite{Ivrii} in analogy to augmented Teichm\"uller spaces \cite{Bers_AugTeich}, which is the completion of Teichm\"uller spaces with respect to Weil-Petersson metrics \cite{Masur_WPCompletion}.
Nie and the first author constructed pressure metrics on the hyperbolic components in the moduli space of degree-$d$ rational maps for $d\ge2$ \cite{HeNie_MetricHypComp}. Section \ref{sec:HeNie} will summarize the results from \cite{HeNie_MetricHypComp}. 

%{\tiny We underline that pressure metrics are not inherently non-degenerate. The construction of pressure metrics ensures only positive semi-definiteness. Although pressure metrics are known to be positive definite in most cases, there are certain degeneracy loci; see \cite{Bridgeman_WPMetricQF} for the case of quasi-Fuchsian spaces. In particular, the non-degeneracy of pressure metrics on the Blaschke and quasi-Blaschke products spaces is not known. In Section \ref{sec_DegeLoci_QB}, we study degeneracy loci of pressure metrics on deformation spaces of quasi-Blaschke products.}

In general, using thermodynamic formalism, we obtain positive semi-definite symmetric bilinear 2-forms $\langle\cdot,\cdot\rangle_\Pcal$ and semi-norms $||\cdot||_\Pcal$ on deformation spaces. In some cases, the 2-forms $\langle\cdot,\cdot\rangle_\Pcal$ are positive definite; that is, they are Riemannian metrics. In general, however, these forms may have degenerate vectors. Hence, we refer to $\langle\cdot,\cdot\rangle_\Pcal$ as {\it pressure forms} and $||\cdot||_\Pcal$ {\it pressure semi-norms}.

The degenerating vectors of the pressure semi-norms on the spaces of quasi-Fuchsian groups are characterized in \cite{Bridgeman_WPMetricQF}. However, the degenerating vectors of the pressure semi-norms on the spaces of Blaschke and quasi-Blaschke products remain unknown. In Section \ref{sec_DegeLoci_QB}, we investigate the degeneracy loci of pressure semi-norms on deformation spaces of quasi-Blaschke products.

\subsection*{Acknowledgements} The authors would like to thank Curt McMullen and Oleg Ivrii for useful conversations. H.\ L.\ was supported by an AMS-Simons Travel Grant. The authors would also like to thank the referee for a careful reading of the manuscript and for many useful comments.

\section{Pressure metrics in geometry and dynamics}\label{sec:Survey}
In this section, we survey results on pressure metrics in various deformation spaces of geometric structures and holomorphic dynamical systems in \cite{Bowen74,BCLS,HeNie_MetricHypComp,McMullen08}.
%We begin with discussing the pressure metric in the context of subshifts of finite type.

\subsection{Thermodynamic formalism}\label{sec:ThermodynFormalism}
In this subsection, we give a brief introduction to the pressure metric in the thermodynamic setting. Standard references are \cite{McMullen08, Parry90, Ruelle04}.

Fix an integer $n \ge 1$ and an $n\times n$ aperiodic matrix $A$ with entries equal to either $1$ or $0$. Recall that a matrix $A$ is {\it aperiodic} if there exists $k \in \mathbb N$ such that every entry of $A^k$ is positive. We define the {\it one-sided subshift of finite type} $(\Sigma_A^+,\sigma)$ as follows. We first define a set $\Sigma_A^+$ by
$$\Sigma_A^+ := \{ \underline{i} = (i_0, i_1, \ldots) ~|~  i_j \in \{1, \ldots, n \}, A_{i_j,i_{j+1}} = 1\}.$$
There is a standard metric $d_\Sigma$ on $\Sigma_A^+$ defined as 
$$d_\Sigma(\underline{x},\underline{\smash{y}}) := 2^{-N(\underline{x},\underline{\smash{y}})},$$ where $N(\underline{x},\underline{\smash{y}}) := \min\{j~|~ x_j \neq y_j \}$. 
With respect to {\color{black} the topology induced by} this metric, $\Sigma_A^+$ is a compact metric space.
We define the {\it shift map} $\sigma \from \Sigma_A^+ \to \Sigma_A^+$ as $$\sigma( i_0, i_1, i_2, \ldots) := (i_1, i_2, i_3, \ldots).$$ 

For $\alpha \in (0,1]$, a continuous function $\phi \from \Sigma_A^+ \to \mathbb{R}$ is {\it $\alpha$-H\"older continuous} if there exists a constant $C>0$ such that $$|\phi(\underline{x}) - \phi(\underline{\smash{y}})| \le C  d(\underline{x},\underline{\smash{y}})^{\alpha}$$ for any $\underline{x},\underline{\smash{y}} \in \Sigma_A^+$.
Denote by $C^{\alpha}(\Sigma_A^+)$ the space of $\alpha$-H\"older continuous real-valued functions on $\Sigma_A^+$.  We say that a continuous function $\phi \from \Sigma_A^+ \to \mathbb R$ is {\it H\"older continuous} if it is $\alpha$-H\"older continuous for some $\alpha \in (0,1]$. 

For $\phi \in C^{\alpha}(\Sigma_A^+)$, the {\it transfer operator} $\mathcal{L}_{\phi} \from C^{\alpha}(\Sigma_A^+) \to C^{\alpha}(\Sigma_A^+)$ is defined by 
$$\mathcal{L}_{\phi}(g)(y) := \sum_{\sigma(x) = y} e^{\phi(x)}  g(x).$$
%\textcolor{orange}{Note that $\mathcal{L}_{\phi}$ maps $\alpha$-H\"older continuous functions to $\alpha$-H\"older continuous functions since $\sigma$ is a subshift of finite type.}
\textcolor{black}{Even though the number of preimages of $\sigma$ may not be constant, $\mathcal{L}_{\phi}(g)$ is $\alpha$-H\"older continuous when $g$ is $\alpha$-H\"older continuous.}
By the Ruelle-Perron-Frobenius theorem, there is a positive eigenfunction $e^{\psi}$, unique up to scale, such that $$\mathcal{L}_{\phi}(e^{\psi}) = \rho(\mathcal{L}_{\phi}) e^{\psi}.$$
The spectral radius $\rho(\mathcal{L}_{\phi})$ is an isolated eigenvalue so that the rest of the spectrum is contained in a disk of radius $r < \rho(\mathcal{L}_{\phi})$.

The {\it pressure} $\mathcal{P}(\phi)$ of $\phi$ is defined by $$\mathcal{P}(\phi) := \log \rho(\mathcal{L}_{\phi}).$$
Alternatively, the pressure $\mathcal{P}(\phi)$ can also be defined using variational methods. Denote by $\mathcal{M}_\sigma$ the set of $\sigma$-invariant probability measures on $\Sigma_A^+$. Then, {\color{black} we have}
$$\mathcal{P}(\phi)  = \sup_{m \in \mathcal{M}_\sigma} \left(h_m(\sigma) + \int_{\Sigma_A^+} \phi \, dm \right),$$
where $h_m(\sigma)$ is the measure-theoretic entropy of $\sigma$ with respect to $m\in \mathcal{M}_\sigma$. A measure $m=m(\phi) \in \mathcal{M}_\sigma$ is called an {\it equilibrium state} or {\it equilibrium measure} of $\phi$ if
\[
    \mathcal{P}(\phi)  =h_m(\sigma) + \int_{\Sigma_A^+} \phi \, dm.
\]
It is well-known that every $\phi \in C^{\alpha}(\Sigma_A^+)$ has a unique equilibrium measure. Note that the equilibrium measure $m(\phi)$ is an ergodic $\sigma$-invariant probability measure with positive entropy; see \cite{ClimCyr}.% Note that for the unique equilibrium state, we have $h_{m}(\sigma)>0$; see \cite{ClimCyr}. 

The equilibrium measure $m(\phi)$ is also related to the spectral data of transfer operators {\color{black} described above}. If $\mathcal{P}(\phi) = 0$, then $\mathcal{L}_{\phi}(e^{\psi}) = e^{\psi}$. There is a positive measure $\mu = \mu(\phi)$ on $\Sigma_A^+$ that is uniquely determined as the eigenmeasure of the dual linear operator $\Lcal_\phi^* \from \mathcal{M}_\sigma \to \mathcal{M}_\sigma$ with eigenvalue one, i.e.,
$$\int_{\Sigma_A^+} \mathcal{L}_{\phi}(\tilde\phi) \, d\mu = \int_{\Sigma_A^+}\tilde\phi \, d\mu \hspace{20pt} \text{for~all~} \tilde\phi\in C^{\alpha}(\Sigma_A^+)$$
and $\int_{\Sigma_A^+} e^{\psi} \, d\mu =1$. We have
$$m(\phi) = e^{\psi} \mu(\phi).$$

%\todo[inline]{Both PP and McM call it just a variance. Any reason to call it an asymptotic variance?}
The {\it asymptotic variance} (which is called variance in \cite{McMullen08}) of a H\"older continuous function $\psi \from \Sigma_A^+ \to \mathbb R$ is given by 
$${\rm Var}(\psi, m(\phi)) = \lim_{n \to \infty} \frac{1}{n} \int_{\Sigma_A^+} \left( \sum_{i=0}^{n-1} \psi \circ \sigma^i(x) \right)^2 \, dm(\phi).$$

In what follows, we denote by $\dot{\phi_0}$ and $\ddot{\phi_0}$ the functions $\dot{\phi_0}\from \Sigma_A^+ \to \mathbb R$ and $\ddot{\phi_0}\from \Sigma_A^+ \to \mathbb R$ defined by $\dot{\phi_0}(z) := \left.\frac{d}{dt}\right|_{t=0}\phi_t(z)$ and $\ddot{\phi_0}(z) := \left.\frac{d^2}{dt^2}\right|_{t=0}\phi_t(z)$.

By using \cite[Propositions 4.10 and 4.11]{Parry90}, we obtain the following identities.

\begin{prop}[{\cite[Theorem 2.2]{McMullen08}}] \label{prop_thm2.2}
Fix a smooth path $\phi_t$ in $C^{\alpha}(\Sigma_A^+)$. Denote the equilibrium measure of $\phi_0$ by $m = m(\phi_0)$. Then we have
$$\frac{d\mathcal{P}(\phi_t)}{dt} \bigg|_{t=0} = \int_{\Sigma_A^+} \dot{\phi_0} \,dm,$$
and, if the first derivative of $\mathcal{P}(\phi_t)$ {\color{black} at $t=0$} is zero, then
$$\frac{d^2\mathcal{P}(\phi_t)}{dt^2} \bigg|_{t=0} = {\rm Var}(\dot{\phi_0}, m)  + \int_{\Sigma_A^+} \ddot{\phi_0} \,dm.$$
\end{prop}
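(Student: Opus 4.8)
The plan is to realize $\mathcal{P}(\phi_t)=\log\rho(\mathcal{L}_{\phi_t})$ as the logarithm of the leading eigenvalue of the transfer operator and to run analytic perturbation theory, exploiting the spectral gap recorded after the Ruelle--Perron--Frobenius theorem (the rest of the spectrum lies in a disk of radius $r<\rho(\mathcal{L}_{\phi_0})$). Since the top eigenvalue $\lambda_t:=\rho(\mathcal{L}_{\phi_t})$ is simple and isolated and $t\mapsto\mathcal{L}_{\phi_t}$ is a smooth family of bounded operators on $C^{\alpha}(\Sigma_A^+)$, Kato's perturbation theory makes the whole eigendata --- the eigenvalue $\lambda_t$, the positive eigenfunction $h_t:=e^{\psi_t}$, and the dual eigenmeasure $\mu_t=\mu(\phi_t)$ --- depend smoothly on $t$. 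The key computational input is that differentiating the operator is the same as multiplying inside it: from $\mathcal{L}_{\phi_t}(g)(y)=\sum_{\sigma(x)=y}e^{\phi_t(x)}g(x)$ one reads off $\dot{\mathcal{L}}(g)=\mathcal{L}_{\phi_0}(\dot{\phi_0}\,g)$ and $\ddot{\mathcal{L}}(g)=\mathcal{L}_{\phi_0}(((\dot{\phi_0})^2+\ddot{\phi_0})\,g)$ at $t=0$. After subtracting the constant $\mathcal{P}(\phi_0)$ from the entire family (which shifts the pressure by that constant and affects none of the quantities in the statement), I may assume $\mathcal{P}(\phi_0)=0$, so $\lambda_0=1$, $\mathcal{L}_0 h=h$, $\mathcal{L}_0^{*}\mu=\mu$, $\int h\,d\mu=1$, and $m=m(\phi_0)=h\mu$.

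For the first derivative I would differentiate $\mathcal{L}_{\phi_t}h_t=\lambda_t h_t$ at $t=0$ and pair against $\mu$. The identity $\mathcal{L}_0^{*}\mu=\mu$ cancels the $\mathcal{L}_0\dot h$ term against $\lambda_0\dot h$, leaving $\dot\lambda_0=\langle\mu,\dot{\mathcal{L}}h\rangle=\int\dot{\phi_0}\,h\,d\mu=\int\dot{\phi_0}\,dm$, which is the first identity since $\frac{d}{dt}\log\lambda_t|_0=\dot\lambda_0$.

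For the second derivative I assume $\dot\lambda_0=\int\dot{\phi_0}\,dm=0$, differentiate the eigenequation twice, pair against $\mu$, and again use $\mathcal{L}_0^{*}\mu=\mu$ to cancel the $\mathcal{L}_0\ddot h$ term; with $\dot\lambda_0=0$ this gives $\ddot\lambda_0=\langle\mu,\ddot{\mathcal{L}}h\rangle+2\langle\mu,\dot{\mathcal{L}}\dot h\rangle$. The first piece is immediately $\int((\dot{\phi_0})^2+\ddot{\phi_0})\,dm$. For the second I solve the first-order equation $(I-\mathcal{L}_0)\dot h=\mathcal{L}_0(\dot{\phi_0}h)$: its right-hand side has $\mu$-integral zero precisely because $\int\dot{\phi_0}\,dm=0$, hence lies in $\ker P$, the complement of the leading eigenspace on which $\mathcal{L}_0$ has spectral radius $<1$; normalizing by $\int\dot h\,d\mu=0$ yields the convergent series $\dot h=\sum_{k\ge 1}\mathcal{L}_0^{k}(\dot{\phi_0}h)$. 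Using the conformality identity $\int g\,\mathcal{L}_0^{k}(f)\,d\mu=\int (g\circ\sigma^{k})\,f\,d\mu$, the cross term becomes $2\sum_{k\ge 1}\int\dot{\phi_0}\,(\dot{\phi_0}\circ\sigma^{k})\,dm$. Since $\mathcal{P}''(\phi_t)|_0=\ddot\lambda_0$ here, assembling the pieces and invoking the Green--Kubo identity $\mathrm{Var}(f,m)=\int f^{2}\,dm+2\sum_{k\ge1}\int f\,(f\circ\sigma^{k})\,dm$ for the mean-zero observable $f=\dot{\phi_0}$ gives $\mathcal{P}''|_0=\int\ddot{\phi_0}\,dm+\mathrm{Var}(\dot{\phi_0},m)$.

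The hard part, and the place where I would lean on \cite[Propositions 4.10 and 4.11]{Parry90}, is the second-order analysis: justifying the smooth dependence of the eigendata, the convergence of the Neumann series $\sum_k\mathcal{L}_0^{k}$ defining the reduced resolvent, and the Green--Kubo summation. All three rest on the same mechanism, namely exponential decay of correlations, which is exactly the spectral gap $r<\rho(\mathcal{L}_{\phi_0})$. I would emphasize that the hypothesis $\int\dot{\phi_0}\,dm=0$ enters twice and is indispensable: it places the forcing term $\mathcal{L}_0(\dot{\phi_0}h)$ in $\ker P$ so that $\dot h$ exists, and it renders the asymptotic variance finite so that the Green--Kubo series converges to $\mathrm{Var}(\dot{\phi_0},m)$.
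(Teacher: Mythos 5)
Your proposal is correct and takes essentially the same route the paper relies on: the paper gives no independent argument but simply cites Parry--Pollicott (Propositions 4.10 and 4.11), whose proofs are exactly this analytic perturbation of the simple isolated leading eigenvalue of $\mathcal{L}_{\phi_t}$, pairing the differentiated eigenequation against the dual eigenmeasure, inverting $I-\mathcal{L}_0$ on the spectral complement via a Neumann series, and summing the resulting correlation series (Green--Kubo) to recognize $\mathrm{Var}(\dot{\phi_0},m)$. Your write-up is a sound self-contained reconstruction of that standard argument, including the correct observation that the normalization ambiguity in $\dot h$ (adding multiples of $h$) does not affect the cross term because $\int \dot{\phi_0}\,dm=0$.
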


%\subsection{The pressure metric} \label{subsec_pressure}
Two continuous functions $\phi_1$ and $\phi_2$ are {\it cohomologous}, denoted by $\phi_1 \sim \phi_2$, if there exists a continuous function $h \from \Sigma_A^+ \to \mathbb R$ such that $\phi_1(x) - \phi_2(x) = h(\sigma(x)) - h(x)$. 
The pressure function $\mathcal{P} \from C^{\alpha}(\Sigma_A^+) \to \mathbb R$ depends only on the cohomology class of $\phi$. Denote by $\mathcal{C}(\Sigma_A^+)$ the set of cohomology classes of H\"older continuous functions with zero pressure, that is,
$$\mathcal{C}(\Sigma_A^+) := \left\{\phi : \phi \in C^{\alpha}(\Sigma_A^+) \text{ for some } \alpha \in (0,1], \mathcal{P}(\phi) = 0 \right \}/ \sim.$$
%where $\phi_1 \sim \phi_2$ if $\phi_1$ and  $\phi_2$ are cohomologous.

For $[\phi] \in\mathcal{C}(\Sigma_A^+)$, denote by $m$ the equilibrium measure for a representative (hence, any representative) $\phi$ in $[\phi]$. The tangent space of $\mathcal{C}(\Sigma_A^+)$ at $[\phi]$ can be identified with 
$$T_{[\phi]}\mathcal{C}(\Sigma_A^+) = \left\{\psi ~\bigg|~\psi\ \text{is~H\"older continuous}, \int_{\Sigma_A^+} \psi \, dm = 0  \right\} / \sim.$$
{\color{black} The quotient on the right hand side of the above formula is well-defined as the pressure function depends only on the cohomology classes of H\"older continuous functions.}

%Given $[\psi] \in T_{[\phi]}\mathcal{C}(\Sigma_A^+)$, 
For a $[\psi] \in T_{[\phi]}\mathcal{C}(\Sigma_A^+)$, we define the {\it pressure norm} $||\cdot||_{pm}$ on $T_{[\phi]}\mathcal{C}(\Sigma_A^+)$ by
\begin{equation}\label{eq_def_pm}
    ||[\psi]||^2_{pm}:= \frac{{\rm Var}(\psi,m)}{-\int_{\Sigma_A^+} \phi \, dm}
\end{equation}
following \cite[p.375]{McMullen08}.
%as follows \cite[p. 375]{McMullen08}:
%\begin{equation}\label{eq_def_pm}
 %   ||[\psi]||^2_{pm}:= \frac{{\rm Var}(\psi,m)}{-\int_{\Sigma_A^+} \phi \, dm},
%\end{equation}
%for $[\psi] \in T_{[\phi]}\mathcal{C}(\Sigma_A^+)$. 
Note that the denominator of Equation \eqref{eq_def_pm} is positive as $0 = \mathcal{P}(\phi) = \int_{\Sigma_A^+} \phi \,dm + h_{m}(\sigma)$ and $h_{m}(\sigma)>0$.

The non-degeneracy of $||\cdot||_{pm}$ follows from the strict convexity of $\mathcal{P}$: The second derivative
$$\frac{d^2\mathcal{P}(\phi + t \psi)}{dt^2} \bigg|_{t=0} = {\rm Var}(\psi, m(\phi))$$ is non-negative {\color{black} for any $[\phi] \in \mathcal{C}(\Sigma_A^+)$ and any $[\psi] \in T_{[\phi]}\mathcal{C}(\Sigma_A^+)$}, and is zero if and only if $\psi$ is cohomologous to zero; see \cite[Proposition 4.12]{Parry90}. 

%{\color{red} Moreover, we can define a Riemannian metric $\langle\cdot,\cdot\rangle_{pm}$ on $\mathcal{C}(\Sigma_A^+)$ induced by the pressure norm. We refer to $\langle\cdot,\cdot\rangle_{pm}$ as the {\it pressure metric} on $\mathcal{C}(\Sigma_A^+)$. \todo{HL:please check this paragraph}

Moreover, we can define the {\it pressure form}  $\langle\cdot,\cdot\rangle_{pm}$ on $\mathcal{C}(\Sigma_A^+)$ by
\[
    \langle[\psi_1],[\psi_2]\rangle_{pm}:= \frac{{\rm Cov}(\psi_1,\psi_2,m)}{-\int_{\Sigma_A^+} \phi \, dm},
\]
where $[\psi_1],[\psi_2]\in T_{[\phi]}\mathcal{C}(\Sigma_A^+)$ and
$${\rm Cov}(\psi_1,\psi_2, m(\phi)) = \lim_{n \to \infty} \frac{1}{n} \int_{\Sigma_A^+} \left(\sum_{i=0}^{n-1} \psi_1 \circ \sigma^i(x)\right)\left(\sum_{i=0}^{n-1} \psi_2 \circ \sigma^i(x)\right)  \, dm(\phi).$$
Then we have $\langle[\psi],[\psi]\rangle_{pm}=||[\psi]||_{pm}^2$. %We refer to $\langle\cdot,\cdot\rangle_{pm}$ as the {\it pressure form} on $\mathcal{C}(\Sigma_A^+)$.

%{\color{blue} HL: With this formula, is $\langle[\psi],[\psi]\rangle_{pm}=||[\psi]||_{pm}^2$ true? Indeed, I think we can just mention that one can make it to Riemannian metric}

%{\color{blue} IS: I've changed the typo in the formula of Cov. $\langle[\psi],[\psi]\rangle_{pm}=||[\psi]||_{pm}^2$ is obvious now. This could be derived from $\left.\frac{d^2 \Pcal(\phi+t \psi_1+s\psi_2)}{d t d s}\right|_{t=0,s=0}$. I also don't want to add new notations, but I think it's necessary. In general, Finsler metrics may not be induced from Riemannian metrics. Moreover, if we'd like to use the term ``pressure form'', it should indicate $\langle\cdot,\cdot\rangle_{pm}$ not $\langle\cdot,\cdot\rangle_G$ when they are different.} 

As we will see in Section \ref{sec:McMullen}, the pressure form induced in the Teichm\"uller space is a constant multiple of the Weil-Petersson (Riemannian) metric. This provides a motivation to study pressure forms and their geometric properties in other moduli spaces of geometric structures or dynamical systems.

\subsection*{ Terminologies: pressure forms, semi-norms, and metrics} In what follows, for deformation spaces $\mathcal{D}$ in diverse contexts, we construct maps $\mathcal{E}\from \mathcal{D}\to \mathcal{C}(\Sigma_A^+)$. We define $\langle\cdot,\cdot\rangle_\Pcal:=\mathcal{E}^*\langle\cdot,\cdot\rangle_{pm}$ and $||\cdot ||_\Pcal:= \mathcal{E}^*||\cdot||_{pm}$, where $\mathcal{E}^*$ means the pullback via $\mathcal{E}$. If $\mathcal{E}$ is not an immersion, then the non-degeneracy is not preserved by $\mathcal{E}^*$. That is, in general $\langle\cdot,\cdot\rangle_\Pcal$ is a positive semi-definite symmetric bilinear 2-form and $||\cdot ||_\Pcal$ is a semi-norm. Hence, we will call $\langle\cdot,\cdot\rangle_\Pcal$ and $||\cdot ||_\Pcal$ the {\it pressure form} and the {\it pressure semi-norm}, respectively. We refer to the path pseudo-metric $d_\Pcal$ defined by
\[
    d_\Pcal(x,y):=\inf_\gamma \int_\gamma ||\gamma'||_\Pcal,
\]
where the infimum is taken over $C^1$-curves between $x$ and $y$, as the {\it pressure pseudo-metric} on $\mathcal{D}$.  It is worth noting that, even when $||\cdot ||_\Pcal$ has degenerating vectors, $d_\Pcal$ may be a metric, see \cite[Lemma 13.1]{BCLS}. By abuse of notation, when $\langle\cdot,\cdot\rangle_\Pcal$ is positive-definite, or, equivalently, $||\cdot ||_\Pcal$ is a norm, we also call them pressure metrics.

%\todo{HL:please check this paragraph}

%If $d_\Pcal(x,y)\neq0$ for any $x\neq y \in \mathcal{D}$, i.e., if $d_\Pcal$ is a metric, then we call $d_\Pcal$ a {\it pressure path metric} on $\mathcal{D}$.

%}{\color{black} HL:I liked the paragraph. The last line is a little bit confusing though. It is a little bit confusing. (If we do not call it "metric" then does it mean we don't know that it is metric or it is definitely not metric?). I feel that it is better to remove last sentence?}

\subsection{Pressure metrics on Teichm\"uller spaces and the space of Blaschke products}\label{sec:McMullen}
In \cite{McMullen08}, McMullen showed that the Weil-Petersson metric on the Teichm\"uller space $T(S)$ of a closed orientable genus $g \ge 2$ surface $S$ is equivalent to various quantities involving Hausdorff dimensions of dynamical {\color{black} objects}. From the perspective of Sullivan's dictionary, McMullen also established parallel results for the space $\mathcal{B}_d$ of degree-$d$ Blaschke products for $d\ge 2$ in complex dynamics. {\color{black} Recall that }{\it Blaschke products} are degree-$d$ proper holomorphic self-maps of the unit disk $\Delta$, and $\mathcal{B}_d$ is {\color{black} the quotient space of} the set of degree-$d$ Blaschke products by the conjugate action of $\Aut_{\mathbb{C}}(\Delta)$. %The space of fixedpoint-marked Blaschke products $\Bcal_d^{fm}$ we will discuss in Section \ref{sec_DegeLoci_QB} is a finite-degree branched covering of $\Bcal_d$. 
In this subsection, we survey the results of \cite{McMullen08}.
{\color{black} In particular, in Section \ref{sec_Curt_1}, we discuss the results on Teichm\"uller spaces, and in Section \ref{sec_Curt_2}, we discuss parallel results for Blaschke products.}

\subsubsection{Pressure metrics on Teichm\"uller spaces} \label{sec_Curt_1}
\subsection*{Teichm\"uller spaces and Weil-Petersson metrics}
Fix a closed orientable surface $S$ of genus $g \ge 2$. Consider a diffeomorphism $\varphi \from S \to Y$ where $Y$ is a surface with a complete hyperbolic metric. We call the pair $(Y,\varphi)$ a {\it marked hyperbolic surface} and the diffeomorphism $\varphi$ a {\it marking}. Two marked hyperbolic structures $\varphi_1 \from S \to Y_1$ and $\varphi_2 \from S \to Y_2$ are {\it homotopic} if there is an isometry $I\from Y_1 \to Y_2$ such that $I \circ \varphi_1 \from S \to Y_2$ and $\varphi_2 \from S \to Y_2$ are homotopic. The {\it Teichm\"uller space} $T(S)$ of $S$ is defined as the set of homotopy classes of marked hyperbolic surfaces.

There is an alternative definition of the Teichm\"uller space $T(S)$. Denote by $\pi_1 (S)$ the fundamental group of $S$. The orientation-preserving isometry group of the hyperbolic $2$-space $\mathbb H^2$ can be identified with the Lie group $\PSL(2,\mathbb R)$. {\color{black} Recall that} a representation (i.e., a group homomorphism) $\rho \from \pi_1 (S) \to \PSL(2, \mathbb R)$ is {\it discrete} if the image $\rho(\pi_1(S))$ is discrete in $\PSL(2, \mathbb R)$ and is {\it faithful} if it is injective. The group ${\rm PGL}(2,\mathbb R)$ acts on the space ${\rm DF}(\pi_1(S), \PSL(2, \mathbb R))$ of discrete faithful representations $\rho \from \pi_1 (S) \to \PSL(2, \mathbb R)$ by conjugation, i.e., for $h \in {\rm PGL}(2,\mathbb R), \gamma \in \pi_1(S)$, {\color{black} we have}
$$(h \cdot \rho)(\gamma) = h\cdot \rho(\gamma) \cdot h^{-1}.$$
The Teichm\"uller space $T(S)$ can be identified with the quotient
$$T(S) \cong {\rm DF}(\pi_1(S), \PSL(2, \mathbb R))\,/\,{\rm PGL}(2,\mathbb R).$$
A {\it Fuchsian group} is a discrete subgroup of $\PSL(2,\mathbb R)$. \textcolor{black}{Hence, for $[\rho] \in T(S)$, its image $\rho(\pi_1(S))$ is a Fuchsian group.}

{\color{black} Recall that $T(S)$ is a smooth manifold \cite{Bers_EmbBdd}.} \textcolor{black}{Fix} $X \in T(S)$ and {\color{black}  a smooth path $(X_t)_{t\in(-\epsilon,\epsilon)}$}  in $T(S)$  with $X_0 = X$. The tangent vector $\dot{X}_0:=\left.\frac{d}{dt}\right|_{t=0}X_t$ can be uniquely represented by a harmonic Beltrami differential
$$\dot{X}_0 = \beta^{-2}\overline{q}\textcolor{black}{,}$$
where $\beta$ is the hyperbolic metric of $X_0$ and $q$ is a holomorphic quadratic differential on $X_0$. In a local coordinate chart $z$, we can write $\beta = \beta(z)|dz|$, where $|dz| =|d(x+iy)|= \sqrt{dx^2+dy^2}$, and $q = q(z)dz^2$ so that $\bar{q} = \overline{q(z)}d\bar{z}^2$. Hence, we have $\dot{X}_0 = \frac{\overline{q(z)}d\bar{z}^2}{\beta^2(z)dzd\bar{z}}= \frac{\overline{q(z)}}{\beta^2(z)}\frac{d\bar{z}}{dz}$.

The {\it Weil-Petersson metric} on $T(S)$ is given by
$$||\dot{X}_0||^2_{WP} := \frac{\int_{X_0}\beta^2|\beta^{-2} \overline{q}|^2}{4\pi(g-1)}= \frac{\int_{X_0} \beta^{2}\beta^{-4}|q|^2}{4\pi(g-1)} = \frac{\int_{X_0} \beta^{-2}|q|^2}{4\pi(g-1)}.$$
In \cite{McMullen08}, McMullen proved that the Weil-Petersson metric is a constant multiple of the second derivative of the Hausdorff dimension function arising from {\it matings of Fuchsian groups}, which we discuss below.

\subsection*{Matings of Fuchsian groups}
Fix a smooth path $(X_t)_{t\in(-\epsilon,\epsilon)}$  in $T(S)$. {\color{black} We denote by $(G_t)$ the corresponding smooth family of Fuchsian groups such that $X_t = \Delta/G_t$ for every $t$, where the unit disk $\Delta$ is regarded as the hyperbolic plane $\mathbb H^2$.}
%The family $(X_t)$ of hyperbolic surfaces is a family of quotients of the unit disk $\Delta = \mathbb H^2$ by a smooth family of Fuchsian groups $(G_t)$, i.e., $X_t = \Delta/G_t.$

Let us identify the unit circle $\mathbb{S}^1$ with $\partial \mathbb H^2$. Then for each $t \in (-\epsilon,\epsilon)$, there is a unique quasi-symmetric homeomorphism $h_t \from \mathbb S^1 \to \mathbb S^1$ isotopic to the identity map such that $h_t$ conjugates the $G_0$-action to the $G_t$-action and satisfies $h_0(z) = z$.

\textcolor{black}{Let us denote by $1/\Delta$ the set $\{z \in \widehat{\mathbb C}: |z|>1\}$.} For each $t\in(-\epsilon,\epsilon)$, we glue $(\Delta,G_0)$ and $(1/\Delta,G_t)$ along their boundary circles via $h_t$ and obtain a {\it quasi-Fuchsian group} $\Gamma_t$. {\color{black} More precisely, for the quasi-symmetric map $h_t \from \mathbb S^1 \to \mathbb S^1$, there exists a quasi-circle $J_t$ in $\widehat{\mathbb{C}}$ whose boundary correspondence is $h_t$; see \cite[Theorem 3]{McM_simul}. We glue $(\Delta,G_0)$ and $(1/\Delta,G_t)$ via the uniformization map of the components of $\widehat{\mathbb{C}}\setminus J_t$, and obtain a Kleinian group $\Gamma_t$.}
\textcolor{black}{For all $t$,} the limit set $\Lambda(\Gamma_t)$ is a quasi-circle \textcolor{black}{  such that $\Lambda(\Gamma_0) = \mathbb S^1$}. We denote by ${\rm H.dim}(A)$ the Hausdorff dimension of a compact subset $A \subset \widehat{\mathbb C}$ with respect to the spherical metric. As a function of $t$, the Hausdorff dimension $\delta(\Gamma_t):={\rm H.dim}(\Lambda(\Gamma_t))$ is real-analytic and minimized at $t = 0$.

The Hausdorff dimension of a measure $\mu$ {\color{black} on $\mathbb{S}^1$} is defined by
\begin{equation}\label{eq:H.dim of Measure}
    \mathrm{H.dim}(\mu) := \inf \left\{ {\rm H.dim}(E) ~|~ \mu(\mathbb{S}^1 \setminus E)=0,~ E\subset \mathbb{S}^1\right\}.
\end{equation}
Denote by $m_{\rm leb}$ the Lebesgue probability measure on $\mathbb{S}^1$. We define $m_t:=(h_t)_*m_{\rm leb}$ as the push-forward of $m_{\rm leb}$ by the map $h_t$. As a function of $t$, the Hausdorff dimension ${\rm H.dim}(m_t)$ of $m_t$ is maximized at $t = 0$.

\begin{thm}[{\cite[Theorem 1.1]{McMullen08}}]\label{thm:McMullen1}
For a smooth family $(G_t)$ of Fuchsian groups, we have
\begin{align*}
\frac{d^2}{dt^2}\bigg|_{t=0} \delta(\Gamma_t) = -\frac{1}{4}\frac{d^2}{dt^2}\bigg|_{t=0} {\rm H.dim}(m_t) = \frac{1}{3}  \frac{||\dot{X}_0||^2_{WP}}{4\pi(g-1)},
\end{align*}
where $\dot{X}_0\in T_{X_0}(S)$ is the tangent vector at $t=0$ of the smooth path $(X_t=\Delta/G_t)$ in $T(S)$.
\end{thm}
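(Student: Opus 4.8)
The plan is to reduce the geometric statement about Hausdorff dimensions to the dynamical machinery of Proposition~\ref{prop_thm2.2} by encoding each Fuchsian group $G_t$ as a H\"older potential on a subshift, and then to compute both Hausdorff dimensions as the zeros of pressure functions of these potentials. The key tool is that a convex-cocompact (here cocompact) Fuchsian group admits a Markov coding of its boundary action: one fixes a symmetric generating set, builds a finite directed graph recording admissible reduced words, and thereby obtains an aperiodic transition matrix $A$ and a subshift $(\Sigma_A^+,\sigma)$ whose points parametrize geodesic rays landing on $\partial\mathbb H^2=\mathbb S^1$. Under this coding the geometric potential $\phi_t$, built from the derivative cocycle of the $\Gamma_t$-action on its limit set $\Lambda(\Gamma_t)$, is H\"older continuous, and Bowen's formula identifies the Hausdorff dimension $\delta(\Gamma_t)$ as the unique exponent for which the pressure of $-\delta\,\phi_t$ vanishes. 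So I would first recall this coding and write $\delta(\Gamma_t)$ implicitly via $\mathcal P(-\delta(\Gamma_t)\phi_t)=0$.

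Next I would differentiate the relation $\mathcal P(-\delta(\Gamma_t)\phi_t)=0$ twice in $t$. Since the limit set $\Lambda(\Gamma_0)=\mathbb S^1$ has dimension $1$, at $t=0$ one normalizes so that the potential has pressure zero and the equilibrium measure is (essentially) Lebesgue. The condition that $\delta(\Gamma_t)$ is minimized at $t=0$ forces the first derivative to vanish, which is exactly the hypothesis needed to apply the second-derivative formula in Proposition~\ref{prop_thm2.2}. Implicit differentiation of $\mathcal P(-\delta(t)\phi_t)=0$ then expresses $\delta''(0)$ as a ratio whose numerator is the asymptotic variance ${\rm Var}(\dot\phi_0,m)$ of the derivative of the potential and whose denominator is $-\int\phi_0\,dm$; this is precisely the pressure form of Section~\ref{sec:ThermodynFormalism} applied to the tangent vector represented by $\dot\phi_0$. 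The analogous computation for the pushed-forward Lebesgue measures $m_t=(h_t)_*m_{\rm leb}$ gives $\frac{d^2}{dt^2}{\rm H.dim}(m_t)$ as a constant multiple of the same variance, yielding the factor $-\tfrac14$ relating the two geometric derivatives.

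The final, and most delicate, step is to identify this dynamical variance with the Weil-Petersson norm $\|\dot X_0\|^2_{WP}$, up to the universal constant $\tfrac13\cdot\frac{1}{4\pi(g-1)}$. Here I would translate the infinitesimal deformation: the tangent vector $\dot X_0$ is represented by a harmonic Beltrami differential $\beta^{-2}\overline q$, and the conjugating family $h_t$ of boundary homeomorphisms has an infinitesimal generator (a vector field on $\mathbb S^1$) whose boundary distortion is governed by the same quadratic differential $q$. Computing $\dot\phi_0$ from $h_t$ and feeding it into the variance, one invokes the representation of the asymptotic variance as an $L^2$-pairing against the $\Gamma_0$-invariant density on the limit set, i.e.\ against hyperbolic area on $X_0=\Delta/G_0$. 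Matching this $L^2$-pairing to $\int_{X_0}\beta^{-2}|q|^2$ is where the explicit constant (and the $\tfrac13$, coming from the variance of the relevant Gaussian-type fluctuation) emerges; I expect this identification of the variance with the Weil-Petersson integral to be the main obstacle, as it requires carefully tracking how the H\"older coding, the equilibrium measure, and the harmonic representative all encode the same quadratic differential. Once the constants are matched, the chain of equalities in Theorem~\ref{thm:McMullen1} follows.
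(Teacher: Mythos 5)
Your first-equality argument follows the same route as the paper: Bowen's Markov coding of the boundary action, the pressure-zero characterization $\Pcal(-\delta(\Gamma_t)\phi_t)=0$, and implicit differentiation using Proposition \ref{prop_thm2.2}, which is exactly how Equations \eqref{eq_2.22}--\eqref{eq_2.24} of Section \ref{sec:McMullen} are assembled. Even here, though, your derivation of the factor $-\tfrac14$ is under-specified: in the paper this factor comes from the identity ${\rm Var}(\dot{\Phi}_0,m_0)=4\,{\rm Var}(\dot{\phi}_0,m_0)$ (Equation \eqref{eq_2.23}), which rests on introducing the \emph{two-parameter} mating family $F_{s,t}$ with the symmetry $F_{s,t}(\overline{z})=\overline{F_{t,s}(z)}$ of Equation \eqref{eqn:symm}; the second derivative of ${\rm H.dim}(m_t)$ is governed by the diagonal potential $\dot{\Phi}_0$, not by $\dot{\phi}_0$, and your phrase ``a constant multiple of the same variance'' is precisely the assertion that needs proof.

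The genuine gap is the last equality. Identifying ${\rm Var}(\dot{\phi}_0,m_0)/(-\int\phi_0\,dm_0)$ with $\tfrac13\,||\dot{X}_0||^2_{WP}/(4\pi(g-1))$ is the heart of McMullen's theorem, and it cannot be obtained by ``matching $L^2$-pairings'' in the way you describe: the asymptotic variance is a dynamical quantity on the symbol space, and relating it to $\int_{X_0}\beta^{-2}|q|^2$ requires either (i) McMullen's theory of holomorphic forms on the foliated unit tangent bundle and on Riemann surface laminations, which is what produces the intermediate limit formula of Theorem \ref{thm:McMullen2} and the explicit constant $\tfrac13$, or (ii) Wolpert's theorem expressing the Weil-Petersson metric through second derivatives of geodesic length functions, combined with the Bridgeman--Taylor comparison (Theorem \ref{thm:BridgemanTaylor}). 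Your attribution of the $\tfrac13$ to ``the variance of the relevant Gaussian-type fluctuation'' is not correct; it emerges from the analytic computation relating the holomorphic vector field $v$ of Equation \eqref{eq_hvf} to the quadratic differential $q$, not from a probabilistic normalization. The paper itself flags this step as the involved one and defers to these two routes, so a proposal that leaves it as ``the main obstacle'' has sketched the easy half and stopped exactly where the theorem begins.
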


\subsection*{Holomorphic vector fields}
Consider the quasi-Fuchsian group $\Gamma_t$ that is obtained by mating the Fuchsian groups $G_0$ and $G_t$ as described above. There is a smooth family of {\color{black} univalent} maps on the unit disk $H_t\from \Delta \to \widehat{\mathbb{C}}$ so that $\Gamma_t \circ H_t = H_t \circ \Gamma_0$. Recall that $\Gamma_0$ acts on the unit disk $\Delta \cong \mathbb H^2$.

For each fixed $z \in \Delta$, $\{H_t(z)\}_{t \in (-\epsilon,\epsilon)}$ is a smooth path in $\widehat{\mathbb C}$. Note that $H_0(z)=z$. Hence, the derivative $\frac{d}{dt}\big|_{t=0}H_t(z)$ is a tangent vector in $T_{z}\Delta$. Therefore
\begin{equation}\label{eq_hvf}
v(z) :=\left.\frac{d}{dt}\right|_{t=0}H_t(z)
\end{equation}
defines a holomorphic vector field on $\Delta$, which encodes the infinitesimal deformation of $(G_t)$ at $t=0$. Note that $H_t$ is a holomorphic conjugacy between the two dynamical systems $(\Delta,\Gamma_0)$ and $(H_t(\Delta), \Gamma_t)$, both of which are also holomorphically conjugate to $(\Delta, G_0)$.

{\color{black} For each fixed $z \in \Delta$, we identify the tangent space $T_z\Delta$ with $\mathbb C$. The vector field $v$ in Equation \eqref{eq_hvf} can also be regarded as a function $v \from \Delta \to \mathbb C$ given by $v(z)=\left.\frac{d}{dt}\right|_{t=0}H_t(z) \in \mathbb {C}$. Hence, we can consider its derivative $v'(z)$.
}

\begin{thm}[{\cite[Theorem 1.3]{McMullen08}}]\label{thm:McMullen2}
For a smooth family $(G_t)$ of Fuchsian groups, we have
\[
    \frac{d^2}{dt^2}\bigg|_{t=0} \delta(\Gamma_t) =\lim_{r\to1}\frac{1}{4\pi \log{|1-r|}} \int_{|z|=r} |v'(z)|^2\,|dz|.
\]
\end{thm}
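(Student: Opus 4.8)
The plan is to read off $\delta(\Gamma_t)$ from Bowen's formula and then differentiate twice using Proposition \ref{prop_thm2.2}, converting the resulting asymptotic variance into the boundary energy of $v'$. Fix a Bowen--Series Markov coding of the action of $G_0$ on $\mathbb{S}^1=\partial\mathbb{H}^2$, modeled by a subshift of finite type $(\Sigma_A^+,\sigma)$, and let $f_0$ denote the associated expanding boundary map. Because the maps $H_t$ intertwine the dynamics, the expanding map $f_t$ on the quasicircle $\Lambda(\Gamma_t)$ is topologically conjugate to $f_0$, and Bowen's formula characterizes $s(t):=\delta(\Gamma_t)$ as the unique solution of $\mathcal{P}(-s\,\varphi_t)=0$, where $\varphi_t:=\log|f_t'|\circ H_t$ is the geometric potential transported to the fixed base by $H_t$. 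I will differentiate the identity $\mathcal{P}(-s(t)\varphi_t)\equiv 0$; the constant that appears can be cross-checked against Theorem \ref{thm:McMullen1}, which evaluates the same second derivative through $\|\dot X_0\|^2_{WP}$.

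At $t=0$ the group $\Gamma_0=G_0$ is Fuchsian, so $\Lambda(\Gamma_0)=\mathbb{S}^1$, $s(0)=1$, and the equilibrium measure $m_0$ of $-\varphi_0$ is the Lebesgue-class measure. Since $\delta(\Gamma_t)$ is minimized at $t=0$ we have $\dot s(0)=0$, and the first-order part of Proposition \ref{prop_thm2.2} then gives $\int_{\Sigma_A^+}\dot\varphi_0\,dm_0=0$; this is exactly the hypothesis required to invoke the second-order formula. Differentiating $\mathcal{P}(-s(t)\varphi_t)\equiv 0$ twice and using $\dot s(0)=0$ yields
\[
    \ddot s(0)=\frac{{\rm Var}(\dot\varphi_0,m_0)-\int_{\Sigma_A^+}\ddot\varphi_0\,dm_0}{\int_{\Sigma_A^+}\varphi_0\,dm_0},
\]
whose denominator is the positive Lyapunov exponent $\lambda$ of the boundary map.

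The decisive point is the shape of $\dot\varphi_0$. Writing $f_t=H_t\circ f_0\circ H_t^{-1}$ and using the conformal chain rule for the boundary values of the univalent maps $H_t$, one gets $\varphi_t=\log|f_0'|+\big(\log|H_t'|\circ f_0-\log|H_t'|\big)$. Differentiating at $t=0$, where $H_0=\mathrm{id}$ and $\dot H_0=v$ so that $\tfrac{d}{dt}\big|_0\log|H_t'|=\mathrm{Re}\,v'$, gives the clean formula $\dot\varphi_0=\mathrm{Re}\,v'\circ f_0-\mathrm{Re}\,v'$. Thus $\dot\varphi_0$ is a \emph{formal} coboundary, but with transfer function $\mathrm{Re}\,v'$ that is \emph{not} bounded or H\"older: the identification $v'''\,dz^2=c\,q$ (obtained by applying $\partial_z^3$ to the equivariance relation $v(\gamma z)-\gamma'(z)v(z)=\dot\gamma(z)$, which annihilates the infinitesimally M\"obius cocycle $\dot\gamma$) shows $|v'''(z)|\lesssim(1-|z|)^{-2}$, hence $|v'(z)|\lesssim|\log(1-|z|)|$. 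Because the transfer function is inadmissible, $\dot\varphi_0$ is not cohomologous to zero and its asymptotic variance does \emph{not} vanish; instead the Birkhoff sum telescopes, $\sum_{i=0}^{n-1}\dot\varphi_0\circ f_0^{\,i}=\mathrm{Re}\,v'\circ f_0^{\,n}-\mathrm{Re}\,v'$, and this controlled divergence is precisely what produces the pressure semi-norm.

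The main obstacle is the final identification
\[
    \frac{{\rm Var}(\dot\varphi_0,m_0)-\int_{\Sigma_A^+}\ddot\varphi_0\,dm_0}{\lambda}=\lim_{r\to1}\frac{1}{4\pi\log|1-r|}\int_{|z|=r}|v'(z)|^2\,|dz|.
\]
Here one uses that $n$ iterations of $f_0$ push a generic boundary point to hyperbolic radius $\sim\lambda n$, i.e.\ to Euclidean radius $r$ with $n\sim\lambda^{-1}\log\tfrac{1}{1-r}$, and that the expanded orbits equidistribute with respect to Lebesgue measure $m_0$. Substituting the telescoped sum into ${\rm Var}$ and averaging $|\mathrm{Re}\,v'|^2$ over phases (which on average replaces it by $\tfrac12|v'|^2$, the source of the factor $4\pi$ rather than $2\pi$) shows that $\int_{|z|=r}|v'|^2\,|dz|$ diverges logarithmically in $r$, with leading coefficient equal, after dividing by $\lambda$, to $\ddot s(0)$; the normalization $\frac{1}{4\pi\log|1-r|}$ isolates exactly this coefficient. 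Making the equidistribution-to-radial-integral passage quantitative, controlling the error terms coming from the logarithmic growth of $v'$, and checking that the $\int_{\Sigma_A^+}\ddot\varphi_0\,dm_0$ contribution folds into the same energy integral, is the technical heart of the argument; I would verify the resulting constant against Theorem \ref{thm:McMullen1} using $\|\dot X_0\|^2_{WP}=\frac{1}{4\pi(g-1)}\int_{X_0}\beta^{-2}|q|^2$ together with the relation $v'''\,dz^2=c\,q$.
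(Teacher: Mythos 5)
Your thermodynamic setup is sound as far as it goes: Bowen's equation $\mathcal{P}(-s(t)\varphi_t)\equiv 0$, the vanishing of $\dot{s}(0)$, the second-derivative formula from Proposition \ref{prop_thm2.2}, and the coboundary identity $\dot{\varphi}_0=\mathrm{Re}\,v'\circ f_0-\mathrm{Re}\,v'$ are all correct. But this part only reproduces what the survey actually presents, namely the reduction of $\frac{d^2}{dt^2}\big|_{t=0}\delta(\Gamma_t)$ to a dynamical variance (the first equality of Theorem \ref{thm:McMullen1}, via Equations \eqref{eq_2.22}--\eqref{eq_2.24}). The entire content of Theorem \ref{thm:McMullen2} is the remaining identification
\[
\frac{\mathrm{Var}(\dot{\varphi}_0,m_0)-\int_{\Sigma_A^+}\ddot{\varphi}_0\,dm_0}{\int_{\Sigma_A^+}\varphi_0\,dm_0}
\;=\;\lim_{r\to1}\frac{1}{4\pi\log|1-r|}\int_{|z|=r}|v'(z)|^2\,|dz|,
\]
and at exactly this point your argument stops being a proof. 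The claims that "expanded orbits equidistribute," that phase-averaging replaces $|\mathrm{Re}\,v'|^2$ by $\tfrac12|v'|^2$, and that the error terms from the logarithmic growth of $v'$ can be controlled are assertions, not arguments, and none of them is routine. As you yourself note, the transfer function $\mathrm{Re}\,v'$ is not H\"older on the symbolic space (indeed $v'$ is in general unbounded, $|v'(z)|=O(\log\tfrac{1}{1-|z|})$, and need not extend continuously to $\mathbb{S}^1$), so the standard variance theory for H\"older observables does not apply to the telescoped Birkhoff sums; the correlation structure of $\mathrm{Re}\,v'\circ f_0^{\,n}$ against $\mathrm{Re}\,v'$ is precisely what determines the constant $\tfrac{1}{4\pi}$, and you never compute it. Likewise, the statement that the $\int\ddot{\varphi}_0\,dm_0$ term "folds into the same energy integral" is doing real work that is nowhere justified: in McMullen's actual result (his Theorem 2.7, quoted as Equation \eqref{eq_2.22} in the paper) no such term appears, and showing that it drops out requires using the holomorphic dependence of the family on $t$, which you never invoke.

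This missing step is not a technicality that quantitative equidistribution would patch; it is the theorem. The paper is explicit about this: it states that the proofs of the second equality in Theorem \ref{thm:McMullen1} and of Theorem \ref{thm:McMullen2} are "more involved" and rest on McMullen's theory of holomorphic forms on foliated unit tangent bundles and on Riemann surface laminations \cite{McMullen08}, which the survey deliberately does not reproduce. So the honest status of your proposal is: a correct reduction of the left-hand side to a variance (already the known first equality of Theorem \ref{thm:McMullen1}), followed by a heuristic for the right-hand side whose rigorous implementation would require either rebuilding that machinery of \cite{McMullen08} or supplying a genuinely new ergodic-theoretic treatment of coboundaries with unbounded, only boundary-a.e.-defined transfer functions. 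Neither is supplied.
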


The first equality in Theorem \ref{thm:McMullen1} %together with Theorem \ref{thm:McMullen3} 
follows from the thermodynamic formalism that will be discussed in the next paragraphs. The proofs of the second equality in Theorem \ref{thm:McMullen1} and Theorem \ref{thm:McMullen2} are more involved. McMullen developed a theory of holomorphic forms of foliated unit tangent bundles and of Riemann surface laminations, which we do not discuss in this article. The relationship between the pressure metric and the Weil-Petersson metric can also be obtained by Bridgeman-Taylor's work (see Theorem \ref{thm:BridgemanTaylor} below), which relies on a result by Wolpert \cite{Wolpert_WPmetricbyThurston}. As stated in \cite[Theorem 12.1]{McMullen08}, theorems in \cite{McMullen08} give an alternative proof of Wolpert's result.

\subsection*{Thermodynamic formalism for Fuchsian groups}
Fix a smooth path $(X_t)_{t \in (-\epsilon,\epsilon)}$ in $T(S)$. Denote by $(G_t)$ the smooth family of Fuchsian groups with $X_t = \Delta/G_t$ for each $t \in (-\epsilon,\epsilon)$.

By \cite{Bowen79_Markov}, to each $X_t$ there is an associated expanding Markov map $f_t \from \mathbb S^1 \to \mathbb S^1$, i.e., $\mathbb S^1$ is decomposed into finitely many arcs so that $f_t$ restricted to each arc is the action by some element of $G_t$ and maps each arc onto a union of arcs. Moreover, this (topological) decomposition {\color{black} can be chosen to be} preserved under the deformation along $(X_t)$. More precisely, there is a family of homeomorphisms $h_t \from \mathbb S^1 \to \mathbb S^1$ conjugating the dynamics of $f_0$ to the dynamics of $f_t$ so that $h_t$ also conjugates the $G_0$-action to the $G_t$-action on $\mathbb S^1$.

Gluing $(\Delta,G_s)$ to $(\Delta,G_t)$ using $h_t \circ h_s^{-1}$, we obtain a two-parameter family of quasi-Fuchsian groups $(\Gamma_{s,t})$ and the corresponding Markov maps
$(F_{s,t}\from \Lambda(\Gamma_{s,t}) \to \Lambda(\Gamma_{s,t}))$, which are piece-wisely defined by the restrictions of elements of $\Gamma_{s,t}$. The family $(F_{s,t})$ can be normalized so that $F_{t,t}(z)=f_t(z)$ and 
\begin{equation}\label{eqn:symm}
    F_{s,t}(\overline{z}) = \overline{F_{t,s}(z)}.
\end{equation}
Using the Markov partition and the theory of holomorphic motions, we have a smooth family of symbolic coding
$$\pi_{s,t} \from\Sigma_A^+ \to \Lambda(\Gamma_{s,t}).$$
Here, $\Sigma_A^+$ denotes the one-sided subshift of finite type corresponding to the expanding Markov maps $f_t$. {\color{black} The matrix $A$ is obtained via the Markov partition $\{U_j\}$ of $f_t$; namely $A_{ij}=1$ if $U_j \subset f_t(U_i)$ and $A_{ij}=0$ otherwise.}

Define $\phi_t \from \Sigma_A^+ \to \mathbb{R}$ and $\Phi_t \from \Sigma_A^+ \to \mathbb{R}$ by
$$\phi_t(\underline{x}) := -\log|F_{0,t}'(\pi_{0,t}(\underline{x}))|
\text { and } \Phi_t(\underline{x}) := -\log|F_{t,t}'(\pi_{t,t}(\underline{x}))|.$$
Denote by $\delta(s,t)$  the Hausdorff dimension of the limit set $\Lambda(\Gamma_{s,t})$. Note that we have $\Pcal(\delta(0,t) \phi_t) = 0$ and
$\Pcal(\delta(t,t)  \Phi_t) = 0$ by Bowen's result \cite{Bowen79_Markov}. Denote by $m_0 = m(\phi_0)$ the equilibrium measure of $\phi_0$.
%and denote by $m_{s,t}$ the push-forward of $m_0$ to $\Lambda(\Gamma_{s,t})$
Note that $m_0$  \textcolor{black}{is equivalent to the Lebesgue measure;  see \cite[Theorem 2.3]{McMullen08}. Recall that two measures $\mu$ and $\nu$ are equivalent if $\mu(A)=0\iff \nu(A)=0$ for any measurable subset $A$.} 
%the same measure class as the Lebesgue measure on the unit circle $\mathbb{S}^1$ {\color{black} (i.e., $m_0$ and the Lebesgue measure have the same set of measure zero)}; see \cite[Theorem 2.3]{McMullen08}.%{\color{black}(reference)}

\medskip

For any $X\in T(S)$, we have a Markov expanding map $f_X\from\mathbb{S}^1\to \mathbb{S}^1$. Moreover, the maps $f_X\from\mathbb{S}^1\to \mathbb{S}^1$ and $f_Y\from\mathbb{S}^1\to \mathbb{S}^1$ are conjugate for any $X,Y \in T(S)$. Therefore, {\color{black} the space} $\Sigma_A^+$ defined above \textcolor{black}{encodes} the (topological) dynamics of $f_X$ on $\mathbb{S}^1$ for any $X \in T(S)$. {\color{black} We denote by $\pi_X \from \Sigma_A^+ \to \mathbb S^1$ the projection map.} Define $\Phi_X \from \Sigma_A^+ \to \mathbb{R}$ by
\[
    \Phi_X(\underline{x}):= -\log |f'_X(\pi_X(\underline{x}))|.
\]
It turns out that $\Phi_X$ is H\"{o}lder continuous.
Then we define the thermodynamic map
$$\mathcal{E} \from T(S) \to \mathcal{C}(\Sigma^+_A)$$ given by
$$X \mapsto [\Phi_X].$$
We note that $\mathcal{P}(\Phi_X) = 0$ again by Bowen's result \cite{Bowen79_Markov}.
For a smooth family $(X_t)_{t\in(-\epsilon,\epsilon)}$, 
The pullback of the pressure norm $||\cdot||_{pm}$ on $\mathcal{C}(\Sigma_A^+)$ to $T(S)$ via $\mathcal{E}$ is given by
\begin{equation*}\label{eq_2.21}
\left|\left|\left.\frac{d}{d t}\right|_{t=0}X_t\right|\right|^2_\Pcal:=
     \frac{{\rm Var}(\dot{\Phi}_0, m_0)}{-\int \phi_0 \, dm_0}.
\end{equation*}

McMullen proved the equality
\begin{equation*}
 \left.\frac{d^2}{dt^2}\right|_{t=0} {\rm H.dim}~ \Lambda(\Gamma_{0,t}) = \frac{{\rm Var}(\dot{\phi}_0, m_0)}{-\int \phi_0 \, dm_0}
\end{equation*}
in \cite[Theorem 2.7]{McMullen08}. Since $\Gamma_{0,t}=\Gamma_t$, the above equation becomes
\begin{equation}\label{eq_2.22}
\frac{d^2}{dt^2}\bigg|_{t=0} \delta(\Gamma_t) = \frac{{\rm Var}(\dot{\phi}_0, m_0)}{-\int \phi_0 \, dm_0}.
\end{equation}
On the other hand, we have (see \cite[Proof of Theorem 2.6]{McMullen08})
\begin{equation}\label{eq_2.23}
    \frac{1}{4}\frac{{\rm Var}(\dot{\Phi}_0, m_0)}{-\int \phi_0 \, dm_0} = \frac{{\rm Var}(\dot{\phi}_0, m_0)}{-\int \phi_0 \, dm_0}
\end{equation}
and
\begin{equation}\label{eq_2.24}
\frac{d^2}{dt^2}\bigg|_{t=0} {\rm H.dim}(m_t) = -\frac{{\rm Var}(\dot{\Phi}_0, m_0)}{-\int \phi_0 \, dm_0}. 
\end{equation}
Therefore Equations (\ref{eq_2.22}), (\ref{eq_2.23}), and (\ref{eq_2.24}) give the first equality in Theorem \ref{thm:McMullen1}.

%\begin{thm}[{\cite[Theorem 1.12]{McMullen08}}]\label{thm:McMullen3}
%Let $(G_t)$ be a smooth family of Fuchsian groups, and $(\Gamma_t)$ be the smooth family of quasi-Fuchsian groups obtained by mating $G_0$ with $(G_t)$. Then we have
%\begin{align*}
%\frac{d^2}{dt^2}\bigg|_{t=0} \delta(\Gamma_t) = \frac{1}{4} \cdot \left|\left|\dot{X}_0 \right|\right|^2_\Pcal.
%\end{align*}
%where $\dot{X}_0\in T_{X_0}(S)$ is the tangent vector of the path $(X_t)$ in $T(S)$ at $t=0$.
%\end{thm}

\subsubsection{Pressure  forms and semi-norms on the spaces of Blaschke products} \label{sec_Curt_2}
Fix $d \ge 2$. Recall that $\mathcal{B}_d$ is the quotient space of degree-$d$ proper holomorphic self-maps of the unit disk $\Delta$ by the conjugate action of $\Aut_{\mathbb{C}}(\Delta)$.
Any $[f] \in \mathcal{B}_d$ can be represented by a finite Blaschke product
\begin{equation}\label{eq:blas}
    f(z) = z \prod_{i=1}^{d-1}\frac{z-a_i}{1-\overline{a_i}z},
\end{equation}
where the $a_i$'s are in $\Delta$.
The Julia set $\Julia(f)$ of $f$ is the unit circle $\mathbb S^1$, and the map $f\from \mathbb S^1 \to \mathbb S^1$ is expanding.

\subsection*{Matings of Blaschke products}
Fix a smooth path $(f_t)_{t\in(-\epsilon,\epsilon)}$ in $\mathcal{B}_d$. There is a unique isotopy $h_t\from \mathbb S^1 \to \mathbb S^1$ conjugating the dynamics of $f_0$ to $f_t$ on $\mathbb S^1$ and satisfying $h_0(z) = z$. Using $h_t$ to glue $(\Delta,f_0)$ and $(1/\Delta,f_t)$ along $\mathbb S^1$, we obtain a smooth family of rational maps
$$F_t \from \widehat{\mathbb C} \to \widehat{\mathbb C}.$$
The Julia set $\Julia(F_t)$ is a quasi-circle with $\Julia(F_0) = \mathbb S^1.$ Thus the Hausdorff dimension ${\rm H.dim}(\Julia(F_t))$ is minimized at $t = 0$.

We define $m_t :=(h_t)_*m_{\rm leb}$ as the push-forward of the Lebesgue probability measure $m_{\rm leb}$ on $\mathbb{S}^1$ by the map $h_t$. Then the Hausdorff dimension $\mathrm{H.dim}(m_t)$ of $m_t$ defined as \eqref{eq:H.dim of Measure} is maximized at $t=0$.

The definition of the holomorphic vector field $v$ on the unit disk $\Delta$ is the same as that for Fuchsian groups; see Equation \eqref{eq_hvf}. The thermodynamic formalism described in the previous subsection also applies to Blaschke products. For $\dot{f}_0:=\left.\frac{d}{d t}\right|_{t=0}f_t$, we define the {\it pressure  {\color{black} semi-norm} for Blaschke products} by
\[
    ||\dot{f}_0||^2_\Pcal:=\frac{{\rm Var}(\dot{\Phi}_0, m_0)}{-\int \phi_0 \, dm_0}.
\]

\begin{thm}[{\cite[Theorems 1.6, 1.7, and 2.6]{McMullen08}}]\label{thm:McMullen4}
For a smooth family of degree-$d$ Blaschke products $(f_t)$, we have
\[
    \frac{d^2}{dt^2}\bigg|_{t=0} \delta(F_t) =\lim_{r\to1}\frac{1}{4\pi \log{|1-r|}} \int_{|z|=r} |v'(z)|^2\,|dz| = -\frac{1}{4} \frac{d^2}{dt^2}\bigg|_{t=0} {\rm H.dim}(m_t)=\frac{1}{4} \left|\left| \dot{f}_0\right|\right|^2_\Pcal,
\]
where $\dot{f}_0$ is the tangent vector of the path $(f_t)$ in $\Bcal_d$ at $t=0$.
\end{thm}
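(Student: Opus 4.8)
The plan is to transport the proofs of Theorems \ref{thm:McMullen1} and \ref{thm:McMullen2} across Sullivan's dictionary, replacing Fuchsian groups by Blaschke products and limit sets by Julia sets; as already observed, the entire thermodynamic apparatus of Section \ref{sec:McMullen} carries over to this setting. First I would reconstruct the two-parameter mating family: gluing $(\Delta, f_0)$ to $(1/\Delta, f_t)$ by $h_t \circ h_s^{-1}$ produces rational maps $F_{s,t}\from \hCbb \to \hCbb$ with Markov codings $\pi_{s,t}\from \Sigma_A^+ \to \Julia(F_{s,t})$, normalized so that $F_{t,t} = f_t$ and the symmetry \eqref{eqn:symm} holds, with potentials $\phi_t = -\log|F_{0,t}' \circ \pi_{0,t}|$ and $\Phi_t = -\log|F_{t,t}' \circ \pi_{t,t}|$. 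Writing $\delta(s,t) = {\rm H.dim}(\Julia(F_{s,t}))$, so that $\delta(F_t) = \delta(0,t)$, I will match the four expressions in the statement, labelled $(A)$, $(B)$, $(C)$, $(D)$ from left to right, through the Blaschke analogues of \eqref{eq_2.22}, \eqref{eq_2.23}, and \eqref{eq_2.24}, together with Theorem \ref{thm:McMullen2}.

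The analogue of \eqref{eq_2.22}, namely $(A) = {\rm Var}(\dot\phi_0, m_0)/(-\int \phi_0\,dm_0)$, comes from implicit differentiation of Bowen's equation. By Bowen's theorem $\Pcal(\delta(0,t)\phi_t) \equiv 0$, and since $\Julia(F_0) = \Sbb^1$ we have $\delta(0,0) = 1$; because $\delta(F_t)$ is minimized at $t=0$, its first derivative vanishes there. Applying Proposition \ref{prop_thm2.2} to the path $\Psi_t := \delta(0,t)\phi_t$, whose pressure is identically zero so that the first-order hypothesis is automatic, and using $\delta(0,0) = 1$ together with the vanishing first derivative to simplify $\dot\Psi_0$ and $\ddot\Psi_0$, I obtain
\[
0 = {\rm Var}(\dot\phi_0, m_0) + \left(\left.\frac{d^2}{dt^2}\right|_{t=0}\!\delta(F_t)\right)\int \phi_0\,dm_0 + \int \ddot\phi_0\,dm_0.
\]
The correction term $\int \ddot\phi_0\,dm_0$ vanishes by the same argument used at the symmetric parameter $t=0$ in the Fuchsian case, and since $\int\phi_0\,dm_0 < 0$, rearranging gives \eqref{eq_2.22} and identifies $(A)$ as claimed.

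Next I would establish the factor-of-four identity \eqref{eq_2.23} and the measure identity \eqref{eq_2.24}. Writing $\Phi_t = \phi_{t,t}$ where $\phi_{s,t} = -\log|F_{s,t}' \circ \pi_{s,t}|$ and $\phi_t = \phi_{0,t}$, the chain rule gives $\dot\Phi_0 = \partial_s\phi_{s,t}|_{(0,0)} + \partial_t\phi_{s,t}|_{(0,0)}$ while $\dot\phi_0 = \partial_t\phi_{s,t}|_{(0,0)}$. The symmetry \eqref{eqn:symm} interchanges the two parameters via the reflection $z \mapsto \bar z$, which fixes $\Sbb^1$ and preserves $|F'|$; exactly as for \eqref{eq_2.23}, this forces $\partial_s\phi_{s,t}|_{(0,0)}$ to be cohomologous to $\partial_t\phi_{s,t}|_{(0,0)}$, so $\dot\Phi_0 \sim 2\dot\phi_0$. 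Since the asymptotic variance depends only on the cohomology class and is quadratic, ${\rm Var}(\dot\Phi_0, m_0) = 4\,{\rm Var}(\dot\phi_0, m_0)$. As the pressure semi-norm is defined by $\|\dot f_0\|_\Pcal^2 = {\rm Var}(\dot\Phi_0, m_0)/(-\int\phi_0\,dm_0)$, this gives $\tfrac14\|\dot f_0\|_\Pcal^2 = (A)$, that is $(A) = (D)$. The identity \eqref{eq_2.24} for ${\rm H.dim}(m_t)$, derived from Bowen's equation for $\Phi_t$ and the fact that $m_t = (h_t)_* m_{\rm leb}$ is maximized at $t=0$, then yields $(C) = \tfrac14{\rm Var}(\dot\Phi_0,m_0)/(-\int\phi_0\,dm_0) = (A)$.

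Finally, the middle equality $(A) = (B)$ with $\lim_{r\to1}\frac{1}{4\pi\log|1-r|}\int_{|z|=r}|v'(z)|^2\,|dz|$ is exactly the content of Theorem \ref{thm:McMullen2}, whose holomorphic vector field $v$ is defined identically here by \eqref{eq_hvf}; its proof is geometric rather than formal and transfers verbatim. I expect this step to be the main obstacle: unlike the thermodynamic identities above, it is not a consequence of Proposition \ref{prop_thm2.2} but rests on McMullen's theory of holomorphic forms on foliated unit tangent bundles and Riemann surface laminations. Two further points deserving care, though handled just as in the Fuchsian setting, are the vanishing of $\int\ddot\phi_0\,dm_0$ and the cohomology computation behind the factor of four; once these are in place, the four quantities $(A)$, $(B)$, $(C)$, $(D)$ all coincide.
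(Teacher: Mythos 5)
Your proposal is correct and takes essentially the same route as the paper: the paper disposes of this theorem with the single remark that ``proofs of the identities are analogous to those for Fuchsian groups,'' i.e.\ one transports the two-parameter mating family, the potentials $\phi_t,\Phi_t$, and Equations \eqref{eq_2.22}, \eqref{eq_2.23}, \eqref{eq_2.24} across Sullivan's dictionary, deferring the vector-field equality to McMullen's theory of holomorphic forms on foliated unit tangent bundles, exactly as you do. Your added details (implicit differentiation of Bowen's equation via Proposition \ref{prop_thm2.2}, and the cohomology argument $\dot\Phi_0\sim 2\dot\phi_0$ behind the factor of four) are faithful reconstructions of the cited steps in \cite{McMullen08} rather than a different method.
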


Proofs of the identities in Theorem \ref{thm:McMullen4} are analogous to those for Fuchsian groups. Motivated by Theorems \ref{thm:McMullen1} and \ref{thm:McMullen4}, McMullen defined the {\it Weil-Petersson semi-norm on $\Bcal_d$} by
\[
    ||\dot{f}_0||^2_{WP}:=\frac{d^2}{dt^2}\bigg|_{t=0} \delta(F_t).
\]
Recall that the semi-norm $||\cdot||_{WP}$ defines a path pseudo-metric $d_{WP}$, see Section \ref{sec:ThermodynFormalism}. Ivrii proved the pseudo-metric space $(\Bcal_2, d_{WP})$ is incomplete \cite{Ivrii}. We conjecture that $||\cdot||_{WP}$ is a norm and $d_{WP}$ is a metric.

%In \cite{McMullen08}, McMullen defined the above $||\cdot||_{WP}$. Recently, in \cite{Ivrii}, Ivrii showed that $||\cdot||_{WP}$ is incomplete on $\Bcal_2$.

%In the paragraph above, we refer to $||\cdot||_{WP}$ on $\Bcal_D$ as a metric, following its usage in \cite{McMullen08, Ivrii}. However, the non-degeneracy of $||\cdot||_{WP}$ on $\Bcal_d$ is not shown therein. Therefore, as for now, we state that $||\cdot||_{WP}$ is a semi-norm, which defines a path pseudo-metric $d_{WP}$ on $\Bcal_d$. We conjecture that they are non-degenerate. Hence, more precisely, Ivrii's theorem implies that the pseudo-metric $d_{WP}$ on $\Bcal_d$ is incomplete. \todo{HL:please check this paragraph}%{\color{black} HL: It looks good for me}

\subsection{Pressure forms on quasi-Fuchsian spaces}\label{sec:Bridgeman}
In this subsection, we survey the results in \cite{BridgemanTaylor,Bridgeman_WPMetricQF} on the pressure forms on quasi-Fuchsian spaces. {\color{black} In this case, the pressure forms are not positive definite. Theorem \ref{thm:BridgemanTaylor} gives a characterization of their degeneracy loci.}

Fix a closed orientable surface $S$ of genus $g\ge 2$. Denote by $T(S)$ the Teichm\"uller space of $S$. Fix a Fuchsian group $\Gamma_0$ that is isomorphic to $\pi_1 S$, i.e., $\Gamma_0:=\rho(\pi_1S)$ where $\rho \in T(S)$. 
The {\it quasi-Fuchsian space} $\mathcal{QF}(S)$ of $S$ is the set of equivalence classes $X = [(f ,\Gamma )]$ where $f\from  \partial_{\infty} \mathbb H^3 \to \partial_{\infty} \mathbb H^3$ is a quasi-conformal homeomorphism of the boundary of the hyperbolic $3$-space $\mathbb H^3$
conjugating $\Gamma_0$ to $\Gamma${\color{black}, i.e., such that} $\Gamma = f\circ \Gamma_0 \circ f^{-1}$. Then $\Gamma$ is a torsion-free Kleinian group, i.e., a discrete torsion-free subgroup of $\PSL(2,\mathbb C)$, which acts on $\mathbb H^3 \cup \partial_\infty \mathbb H^3$.
We say that $(f_1,\Gamma_1)$ and $(f_2, \Gamma_2)$ are {\it equivalent} if there exists a conformal automorphism $\alpha\from  \partial_{\infty} \mathbb H^3 \to \partial_{\infty} \mathbb H^3$ conjugating $\Gamma_1$ to $\Gamma_2${\color{black}, i.e., such that} $f_2 \circ \gamma \circ f_2^{-1} = (\alpha \circ f_1) \circ \gamma \circ (\alpha \circ f_1)^{-1}$ for any $\gamma\in\Gamma_0$. We refer the interested reader to \cite{Maskit} for the theory of Kleinian groups.

Identifying the orientation-preserving isometry group of  $\mathbb H^3$ with $\PSL(2,\mathbb C)$, we obtain a complex structure on $\mathcal{QF}(S)$ and the induced almost-complex structure $J \from T(\mathcal{QF}(S)) \to T(\mathcal{QF}(S))$ for the quasi-Fuchsian space $\mathcal{QF}(S)$. Here $T(\mathcal{QF}(S))$ denotes the tangent bundle of $\mathcal{QF}(S)$. By Bers simultaneous uniformization theorem \cite{Bers_SimultaneousUniform}, $\mathcal{QF}(S)$ is biholomorphic to $T(S) \times T(\overline{S})$
where $\overline{S}$ has the opposite orientation to $S$.
There is a natural diagonal embedding
$$i \from T(S) \to \mathcal{QF}(S) = T(S) \times T(\overline{S})$$
given by $i(X) = (X, \overline{X})$. The image $\mathcal{F}(S):=i(T(S))$ is the Fuchsian locus in $\mathcal{QF}(S)$ and diffeomorphic to $T(S)$.

Denote by $||\cdot ||_{WP}$ the Weil-Petersson metric on $T(S)$. %\todo{It was norm in the previous section}
We call
$
    \frac{2}{3\pi |\chi(S)|}||\cdot ||_{WP}
$
the {\it normalized Weil-Petersson metric} where $\chi(S)$ is the Euler characteristic of the surface $S$.

\subsection*{Thermodynamic formalism and pressure  {\color{black} forms} for quasi-Fuchsian manifolds}
In \cite{Bowen79_Markov}, Bowen established the thermodynamic formalism for quasi-Fuchsian groups. In particular, every quasi-Fuchsian group $\Gamma$ admits an {\it expanding Markov map} $f_{\Gamma} \from \Lambda(\Gamma) \to \Lambda(\Gamma)$ on its limit set $\Lambda(\Gamma)$. Recall that the limit set $\Lambda(\Gamma)$ of $\Gamma$ is defined as the set of accumulation points on $\partial_\infty \Hbb^3$ of the orbit $\Gamma \cdot a$ for $a\in \Hbb^3$. The limit set $\Lambda(\Gamma)$ is independent of the choice of $a\in \Hbb^3$.
Denote by $(\Sigma_A^+, \sigma)$ the associated subshift of finite type. Denote by $\pi_\Gamma \from \Sigma_A^+ \to \Lambda(\Gamma)$ the projection map. 

\textcolor{black}{Fix} $X_1 = [(f_1,\Gamma_1)] \in \mathcal{QF}(S)$.
If $f_1 \from \mathbb S^2 \to \mathbb S^2$ is a quasiconformal map conjugating $\Gamma$ and $\Gamma_1$, then the map $f_{\Gamma_1} \from \Lambda(\Gamma_1) \to \Lambda(\Gamma_1)$ given by
$$f_{\Gamma_1} = f_1 \circ f_{\Gamma} \circ f_1^{-1}$$ is an expanding Markov map for $\Gamma_1$ with the same symbolic coding $(\Sigma_A^+, \sigma)$. \textcolor{black}{$\pi_{\Gamma_{1}}$ and $\pi_{\Gamma}$ satisfy $\pi_{\Gamma_1} = f_1 \circ \pi_{\Gamma}$.}

Define $\phi_{\Gamma} \from \Sigma_A^+ \to \mathbb R$ by
$$\phi_{\Gamma}(\underline{x}) := - \log |f_{\Gamma}'(\pi_{\Gamma}(\underline{x}))|.$$
Then $\phi_{\Gamma}$ is H\"older continuous. 
If $[(f_1,\Gamma_1)] = [(f_2,\Gamma_2)]$, then $\phi_{\Gamma_1}$ is cohomologous to $\phi_{\Gamma_2}$. Also by Bowen's result \cite{Bowen79_Markov}, we have
$\Pcal(\delta(\Gamma_1) \phi_{\Gamma_1}) = 0.$
%\todo[inline]{I think the Bowen's paper that should be cited above (twice) is \cite{Bowen79_Markov}.}
Therefore the map
$\mathcal{E} \from \mathcal{QF}(S) \to \mathcal{C}(\Sigma_A^+)$ given by
$$X \mapsto [\delta(X) \phi_{X}]$$
is well-defined.

Given $X \in \mathcal{QF}(S)$ and $v \in T_X\mathcal{QF}(S)$, choose a smooth curve $\alpha \from(-\epsilon,\epsilon) \to \mathcal{QF}(S)$ with $\alpha(0) = X$ and $\left.\frac{d}{dt}\right|_{t=0}\alpha(t) = v$. Then the pullback of the pressure norm is given by
$$||v||^2_{\Pcal} := \left|\left| \left.\frac{d}{dt}\right|_{t=0} \mathcal{E}(\alpha(t)) \right|\right|^2_{pm} = \frac{\mathrm{Var}(\psi,m(\phi))}{-\int \phi\, dm(\phi)}$$
where $\phi = \mathcal{E}(\alpha(0))$, $\psi = \left.\frac{d}{dt}\right|_{t=0} \mathcal{E}(\alpha(t))$, and $m(\phi)$ is the equilibrium measure for $\phi$. {\color{black}  Since the pullback may not preserve the non-degeneracy, $||\cdot ||_\Pcal$ is a semi-norm in general. We call $||\cdot ||_\Pcal$ the {\it pressure semi-norm} on $T_X \QF(S)$. Similarly, by pulling back $\langle\cdot,\cdot\rangle_{pm}$, we obtain a positive semi-definite symmetric bilinear 2-form $\langle\cdot,\cdot\rangle_\Pcal$ on $\QF(S)$. We call it the {\it pressure form} on $\QF(S)$.}%\todo{HL:please check this paragraph}

In \cite{BridgemanTaylor, Bridgeman_WPMetricQF}, Bridgeman and Taylor also constructed another  {\color{black}semi-norm} $||\cdot||_G$ by using the so-called length functions that we introduce in the subsequent paragraphs. The semi-norms $||\cdot||_G$ and $||\cdot||_\Pcal$ are conformally equivalent; see Theorem \ref{thm:BridgemanTaylor}.

\subsection*{Patterson-Sullivan geodesic currents and length functions}%{\color{black}(Remove it)This subsection has been revised. Please review it.}
%The {\it limit set} $\Lambda(\Gamma)$ of $\Gamma$ is the set of accumulation points on $\partial_{\infty}\mathbb H^3$ of the orbit $\Gamma \cdot a$ where $a \in \mathbb H^3$. 
Denote by
\[
    \mathcal{G}(\mathbb H^3) \cong \left(\partial_\infty \mathbb H^3 \times \partial_\infty \mathbb H^3 \setminus ({\rm diagonal})\right)/\mathbb Z_2
\]
the space of unoriented geodesics in $\mathbb H^3$. Here $\mathbb Z_2$ acts by exchanging of two coordinates of $\partial_\infty \mathbb H^3 \times \partial_\infty \mathbb H^3$, 
i.e., $(a,b)\mapsto (b,a)$ for $a,b\in \partial_\infty \mathbb H^3$.

Suppose that $\Gamma$ is a Kleinian group.  A {\it geodesic current} for $\Gamma$ is a positive measure on $\mathcal{G}(\mathbb H^3)$ that is invariant under the action of $\Gamma$ and supported on the set of geodesics whose endpoints are in the limit set $\Lambda(\Gamma)$.

For $s > 0$, the {\it Poincar\'e series} of a Kleinian group $\Gamma$ is defined by
$$g_s(x,y) := \sum_{\gamma \in \Gamma} e^{-s\cdot d(x,\gamma \cdot y)},$$
where $x,y \in \mathbb H^3$ and $d(\cdot,\cdot)$ is the hyperbolic distance on $\mathbb H^3$. The {\it critical exponent} $\delta_\Gamma$ is defined as
$$\delta_\Gamma := \inf \{s~|~ g_s(x,y) < \infty\}$$
and is independent of the choices of $x$ and $y$.

Using the Poincar\'e series, \textcolor{black}{a} {\it Patterson-Sullivan measure} is constructed as follows. For $x,y \in \mathbb H^3$ and $s > \delta_\Gamma$, we define a measure $\mu_{x,s}$ supported on the orbit of $y$ by
$$\mu_{x,s} = \frac{1}{g_s(y,y)} \sum_{\gamma \in \Gamma} e^{-s\cdot d(x,\gamma \cdot y)} \cdot  \delta_{\gamma \cdot y}$$
where $\delta_p$ is the Dirac mass at the point $p\in \mathbb H^3$. \textcolor{black}{A} {\it Patterson-Sullivan measure} $\mu_x$ is a weak$^*$-limit of the measures $\mu_{x,s}$ as $s\searrow \delta_{\Gamma}$.  
Then we have ${\rm supp}(\mu_x)\subset \Lambda(\Gamma)$. Moreover, $\mu_x$ is a {\it conformal density of dimension $\delta_\Gamma$}, i.e.,
\[
    \mu_x(\gamma E)=\int_E |\gamma'|^{\delta_\Gamma} d\mu_x
\]
for every Borel set $E\subset \partial_\infty \Hbb^3$ and $\gamma \in \Gamma$.
% $\gamma_*(\mu_x)=|\gamma'|^{\delta_\Gamma}\mu_x$. {\color{black}what is the last equation for?}

We define a measure $\tilde{m}$ on $\left(\partial_\infty \mathbb H^3 \times\partial_\infty \mathbb H^3 \setminus ({\rm diagonal})\right)$ by
$$d\tilde{m}(a,b) = \frac{d\mu_x(a)d\mu_x(b)}{|b-a|^{2\delta_\Gamma}}.$$
Define a projection map $\pi \from \left(\partial_\infty \mathbb H^3 \times\partial_\infty \mathbb H^3 \setminus ({\rm diagonal})\right) \to \mathcal{G}(\mathbb H^3)$ sending two distinct points on $\partial_\infty \mathbb H^3$ to the unoriented geodesic in $\mathbb H^3$ connecting them. Then the measure $m := \pi_\ast(\tilde{m})$ is $\Gamma$-invariant and supported on $\left(\Lambda(\Gamma) \times \Lambda(\Gamma) \setminus ({\rm diagonal}) \right)/\mathbb Z_2$. Hence $m$ is a geodesic current, and we call it  \textcolor{black}{a} {\it Patterson-Sullivan geodesic current} for $\Gamma$.

If $\Gamma$ is geometrically finite {\color{black}(i.e., $\Gamma$ has a polyhedral fundamental domain with finitely many sides)}, then $\mu_x$ is independent of $x$ and unique up to constant multiple \cite{Sullivan84}. Hence, $\tilde{m}$ and $m$ are also unique up to constant multiple. In this case, we call $m$ the {\it unit length Patterson-Sullivan geodesic current} if it is a probability measure.

Suppose that $\Gamma$ is a convex cocompact Kleinian group. Given $\gamma \in \Gamma$, the {\it length of $\gamma$ with respect to $\Gamma$} is its translation length, namely, it is equal to
{\color{black} $2\log |\lambda|$ where $\lambda$ and $1/\lambda$ are the eigenvalues of the (loxodromic) matrix $\gamma$ in $\PSL(2,\mathbb C)$.}
We note that the translation length of $\gamma$ is also equal to the hyperbolic length of the closed geodesic that $\gamma$ represents.
The notion of length for $\gamma$ (or closed geodesics) can be extended to the notion of {\it length of a geodesic current with respect to $\Gamma$}; see \cite[Proposition 4.5]{Bonahon_EndsHyp3Mfd} and \cite[Proposition 14]{Bonahon_Currents}. 

For a convex cocompact Kleinian group $\Gamma$, we define the space $\mathcal{QC}(\Gamma)$ of quasi-conformal deformations of $\Gamma$ as the set of equivalence classes $X=[(f_X,\Gamma_X)]$ with $\Gamma_X=f_X\circ \Gamma \circ f_X^{-1}$. 
Given a geodesic current $\alpha$ for $\Gamma$, we define the length function $L_\alpha \from \mathcal{QC}(\Gamma) \to \mathbb R$ by sending $[(f_Y,\Gamma_Y)]$ to the length of the geodesic current $(f_Y)_*\alpha$ with respect to $\Gamma_Y$. We refer the reader to \cite[Section 3]{BridgemanTaylor} for details.

\subsection*{Symmetric bilinearsa 2-form $\langle\cdot,\cdot\rangle_G$}
Fix $X_0 \in \QF(S)$. Denote by $\mu_0$ the unit length Patterson-Sullivan geodesic current of $X_0$. 
We define $G_{X_0} \from \QF(S) \to \Rbb$ by
    \[
        [f_Y,\Gamma_Y] \mapsto \delta([(f_Y,\Gamma_Y)]) L_{\mu_0}([f_Y,\Gamma_Y]),
    \]
where $\delta \from \QF(S) \to [0,2]$ is the function sending $[(f,\Gamma)]$ to the Hausdorff dimension of its limit set $\Lambda(\Gamma)$ and $L_{\mu_0}([f_Y,\Gamma_Y])$ is the length of $(f_Y)_*\mu_0$ with respect to $\Gamma_Y$. 
     
%Here,  is a function that sends $X$ to the Hausdorff dimension of its limit set $\Lambda(X)$ and $\pi_X:\Sigma_A^+ \to \Lambda(X)$ is the semi-conjugacy of symbolic coding of the expanding Markov map $f_X$.

%Using these functions, in , Bridgeman and Taylor introduced metrics $||\cdot ||_G$ and $||\cdot||_\Pcal$ on the quasi-Fuchsian space and showed the following theorem.

\begin{thm}[{\cite{BridgemanTaylor,Bridgeman_WPMetricQF}}]\label{thm:BridgemanTaylor}
Fix a closed orientable surface $S$ of genus $g\ge2$.
\begin{enumerate}
    \item %The map $G$ is $(0,\omega)$-differentiable, and 
    The map $G_{X_0}\from \QF(S) \to \Rbb$ is real-analytic and has a unique global minimum at $X_0$. Hence the Hessian of $G_{X_0}$ defines a \textcolor{black}{positive semi-definite symmetric bilinear $2$-form $\langle\cdot,\cdot\rangle_G$.}
    \item %The metric $||\cdot||_G$ on $\QF(S)$ extends the normalized Weil-Petersson metric on $\mathcal{F}(S)$.
    The 2-form $\langle\cdot,\cdot\rangle_G$ restricted to the Fuchsian locus $\mathcal{F}(S)$ is equal to the normalized Weil-Petersson metric. More precisely, for any $v,w \in T_{X_0}(\mathcal{F}(S)) \subset T_{X_0}(\mathcal{QF}(S))$, we have
    \begin{equation}\label{eq_WPext}
        \langle v,w \rangle_G = \frac{2}{3\pi |\chi(S)|}\langle v,w \rangle_{WP} .
    \end{equation}
    \item For any $X_0\in \QF(S)$ and $v\in T_{X_0}(\QF(S))$, we have
    \[
        || v ||_G=\sqrt{\delta(X_0)} || v ||_\Pcal.
    \]
    %\cite[Theorem 10]{Bridgeman_WPMetricQF}.
    %\todo[inline]{Similar to the note on page 13. We might want to define $\left<\cdot,\cdot\right>_G$ in (1), not $\|\cdot \|_G$.}
    \item For $X_0\in \QF(S)$ and $v\in T_{X_0}(\QF(S))$, we have $||v||_G=0$ if and only if
    \begin{enumerate}
        \item $X_0\in \mathcal{F}(S)$, and
        \item $v \in J\cdot T_{X_0}(\mathcal{F}(S))$ where $J$ is the almost complex structure on $\QF(S)$.
    \end{enumerate}
\end{enumerate}
\end{thm}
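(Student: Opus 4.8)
\section*{Proof proposal}

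The plan is to route all four parts through the thermodynamic map $\mathcal{E}\from\QF(S)\to\mathcal{C}(\Sigma_A^+)$, $X\mapsto[\delta(X)\phi_X]$, together with the variance formulas of Proposition \ref{prop_thm2.2}. Write $\Psi_X:=\delta(X)\phi_X$, so that $\mathcal{P}(\Psi_X)=0$ for every $X$ and $m_0=m(\Psi_{X_0})$. The crux for Part (1) is the identity
$$G_{X_0}(X)=\delta(X_0)\,\frac{-\int_{\Sigma_A^+}\Psi_X\,dm_0}{-\int_{\Sigma_A^+}\Psi_{X_0}\,dm_0},$$
which holds because the length $L_{\mu_0}(X)$ of the pushed-forward Patterson--Sullivan current is, after the normalization $L_{\mu_0}(X_0)=1$ built into the unit length current, the integral of $-\phi_X$ against $m_0$; in particular $G_{X_0}(X_0)=\delta(X_0)$. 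Real-analyticity is then inherited from the real-analytic dependence of $\delta(X)$ and $\phi_X$ on $X$ via Ruelle's analyticity of pressure. For the global minimum I would use the variational principle: since $\mathcal{P}(\Psi_X)=0$ one has $h_{m_0}(\sigma)+\int\Psi_X\,dm_0\le 0$, whereas $m_0$ being the equilibrium state of $\Psi_{X_0}$ gives $h_{m_0}(\sigma)+\int\Psi_{X_0}\,dm_0=0$; comparing yields $-\int\Psi_X\,dm_0\ge-\int\Psi_{X_0}\,dm_0$, i.e.\ $G_{X_0}(X)\ge\delta(X_0)=G_{X_0}(X_0)$. Equality forces $m_0$ to be the equilibrium state of $\Psi_X$ as well, hence $\Psi_X\sim\Psi_{X_0}$ and, by marked-length-spectrum rigidity (injectivity of $\mathcal{E}$), $X=X_0$. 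The Hessian at this minimum is automatically positive semi-definite, defining $\langle\cdot,\cdot\rangle_G$.

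Part (3) is then a direct Hessian computation at the critical point $X_0$. Along a smooth path $X_t$ with $\dot X_0=v$, the displayed identity gives $G_{X_0}(X_t)=\delta(X_0)\,(-\int\Psi_{X_t}\,dm_0)/(-\int\Psi_{X_0}\,dm_0)$, so differentiating twice and using $\int\dot\Psi_0\,dm_0=0$ (the vanishing first variation of $\mathcal{P}(\Psi_{X_t})\equiv 0$) together with the second-order formula $0=\mathrm{Var}(\dot\Psi_0,m_0)+\int\ddot\Psi_0\,dm_0$ of Proposition \ref{prop_thm2.2}, I obtain
$$\frac{d^2}{dt^2}\bigg|_{t=0}G_{X_0}(X_t)=\delta(X_0)\,\frac{\mathrm{Var}(\dot\Psi_0,m_0)}{-\int_{\Sigma_A^+}\Psi_{X_0}\,dm_0}=\delta(X_0)\,||v||_\Pcal^2.$$
Since $X_0$ is a critical point, this second derivative is precisely the Hessian quadratic form $||v||_G^2$, giving $||v||_G=\sqrt{\delta(X_0)}\,||v||_\Pcal$. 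For Part (2) I then restrict to $X_0\in\mathcal{F}(S)$ and $v\in T_{X_0}\mathcal{F}(S)$, where $\delta(X_0)=1$, so Part (3) reduces the claim to identifying $||v||_\Pcal$ with the normalized Weil--Petersson norm. A path tangent to the Fuchsian locus is exactly the diagonal (genuine Teichm\"uller) deformation whose coding potential is McMullen's $\Phi_t$, and $\dot\delta(0)=0$ on $\mathcal{F}(S)$, so $\dot\Psi_0=\dot\Phi_0$ and $-\int\Psi_{X_0}\,dm_0=-\int\phi_0\,dm_0$. Hence $||v||_\Pcal^2=\mathrm{Var}(\dot\Phi_0,m_0)/(-\int\phi_0\,dm_0)$, and combining the variance relation \eqref{eq_2.23} with \eqref{eq_2.22} and Theorem \ref{thm:McMullen1} gives $||v||_\Pcal^2=4\frac{d^2}{dt^2}|_{t=0}\delta(\Gamma_t)=\frac{2}{3\pi|\chi(S)|}||v||_{WP}^2$, which is the assertion after polarization.

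For Part (4), Part (3) and $\delta(X_0)>0$ reduce $||v||_G=0$ to $||v||_\Pcal=0$, i.e.\ by the strict convexity of \cite[Proposition 4.12]{Parry90} to $\dot\Psi_0\sim 0$. By the Liv\v sic theorem this is equivalent to the vanishing of the first variation of every periodic Birkhoff sum; since the Birkhoff sum of $\Psi_{X_t}$ over the orbit coding $\gamma$ equals $-\delta(X_t)L_\gamma(X_t)$, where $L_\gamma$ is the marked-length-spectrum (translation-length) function, the condition becomes $\frac{d}{dt}|_{t=0}\bigl(\delta(X_t)L_\gamma(X_t)\bigr)=0$ for all $\gamma$. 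I would prove the ``if'' direction first: when $X_0\in\mathcal{F}(S)$ and $v=Jw$ with $w\in T_{X_0}\mathcal{F}(S)$, the Fuchsian locus is the global minimum locus of $\delta$ on $\QF(S)$ (round circles minimize Hausdorff dimension), so $\dot\delta(0)=0$; and since the complex length $\mathcal{L}_\gamma$ is holomorphic on $\QF(S)\cong T(S)\times T(\overline S)$ with $\mathrm{Im}\,\mathcal{L}_\gamma\equiv 0$ on $\mathcal{F}(S)$, the Cauchy--Riemann equations give $dL_\gamma(Jw)=-d(\mathrm{Im}\,\mathcal{L}_\gamma)(w)=0$, so every term vanishes. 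For the Fuchsian case of ``only if'', I decompose $v=v_1+Jv_2$ with $v_i\in T_{X_0}\mathcal{F}(S)$; the computation above kills the $Jv_2$ contribution and leaves $dL_\gamma(v_1)=0$ for all $\gamma$, whence the classical fact that the Teichm\"uller length differentials $\{dL_\gamma\}$ separate tangent vectors forces $v_1=0$ and $v\in J\,T_{X_0}\mathcal{F}(S)$.

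The main obstacle is the remaining implication: that there are no degenerate vectors off the Fuchsian locus, i.e.\ if $X_0\notin\mathcal{F}(S)$ then $\dot\Psi_0\sim 0$ forces $v=0$. Here $\dot\delta(0)$ need not vanish, and the condition reads $\mathrm{Re}\,d\mathcal{L}_\gamma(v)=-(\dot\delta(0)/\delta(0))\,L_\gamma(X_0)$ for every $\gamma$; ruling out a nonzero solution amounts to showing that $\mathcal{E}$ is an immersion away from $\mathcal{F}(S)$. This infinitesimal rigidity of the renormalized length spectrum is the genuinely hard input, and it is where I would invoke the analysis of \cite{Bridgeman_WPMetricQF}: unlike the Fuchsian case, it cannot be reduced to the Cauchy--Riemann trick, because the totally real symmetry that trivializes the bending directions is no longer available once $X_0$ leaves the Fuchsian locus.
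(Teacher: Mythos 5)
The paper itself contains no proof of Theorem \ref{thm:BridgemanTaylor}: it is stated as a survey result imported from \cite{BridgemanTaylor,Bridgeman_WPMetricQF}, so the only internal benchmark is the proof of Theorem \ref{thm_main}, which adapts the same circle of ideas to quasi-Blaschke products. Measured against that benchmark and against the cited proofs, most of your proposal is sound and follows the standard route. Part (1) via the variational principle, uniqueness of equilibrium states, and length-spectrum rigidity is the Bridgeman--Taylor argument; part (3) as a second-derivative computation at the critical point using Proposition \ref{prop_thm2.2} is correct (at a critical point the second derivative along a path depends only on the velocity, so it does compute the Hessian form); part (2) by restriction to $\mathcal{F}(S)$, where $\delta\equiv 1$ and $\dot\Psi_0=\dot\Phi_0$, combined with Theorem \ref{thm:McMullen1} and Equations \eqref{eq_2.22}--\eqref{eq_2.23}, is right, and your constants check: $4\cdot\tfrac13\cdot\tfrac{1}{4\pi(g-1)}=\tfrac{2}{3\pi|\chi(S)|}$. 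The ``if'' direction of (4) and the Fuchsian case of ``only if''---holomorphy of $\mathcal{L}_\gamma$, reality of $\mathcal{L}_\gamma$ on $\mathcal{F}(S)$, $d\delta=0$ on the minimum locus, the splitting \eqref{eqn:DecompTQF}, and separation of tangent vectors by length differentials on $T(S)$---is literally the argument the paper runs for Theorem \ref{thm_main}(2)--(3).

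The genuine gap is the one you flag yourself: non-degeneracy off the Fuchsian locus. You cannot outsource exactly this step to \cite{Bridgeman_WPMetricQF}, because it is the crux of part (4); everything else in (4) is soft by comparison. What fills it is a concrete mechanism, not an abstract immersion claim: off $\mathcal{F}(S)$ one has $\delta(X_0)\in(1,2)$ by Bowen \cite{Bowen79_Markov}, equivalently the group contains loxodromic elements with non-real multiplier (a non-elementary subgroup of $\PSL(2,\mathbb C)$ with all traces real preserves a round circle, hence is Fuchsian). Given such an element, the degeneracy equations $\dot\delta(v)\,\ell_\gamma+\delta(X_0)\,\Re\, d\mathcal{L}_\gamma(v)=0$ for all $\gamma$ (your Liv\v sic reformulation) can be played against each other over cycles shadowing the non-real-multiplier cycle: the rotation produces enough independent real-linear conditions to force $\dot\delta(v)=0$ and $d\mathcal{L}_\gamma(v)=0$ for every $\gamma$, whence $v=0$ because complex length (trace) functions give local coordinates on $\QF(S)$. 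This is exactly the mechanism that Theorem \ref{thm_1.1} of \cite{HeNie_MetricHypComp} packages in the rational-map setting, and which the paper invokes (via $\delta>1$) to prove Theorem \ref{thm_main}(1); importing that argument, rather than the citation, would close your proof. A secondary, smaller gap: the identity $G_{X_0}(X)=\delta(X_0)\bigl(-\int\Psi_X\,dm_0\bigr)/\bigl(-\int\Psi_{X_0}\,dm_0\bigr)$, on which both (1) and (3) rest, is the key lemma of \cite{BridgemanTaylor} (the length of the pushed-forward Patterson--Sullivan current computed through the suspension-flow model); you assert it with a one-line heuristic, and it should be isolated as a lemma with a proof or a precise citation.
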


{\color{black}
\subsection*{Pure shearing and pure bending tangent vectors}
%{\color{black}(remove) (This has been moved here from a later section. The question about complex dynamical counterparts is on p. 18)}
Fix a closed orientable surface $S$ of genus at least $2$. By Bers simultaneous uniformization \cite{Bers_SimultaneousUniform}, we have a biholomorphism $\QF(S) = T(S) \times T(\overline{S})$,
where $\overline{S}$ is the surface $S$ with opposite orientation. Consider the diagonal embedding $i\from T(S) \to \QF(S)$, $i(X)=(X,\overline{X})$ for $X\in T(S)$. Set $\mathcal{F}(S):=i(T(S))$.

Fix $\Gamma \in \mathcal{F}(S)$ and denote by $J$ the (almost) complex structure on $\QF(S)$. Since $\mathcal{F}(S)$ is the locus of fixed points of the anti-holomorphic involution $\iota \colon \QF(S) \to \QF(S)$ defined by $(\tau_1,\overline{\tau_2}) \mapsto (\tau_2,\overline{\tau_1})$, the tangent space of $T_\Gamma \mathcal{QF}(S)$ can be decomposed as follows:
\begin{equation}\label{eqn:DecompTQF}
    T_\Gamma(\mathcal{QF}(S))= T_\Gamma(\mathcal{F}(S)) \oplus J \cdot T_\Gamma(\mathcal{F}(S)).
\end{equation}
Tangent vectors in $T_\Gamma(\mathcal{F}(S))$ are called {\em pure shearing} because the corresponding deformations of hyperbolic structures of surfaces can be understood as shearing along laminations \cite{Thu_Earthquakes}.
Tangent vectors in $J \cdot T_\Gamma(\mathcal{F}(S))$ are called {\em pure bendings} because they can be seen as the deformations of Fuchsian groups by bending their convex cores, which are isometrically embedded $\mathbb{H}^2$ in $\mathbb{H}^3$, along geodesic laminations. By Theorem \ref{thm:BridgemanTaylor}, the pure bending vectors are the only tangent vectors at which the pressure form degenerates.
}

\subsection{Pressure metrics on deformation spaces of Anosov representations}\label{sec:AnosovReps}

Anosov representations were introduced by Labourie \cite{Labourie06} as generalizations of representations of surface groups into Lie groups of rank one to Lie groups of higher rank. The theory then was generalized to representations of any word hyperbolic groups into Lie groups of higher rank by Guichard-Wienhard \cite{GW11}. In \cite{BCLS}, Bridgeman-Canary-Labourie-Sambarino constructed pressure metrics on deformation spaces of Anosov representations. We summarize the results in \cite{BCLS} in this subsection.

Suppose that $\Gamma$ is a word hyperbolic group. One example of such a group is the fundamental group of a closed hyperbolic manifold. Fix an integer $d \ge 2$. Given an integer $p$ with $1 \le p \le d-1$, we denote by $\mathcal{G}_p(\mathbb R^d)$ the Grassmannian of $p$-dimensional vector subspaces of $\mathbb R^d$. A homomorphism $\rho \from \Gamma \to \PSL(d,\mathbb R)$ is {\it $a_p$-Anosov} if there exist a pair of $\rho$-equivariant H\"older continuous maps $(\zeta_\rho^p, \zeta_\rho^{d-p}) \from \partial_\infty \Gamma \to \mathcal{G}_p(\mathbb R^d) \times \mathcal{G}_{d-p}(\mathbb R^d)$ such that for any $x, y \in \partial_\infty \Gamma$ with $x \neq y$, we have
$$\zeta_\rho^p(x) \oplus \zeta_\rho^{d-p}(y) = \mathbb R^d,$$
and a suitable associated flow is contracting; see \cite{BCLS} for details. {\color{black} Here the $\rho$-equivariance means $(\zeta_\rho^p, \zeta_\rho^{d-p})(\gamma\cdot x) = (\rho(\gamma) \cdot \zeta_\rho^p(x), \rho(\gamma) \cdot\zeta_\rho^{d-p}(x))$ for all $\gamma \in \Gamma$ and $x \in \partial_\infty \Gamma$.}
An $a_1$-Anosov representation is also called a {\it projective Anosov} representation.

A nice class of Anosov representations is given by {\it Hitchin representations}. Fix a closed orientable surface $S$ of genus $g \ge 2$. Denote by $\pi_1(S)$ its fundamental group. A homomorphism $\rho \from \pi_1(S) \to \SL(d,\mathbb R)$ is called {\it $d$-Fuchsian} if $\rho = \iota \circ \rho_0$ where $\iota \from \SL(2,\mathbb R) \to \SL(d,\mathbb R)$ is the Veronese embedding and $\rho_0 \in T(S)$. A homomorphism $\rho \from \pi_1(S) \to \SL(d,\mathbb R)$ is a {\it Hitchin homomorphism} if it can be deformed into a $d$-Fuchsian homomorphism. %{\color{black} to do: reference for veronese, why interesting.}
%A Hitchin representation is the conjugacy class of a Hitchin homomorphism. \todo{It looks a bit weird that a representation denotes a conjugacy class.}

Now we define the deformation spaces of Anosov representations on which the pressure metric will be built.
If $G$ is a reductive subgroup of $\SL(d,\mathbb R)$, an element of $G$ is {\it generic} if its centralizer is a maximal torus in $G$. In particular, an element of $\SL(d,\mathbb R)$ is generic if and only if it is diagonalizable over $\mathbb C$ with distinct eigenvalues. We say that a representation $\rho \from \Gamma \to G$ is {\it $G$-generic} if the Zariski closure of $\rho(\Gamma)$ contains a generic element of $G$. 
We denote by ${\rm Hom}(\Gamma,G)$ the space of homomorphisms from $\Gamma$ to $G$.
We say that $\rho \in {\rm Hom}(\Gamma,G)$ is {\it regular} if it is a smooth point of the algebraic variety ${\rm Hom}(\Gamma,G)$.

Denote by $\mathcal{C}(\Gamma,d)$ the space of conjugacy classes of regular, irreducible, projective Anosov representations of $\Gamma$ into $\SL(d,\mathbb R).$ Denote by $\mathcal{C}_g(\Gamma,G)$ the space of conjugacy classes of $G$-generic, regular, irreducible, projective Anosov representations. As shown in \cite[Section 7]{BCLS}, these spaces are real analytic manifolds.

If $\rho$ is a projective Anosov representation, we can associate to each conjugacy class $[\gamma]$ of $\gamma \in \Gamma$ its {\it spectral radius} $\Lambda(\gamma)(\rho)$, i.e., the spectral radius of $\rho(\gamma)$.  For $T \ge 0$, define
$$R_T(\rho) := \left\{ [\gamma] : \log(\Lambda(\gamma)(\rho)) \le T \right\}.$$
We note that the cardinality $\#(R_T(\rho))$ of $R_T(\rho)$ is finite for any $T>0$; see \cite[Proposition 2.8]{BCLS}. We define the {\it entropy} $h(\rho)$ of $\rho$ by
$$h(\rho) := \lim_{T \to \infty} \frac{1}{T} \log \#(R_T(\rho)).$$
The entropy is equal to the Hausdorff dimension of the limit set for convex cocompact representations into Lie groups of rank one; see the discussion after \cite[Corollary 1.7]{BCLS}.

If $\rho_1$ and $\rho_2$ are two projective Anosov representations, we define their {\it intersection number} ${\bf I}(\rho_1, \rho_2)$ by
$${\bf I}(\rho_1,\rho_2) := \lim_{T \to \infty} \left( \frac{1}{\#(R_T(\rho_1))} \sum_{[\gamma] \in R_T(\rho_1)} \frac{\log(\Lambda(\gamma)(\rho_2))}{\log(\Lambda(\gamma)(\rho_1))} \right).$$
The {\it normalized intersection number} ${\bf J}(\rho_1, \rho_2)$ is defined by
$${\bf J}(\rho_1, \rho_2) : = \frac{h(\rho_2)}{h(\rho_1)}{\bf I}(\rho_1,\rho_2).$$

We denote by $\mathrm{Out}(\Gamma)$ the group of outer automorphisms of $\Gamma$.

\begin{thm}\label{thm:AnsovReps}
Fix a word hyperbolic group $\Gamma$.
\begin{enumerate}
    \item The map ${\bf J}$ is $\mathrm{Out}(\Gamma)$-invariant and analytic \cite[Theorem 1.3]{BCLS}, and
    \[
        {\bf J}(\rho_1,-)\from \mathcal{C}(\Gamma,d) \to \Rbb
    \]
    has a unique global minimum at $\rho_1$ \cite[Theorem 1.1]{BCLS}. Hence the Hessian of ${\bf J}(\rho_1,-)$ defines a positive semi-definite {\color{black} symmetric bilinear form $\langle \cdot, \cdot \rangle_{\bf J}$ on the tangent space $T_{\rho_1}\mathcal{C}(\Gamma,d)$.
    %{\tiny \item For the pressure metric $||\cdot||_\Pcal$ defined by $\mathcal{E}^*(||\cdot||_{pm})$, we have
    %\[
    %    ||v||_{\bf J}=||v||_\Pcal
    %\]
    %for any $\rho \in \mathcal{C}(\Gamma,d)$ and $v\in T_\rho(\mathcal{C}(\Gamma,d))$.}
    \item $\langle \cdot, \cdot \rangle_{\bf J}$ is non-degenerate on $\mathcal{C}_g(\Gamma,d)$, i.e., it is a Riemannian metric \cite[Theorem 1.4]{BCLS}.}
\end{enumerate}
\end{thm}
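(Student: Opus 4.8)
The plan is to route everything through the thermodynamic formalism of Section~\ref{sec:ThermodynFormalism}, exactly as in the Fuchsian and quasi-Fuchsian cases, by constructing a thermodynamic map $\mathcal{E}\from\mathcal{C}(\Gamma,d)\to\mathcal{C}(\Sigma_A^+)$ and transporting the analyticity, convexity, and non-degeneracy of the pressure form back to $\mathcal{C}(\Gamma,d)$. First I would fix a subshift of finite type $(\Sigma_A^+,\sigma)$ coding the geodesic flow of the word hyperbolic group $\Gamma$, and recall from \cite{BCLS} that to each projective Anosov $\rho$ one associates a H\"older potential $\psi_\rho\from\Sigma_A^+\to\Rbb$ whose period over the periodic orbit labelled by a conjugacy class $[\gamma]$ equals $\log\Lambda(\gamma)(\rho)$, and which depends analytically on $\rho$ through the limit maps $(\zeta_\rho^p,\zeta_\rho^{d-p})$. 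Since $\psi_\rho$ is positive on average, the map $h\mapsto\mathcal{P}(-h\,\psi_\rho)$ is strictly decreasing (its derivative is $-\int\psi_\rho\,dm<0$), so the implicit function theorem yields a unique $h(\rho)>0$ with $\mathcal{P}(-h(\rho)\psi_\rho)=0$, agreeing with the entropy defined via $R_T(\rho)$; I set $\mathcal{E}(\rho):=[-h(\rho)\psi_\rho]\in\mathcal{C}(\Sigma_A^+)$. The $\mathrm{Out}(\Gamma)$-invariance of ${\bf J}$ is then formal: an automorphism permutes conjugacy classes and preserves the multiset $\{\log\Lambda(\gamma)(\rho)\}$, hence preserves $R_T$, $h$, ${\bf I}$, and ${\bf J}$, while inner automorphisms act trivially. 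Analyticity of ${\bf J}$ follows from analyticity of $\rho\mapsto\psi_\rho$ together with the analytic dependence of pressure, of $h(\rho)$, and of equilibrium measures on the underlying potential.

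For the global minimum in part (1), I would denote by $m_1:=m(-h(\rho_1)\psi_{\rho_1})$ the equilibrium measure and use equidistribution of $\rho_1$-periodic orbits to identify ${\bf I}(\rho_1,\rho_2)=\big(\int\psi_{\rho_2}\,dm_1\big)\big/\big(\int\psi_{\rho_1}\,dm_1\big)$. The pressure-zero condition gives $h_{m_1}(\sigma)=h(\rho_1)\int\psi_{\rho_1}\,dm_1$, so feeding $m_1$ as a test measure into the variational principle for $-h(\rho_2)\psi_{\rho_2}$ yields
\[
0=\mathcal{P}(-h(\rho_2)\psi_{\rho_2})\ \ge\ h_{m_1}(\sigma)-h(\rho_2)\int_{\Sigma_A^+}\psi_{\rho_2}\,dm_1 = h(\rho_1)\int_{\Sigma_A^+}\psi_{\rho_1}\,dm_1-h(\rho_2)\int_{\Sigma_A^+}\psi_{\rho_2}\,dm_1 .
\]
Rearranging gives ${\bf J}(\rho_1,\rho_2)\ge 1={\bf J}(\rho_1,\rho_1)$, so $\rho_1$ is a global minimum of ${\bf J}(\rho_1,-)$; hence its first derivative vanishes and its Hessian $\langle\cdot,\cdot\rangle_{\bf J}$ is positive semi-definite. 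Equality forces $m_1$ to be the equilibrium state of $-h(\rho_2)\psi_{\rho_2}$ as well, i.e. $-h(\rho_1)\psi_{\rho_1}\sim-h(\rho_2)\psi_{\rho_2}$, so that $\mathcal{E}(\rho_1)=\mathcal{E}(\rho_2)$; uniqueness of the minimum then reduces to injectivity of $\mathcal{E}$, which I would obtain from the fact that the marked spectral-radius spectrum determines a point of $\mathcal{C}(\Gamma,d)$. A direct computation using Proposition~\ref{prop_thm2.2} along the zero-pressure path $t\mapsto\mathcal{E}(\rho_t)$ (where $\mathcal{P}\equiv 0$ kills the first and second derivatives of pressure) identifies $\langle\cdot,\cdot\rangle_{\bf J}$ with a positive multiple of the pressure form $\langle\cdot,\cdot\rangle_\Pcal=\mathcal{E}^*\langle\cdot,\cdot\rangle_{pm}$.

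For part (2), non-degeneracy on $\mathcal{C}_g(\Gamma,d)$ becomes the statement that $\mathcal{E}$ is an immersion there. By the identification above and the variance formula, $\langle v,v\rangle_{\bf J}=0$ holds if and only if $\mathrm{Var}\big(d\mathcal{E}(v),m_1\big)=0$, i.e. (as in the non-degeneracy discussion following Equation~\eqref{eq_def_pm}) if and only if the cocycle $d\mathcal{E}(v)$ is cohomologous to a constant, which after the normalization forced by $\mathcal{P}\equiv 0$ means cohomologous to zero. By the Livšic theorem this says precisely that the first-order variation of every period $\log\Lambda(\gamma)(\rho)$ in the direction $v$ vanishes. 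The hard part, and the main obstacle, is the infinitesimal rigidity statement that such a $v$ must be zero on $\mathcal{C}_g(\Gamma,d)$: this is exactly where the genericity (Zariski closure containing a generic element), irreducibility, and regularity hypotheses enter, guaranteeing that the derivative of the marked spectral-radius spectrum is injective on tangent vectors. Granting this infinitesimal injectivity, $d\mathcal{E}$ is injective, the pullback of the non-degenerate $\langle\cdot,\cdot\rangle_{pm}$ remains non-degenerate, and $\langle\cdot,\cdot\rangle_{\bf J}$ is a Riemannian metric.
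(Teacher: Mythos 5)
Your architecture matches the one the paper summarizes, which is that of Bridgeman--Canary--Labourie--Sambarino; note that the paper itself gives no proof of this theorem --- it is a survey item quoted from [BCLS, Theorems 1.1, 1.3, 1.4], followed by a sketch of the thermodynamic mapping $\mathcal{E}\colon \rho\mapsto[-h(\rho)f_\rho]$, the flow analogue of your $\mathcal{E}$ (the paper works with the reparametrized Gromov geodesic flow $U_\rho\Gamma$, a topologically transitive metric Anosov flow, rather than directly with a subshift; this difference is cosmetic since such flows admit Markov codings). The parts of your argument that you actually carry out are correct and coincide with BCLS: $\mathrm{Out}(\Gamma)$-invariance is formal; the variational-principle computation giving $\mathbf{J}(\rho_1,\rho_2)\ge 1$, hence vanishing first derivative and positive semi-definite Hessian at $\rho_1$, is exactly their argument; and the reduction of degeneracy of $\langle\cdot,\cdot\rangle_{\mathbf{J}}$ to ``the first-order variation of every $\log\Lambda(\gamma)$ vanishes,'' via the variance criterion and Liv\v{s}ic's theorem, is also theirs.

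However, as a proof the proposal has two genuine gaps, and they are precisely the deep content of the cited theorems. First, for uniqueness of the minimum you reduce to injectivity of $\mathcal{E}$ and then assert that ``the marked spectral-radius spectrum determines a point of $\mathcal{C}(\Gamma,d)$.'' What the equality case actually yields is that $h(\rho_1)\psi_{\rho_1}$ and $h(\rho_2)\psi_{\rho_2}$ are cohomologous, i.e. $h(\rho_1)\log\Lambda(\gamma)(\rho_1)=h(\rho_2)\log\Lambda(\gamma)(\rho_2)$ for all $[\gamma]$; one must first prove $h(\rho_1)=h(\rho_2)$ (in BCLS this uses Benoist's theorem on limit cones of Zariski-dense subgroups) and then that equality of marked spectral radii forces conjugacy of irreducible representations --- neither step is a formality, and irreducibility enters here, not only in part (2). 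Second, and more seriously, in part (2) you explicitly write ``granting this infinitesimal injectivity'': but that infinitesimal rigidity --- that a tangent vector $v$ at a $G$-generic, regular, irreducible point along which all spectral radii are critical must vanish --- \emph{is} Theorem 1.4 of BCLS, and its proof (their Sections 9--10, via an analysis of trace functions, which is where genericity is genuinely used) is the hard part of the statement you are asked to prove. So your proposal correctly identifies the skeleton and correctly locates the difficulties, but it assumes both rigidity inputs rather than proving them; since the theorem is essentially the assertion that these inputs hold, the proposal is not a complete proof.
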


% An {\it analytic family} of projective Anosov homomorphisms is a continuous map $\beta \from M \to {\rm Hom}(\Gamma, \SL(d,\mathbb R))$ such that $M$ is an analytic manifold, $\beta_m = \beta(m)$ is projective Anosov for all $m \in M$ and $m \mapsto \beta_m(\gamma)$ is an analytic map from $M$ into $\SL(d, \mathbb R)$ for any $\gamma \in \Gamma$.
% \begin{thm}\cite[Theorem 1.3]{BCLS}
% Let $\Gamma$ be a word hyperbolic group. Then the entropy $h$ and the renomalized intersection ${\bf J}$ are well-defined positive, ${\rm Out}(\Gamma)$-invariant analytic functions on the spaces $\mathcal{C}(\Gamma,d)$ and $\mathcal{C}(\Gamma,d) \times \mathcal{C}(\Gamma,d)$ respectively. More generally, they are analytic functions on any analytic family of projective Anosov homomorphisms.

% Moreover, let $\alpha \from (-1,1) \to \mathcal{C}(\Gamma, d)$ be any analytic path with values in the deformation space, let ${\bf J}_\alpha(t) = {\bf J}(\alpha(0),\alpha(t))$, then
% $$\frac{d}{dt}\bigg|_{t=0} {\bf J}_{\alpha} = 0 \text{ and } \frac{d^2}{dt^2}\bigg|_{t=0} {\bf J}_{\alpha} \ge 0.$$
% \end{thm}

\cite[Section 9]{BCLS} discusses more details of degenerating vectors of the metrics outside of $\mathcal{C}_g(\Gamma,d)$.

\subsection*{Relating Anosov representations to thermodynamic formalism}
Given an Anosov representation $\rho$, Bridgeman-Canary-Labourie-Sambarino constructed a dynamical system associated to $\rho$ which allows the construction of a {\it thermodynamic mapping} from the deformation space of Anosov representations into the space of cohomology classes of pressure zero real-valued H\"older continuous functions on a symbolic space. Therefore, one can define (the pullback of) the pressure metric in Section \ref{sec:Bridgeman} on the deformation spaces of Anosov representations. It turns out that the Riemannian metric $\langle\cdot,\cdot\rangle_{\bf J}$ is equal to the (the pullback of) the pressure metric $\langle\cdot,\cdot\rangle_{pm}$; see \cite[Proposition 3.11]{BCLS}.
We begin by describing Bridgeman-Canary-Labourie-Sambarino's construction which associates a (uniformly hyperbolic) dynamical system to each representation $\rho$.

Fix a word hyperbolic group $\Gamma$. Denote by $U_0\Gamma$ the Gromov geodesic flow of $\Gamma$; see \cite{Champ94,Gromov87,Min05} for details.  If $\Gamma$ is the fundamental group of a hyperbolic surface $S$, then $U_0\Gamma$ is the geodesic flow on the unit tangent bundle of $S$.
Given a projective Anosov representation $\rho \from \Gamma \to \SL(d,\mathbb R)$, Bridgeman-Canary-Labourie-Sambarino  associated a {\it geodesic flow} $(U_{\rho}\Gamma, \{\phi_t\}_{t\in\mathbb R})$ to $\rho$, which is
H\"older orbit equivalent to the geodesic flow $U_0\Gamma$ of $\Gamma$. In particular, there is a H\"older continuous function $f_\rho \from U_0\Gamma \to \mathbb R$ such that the reparametrization of the Gromov geodesic flow $U_0\Gamma$ by $f_\rho$ is conjugate to the geodesic flow $U_\rho\Gamma$ of $\rho$. See \cite[Section 4]{BCLS} for details.

\begin{prop}[{\cite[Proposition 5.1]{BCLS}}]\label{prop27}
If $\rho \from \Gamma \to \SL(d,\mathbb R)$ is a projective Anosov representation, then the geodesic flow $(U_\rho\Gamma, \{\phi_t\}_{t\in\mathbb R})$ is a topologically transitive metric Anosov flow. 
\end{prop}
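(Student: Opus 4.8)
The plan is to transfer both conclusions from the Gromov geodesic flow $U_0\Gamma$ to $U_\rho\Gamma$ along the H\"older orbit equivalence recorded above. By construction, $(U_\rho\Gamma,\{\phi_t\})$ is H\"older conjugate to the reparametrization of $U_0\Gamma$ by the positive H\"older continuous function $f_\rho$. Since topological transitivity and the metric Anosov property are both invariant under H\"older conjugacy, it suffices to show that (i) the Gromov geodesic flow $U_0\Gamma$ is topologically transitive and metric Anosov, and (ii) each of these properties is preserved under reparametrization by a positive H\"older function.

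For topological transitivity I would work at the level of orbits. For a non-elementary word hyperbolic group the Gromov geodesic flow $U_0\Gamma$—whose phase space is a $\Gamma$-cocompact quotient of $\left(\partial_\infty\Gamma \times \partial_\infty\Gamma \setminus \Delta\right) \times \mathbb R$—is topologically transitive; this is a standard consequence of the minimality of the $\Gamma$-action on $\partial_\infty\Gamma$ together with the density of the endpoint pairs of periodic orbits. Reparametrization by $f_\rho$ leaves every orbit setwise unchanged, rescaling only the speed of travel, so a dense orbit of $U_0\Gamma$ is again a dense orbit of $U_\rho\Gamma$; topological transitivity therefore transfers immediately.

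The core of the argument is the metric Anosov (Smale flow) structure. I would first invoke the known metric Anosov structure on $U_0\Gamma$: via Mineyev's symmetric-metric construction one obtains H\"older local stable and unstable sets together with a local product structure, in which the forward (respectively backward) endpoint in $\partial_\infty\Gamma$ parametrizes the unstable (respectively stable) set, while the Gromov product supplies uniform exponential contraction along stable sets and expansion along unstable sets. To pass to $U_\rho\Gamma$ I would check that these axioms survive the time change. The stable and unstable sets are unaltered as subsets of the phase space; under the reparametrized flow the contraction of stable distances after time $\tau$ is governed by the additive cocycle $\int_0^\tau f_\rho$ evaluated along the orbit, and since $f_\rho$ is bounded between two positive constants this cocycle is comparable to $\tau$, so uniform exponential contraction and expansion persist with adjusted constants. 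The bracket map giving the local product structure is the same map, and its H\"older regularity is preserved because $f_\rho$ is H\"older and bounded away from zero.

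The main obstacle I anticipate is the precise verification that the H\"older local product structure and the uniform hyperbolicity estimates genuinely survive the nonlinear time change induced by $f_\rho$: one must track how the stable and unstable holonomies and the bracket deform under reparametrization, and show that the boundedness and H\"older continuity of $f_\rho$ keep all constants uniform. A secondary point is that establishing the metric Anosov structure for $U_0\Gamma$ itself rests on the fine analytic properties of the Gromov boundary (the Gromov product and Mineyev's flow space), which I would cite rather than reprove.
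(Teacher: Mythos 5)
Your proposal hinges on two transfer steps, and the first one is a genuine gap: you assume that the Gromov geodesic flow $U_0\Gamma$ of a general word hyperbolic group is itself a metric Anosov flow, and you say you would ``cite rather than reprove'' this. There is nothing to cite. The Gromov--Champetier--Mineyev construction produces a flow space that is canonical only up to H\"older orbit equivalence, with a metric enjoying some exponential-divergence properties, but the full Smale-flow axioms (H\"older stable/unstable laminations with uniform exponential contraction and a local product structure) have not been established for $U_0\Gamma$ in this generality. This is precisely why the proof in \cite[Proposition 5.1]{BCLS}, which the paper cites without reproving, goes the other way: Bridgeman--Canary--Labourie--Sambarino build $U_\rho\Gamma$ as a quotient of $\partial_\infty\Gamma^{(2)}\times\mathbb R$ using the limit maps of $\rho$, and they obtain the stable and unstable laminations and the contraction estimates \emph{from the representation itself} --- i.e., from the contraction of the flow on the associated bundles that defines the Anosov property --- rather than from any hyperbolic structure on $U_0\Gamma$. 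In other words, the linear algebra of $\rho$ is the source of the metric hyperbolicity, not something one pushes forward from the abstract group flow.

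The second step also contains an error as written: under a reparametrization, it is false that ``the stable and unstable sets are unaltered as subsets of the phase space.'' A time change preserves the \emph{weak} stable and unstable sets, but it shears the \emph{strong} ones: two points on a strong stable leaf of the original flow accumulate a clock discrepancy $\int_0^\infty\bigl(f_\rho(\phi_s x)-f_\rho(\phi_s y)\bigr)\,ds$, so under the new flow (with matched times) they remain a bounded but nonzero flow-time apart and their distance does not tend to zero. The correct strong leaves of the reparametrized flow are graphs over the old ones, shifted by this asymptotic cocycle difference, whose convergence uses the H\"older continuity of $f_\rho$ together with the exponential contraction; one must then re-verify the contraction rates and the H\"older regularity of the bracket for the new laminations. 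This reconstruction can be carried out, but it is a genuine argument, not the formal transfer your proposal describes --- and even once done, it cannot repair the proof without an independent source for the metric Anosov structure on $U_0\Gamma$, which is the missing ingredient identified above. (Your treatment of topological transitivity, by contrast, is fine: orbit equivalences carry dense orbits to dense orbits.)
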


We note that metric Anosov flows are a natural generalization of Anosov flows in the setting of compact metric spaces and were studied by Pollicott \cite{Pol87}.
\textcolor{black}{By Proposition \ref{prop27},} the thermodynamic formalism of topologically transitive metric Anosov flows can be applied to define a pressure form on the deformation space of Anosov representations.
Given $\rho \in \mathcal{C}(\Gamma,d)$, the pressure of the H\"older continuous function $(-h(\rho) f_\rho) \from U_0\Gamma \to \mathbb R$ satisfies $$\Pcal(-h(\rho) f_\rho) = 0.$$
Moreover, if $[\rho_1]=[\rho_2]$ then we have $[-h(\rho_1) f_{\rho_1}] = [-h(\rho_2) f_{\rho_2}]$; see \cite[Section 3]{BCLS}. Denote by $\mathcal{C}(U_0\Gamma)$ the set of cohomology classes of H\"older continuous functions on $U_0\Gamma$ with pressure zero.
Then there is a well-defined {\it thermodynamic mapping}
$$\mathcal{E} \from \mathcal{C}(\Gamma, d) \to \mathcal{C}(U_0\Gamma)$$ given by
$$\rho \mapsto [-h(\rho) f_\rho(x)].$$
%\todo[inline]{Define $\mathcal{C}(U_0\Gamma)$. Is it the set of cohomology classes of H\"older continuous functions on $U_0\Gamma$?}
If $\{\rho_t\}$ is a smooth family of projective Anosov representations and $\{f_t\}$ is an associated smooth family of H\"older reparametrizations, then the pullback of the pressure form equals the Hessian of the normalized intersection ${\bf J}$, namely,
$$\left|\left| \frac{d}{dt}\bigg|_{t=0} \rho_t\right|\right|_\Pcal^2:=\left|\left| \frac{d}{dt}\bigg|_{t=0} -h(\rho_t) f_t\right|\right|_{pm}^2 = \frac{d^2}{dt^2}\bigg|_{t=0} {\bf J}(\rho_0,\rho_t).$$
%\todo{HL:I erased one sentence, please check this paragraph}
%{\color{black} What written above is the pullback of the pressure norm and the norm defined by the Hessian of ${\bf J}$.}{\color{black} HL: Not sure what this sentence means and not sure we need this sentence.}

\subsection{Pressure {\color{black} forms} on hyperbolic components in the moduli space of rational maps}\label{sec:HeNie}
In this subsection, we survey results in \cite{HeNie_MetricHypComp} about pressure  {\color{black}forms} on hyperbolic components in the moduli spaces of rational maps.

A rational map $f\from  \textcolor{black}{\widehat{\mathbb{C}}} \to   \textcolor{black}{\widehat{\mathbb{C}}}$ is a map of the form
$$f(z) = \frac{p(z)}{q(z)},$$ 
where $p(z)$ and $q(z)$ are polynomials in a complex variable $z$. The {\it degree} of $f$ is the maximum of the degrees of $p(z)$ and $q(z)$, provided that $p(z)$ and $q(z)$ do not have a common divisor. The {\it Fatou set} of a rational map $f$ is the largest open subset of $\textcolor{black}{\widehat{\mathbb{C}}}$ on which the sequence $\{f^n\}_{n \ge 1}$ of iterates of the map forms a normal family in the sense of Montel. The {\it Julia set}, denoted by $\Julia(f)$, is the complement of the Fatou set in $\textcolor{black}{\widehat{\mathbb{C}}}$. Both the Fatou and Julia sets are {\it fully invariant} (i.e., both forward and backward invariant) under the rational map $f$.
A rational map is called {\it hyperbolic} if there exists a constant $C>1$ and a smooth conformal metric $||\cdot||_\mu$ on a neighborhood $U\subset \widehat{\mathbb{C}}$ of $\Julia(f)$ such that
$$||f'(z)||_\mu > C > 1$$
for any $z \in \Julia(f)$.
In other words, the dynamics of $f$ restricted to the Julia set $f\from \Julia(f) \to \Julia(f)$ is {\it uniformly hyperbolic}. %in the sense of ergodic theory.

For an integer $d \ge 2$, denote by $\mathrm{Rat}_d$ (resp. $\mathrm{Poly}_d$) the space of degree-$d$ rational maps (resp. polynomials). Denote by $\mathrm{rat}_d:= \mathrm{Rat}_d/\mathrm{Aut}(\textcolor{black}{\widehat{\mathbb{C}}})$ (resp. $\mathrm{poly}_d :=  \mathrm{Poly}_d/\mathrm{Aut}(\mathbb{C})$) the moduli space of degree-$d$ rational maps (resp. polynomials), modulo the action by conjugation of the group of M\"obius transformations (resp.\@ affine automorphisms). Then  $\mathrm{rat}_d$ is a complex $(2d-2)$-dimensional orbifold, and $\mathrm{poly}_d$ is a complex $(d-1)$-dimensional orbifold. A {\it hyperbolic component} of $\mathrm{rat}_d$ (resp. $\mathrm{poly}_d$) is a connected component of the set of hyperbolic maps.

\subsubsection{Symmetric bilinear forms $\langle\cdot,\cdot\rangle_G$ and semi-norms $||\cdot||_G$ on hyperbolic components in ${\rm Rat}_d$}
Fix a hyperbolic component $\widetilde{\Hcal}$ in ${\rm Rat}_d$ and $f \in \widetilde{\Hcal}$. There exists a neighborhood $U(f)$ of $f$ in $\widetilde{\Hcal}$ such that a quasi-conformal conjugacy $\phi_g \from \mathcal{J}(f) \to \mathcal{J}(g)$ is well-defined. We define $\delta \from \widetilde{\Hcal} \to \mathbb{R}$ as the function sending $g$ to the Hausdorff dimension of $\Julia(g)$.

%\todo[inline]{We only defined equilibrium measures for subshifts of finite types. The same issue occurs in the discussion following Proposition 2.7. The symbolic coding is discussed in subsubsection 2.5.3., but do we want to talk about equilibrium measures and potential functions without the symbolic coding?}
Denote by $\nu$ the equilibrium measure of the H\"older potential $-\delta(f)\log |f'| \from \mathcal{J}(f) \to \mathbb{R}$, which has zero pressure. That is, $\nu$ is the unique $f$-invariant probability measure on $\Julia(f)$ such that the measure-theoretic entropy $h_\nu(f)$ of $\nu$ equals $\delta(f)\int_{\Julia(f)}\log|f'|d\nu$.
Define a function $\mathrm{Ly}(\nu,\cdot) \from U(f)\to \mathbb R$ by
$$\mathrm{Ly}(\nu,g) :=  \int_{\Julia(g)} \log|g'| d\left((\phi_g)_* \nu\right)=\int_{\Julia(f)} \log|g' \circ \phi_g|d\nu .$$
%where $\phi_g \from \Julia(f) \to \Julia(g)$ is the quasi-conformal conjugacy. 
The function $\mathrm{Ly}(\nu,\cdot) \from U(f)\to \mathbb R$ is harmonic. In particular, it is real-analytic; see \cite[Proposition 2.10]{HeNie_MetricHypComp}.

%\begin{prop}[{\cite[Proposition 2.10]{HeNie_MetricHypComp}}]
%The function $\mathrm{Ly}(\nu,\cdot) \from U(f)\to \mathbb R$ is harmonic. In particular, it is real-analytic.
%\end{prop}

Now consider the real analytic function $G_{f}\colon U(f) \to \mathbb{R}$ given by 
$$G_f(g)=\delta(g)\mathrm{Ly}(\nu,g).$$
A key property of $G_f$ is that it has local minimum at $f$; namely, for any $g\in U(f)$, we have $$G_f(f)\le G_f(g).$$
See \cite[Proposition 4.1]{HeNie_MetricHypComp}.

%\begin{prop}[{\cite[Proposition 4.1]{HeNie_MetricHypComp}}]\label{prop_metric}
%Let $f \in\widetilde{\mathcal{H}}$ and $\nu$ be the equilibrium measure of the potential function $-\delta(f)\log |f'|$. Then for any $g\in U(f)$, we have $$G_f(f)\le G_f(g).$$
%\end{prop}
Since $G_f$ has local minimum at $f$, the Hessian of $G_f$ at $f$ defines a positive semi-definite symmetric bilinear form $\langle \cdot, \cdot \rangle_G$ on the tangent space $T_f\widetilde{\mathcal{H}}$; see \cite[Section 7]{BridgemanTaylor}. More specifically, choose a smooth $2$-parameter family $\gamma(t,s), t,s \in (-\epsilon, \epsilon)$ in $U(f)$ with $\gamma(0) = f$ and $\frac{\partial}{\partial t} \big|_{t=0}\gamma(t,0)=w \in T_f\widetilde{\mathcal{H}}$ and $\frac{\partial}{\partial s} \big|_{s=0}\gamma(0,s)=v \in T_f\widetilde{\mathcal{H}}$ .
Define
$$\langle w,v \rangle_G := \partial_{wv}^2 G_{f}= \frac{\partial}{\partial t} \Big|_{t=0}\frac{\partial}{\partial s} \Big|_{s=0}G_{f}(\gamma(t,s)).$$ 
Define a semi-norm $||\cdot||_G$ by $||v||_G:=\sqrt{\langle v,v\rangle_G}$ for $v\in T_f \widetilde{\mathcal{H}}$.

\subsubsection{Semi-norms $||\cdot||_G$ on hyperbolic components in ${\rm rat}_d$}
Consider a hyperbolic component $\mathcal{H}$ in the moduli space $\mathrm{rat}_d$. Denote by $\widetilde{\mathcal{H}}$ the corresponding hyperbolic component in ${\rm Rat}_d$.
Suppose that $(f_t)_{t\in(-\epsilon,\epsilon)}$ and $(g_t)_{t\in(-\epsilon,\epsilon)}$ are smooth paths in ${\rm Rat}_d$ so that $([f_t])$ and $([g_t])$ yield the same path in $\mathcal{H}$. Then $g_t=\gamma_t \cdot f_t \cdot \gamma_t^{-1}$ for a smooth family $\{\gamma_t\}$ of M\"obius transformations. Then the H\"{o}lder potential functions $-\delta(f_t)\log|f'_t|$ and $-\delta(g_t)\log|g'_t|$ are cohomologous so that $G_{g_t}=G_{f_t}$ for any $t$. The family $(h_t)$ also conjugates the neighborhoods $U(f_t)$ and $U(g_t)$ of $f_t$ and $g_t$. Hence the semi-norm $||\cdot||_G$ on $\widetilde{\mathcal{H}}$ descends to a semi-norm on $\mathcal{H}$. Abusing notation, we also denote the semi-norm on $\mathcal{H}$ by $||\cdot||_G$. The $2$-form  $\langle\cdot,\cdot\rangle_G$ requires a more careful argument when $[f_0]$ is an orbifold point of ${\rm rat}_d$, which we do not address in this article.

\subsubsection{Conformal equivalence}
Since $f \from \mathcal{J}(f) \to \mathcal{J}(f)$ is uniformly hyperbolic for any $[f] \in \mathcal{H}$ and $(\mathcal{J}(f_1),f_1)$ and $(\mathcal{J}(f_2),f_2)$ are topologically conjugate for any two points $[f_1],[f_2] \in \mathcal{H}$,
there exists a one-sided subshift of finite type (or symbolic coding) $\Sigma_A^+$ such that for every $[f] \in \mathcal{H}$, $(\mathcal{J}(f),f)$ is conjugate to $(\Sigma_A^+,\sigma)$. We denote by $\pi_f\from\Sigma_A^+ \to \Julia(f)$ the semi-conjugacy of symbolic coding.

Define $\Ecal\from \mathcal{H} \to \mathcal{C}(\Sigma_A^+)$ by
    \[
        [f] \mapsto \left[- \delta(f) \log |f' \circ \pi_f|\right].
    \]
For $[f]\in\mathcal{H}$ and $v\in T_{[f]}\mathcal{H}$, choose a smooth real $1$-dimensional curve  $c(t)$ in $\mathcal{H}$ defined on $(-\epsilon, \epsilon)$ with $c(0)=[f]$ and $c'(0)=v$. Recall that the pullback of the pressure {\color{black} norm} is given by
$$||v||_{\mathcal{P}} := \left|\left|\frac{d}{dt}\bigg|_{t=0}\mathcal{E}(c(t))\right|\right|_{\mathcal{P}}.$$
We call $||\cdot||_\Pcal$ the {\it pressure semi-norm} on $T_{[f]}\Hcal$.

It is proven in {\cite[Proposition 4.2]{HeNie_MetricHypComp} that  $||\cdot||_G$ and $||\cdot||_{\mathcal{P}}$ are conformally equivalent. More precisely, fixing the notations as above, we have
	$$||v||_{\mathcal{P}}^2 = \frac{||v||_G^2}{\delta(f)\int_{\Sigma_A^+}\log|f'\circ \pi_f| d\nu(x)}$$
 where $\nu$ is the equilibrium measure of $-\delta(f) \log|f'\circ \pi_f|$.
%\begin{prop}[{\cite[Proposition 4.2]{HeNie_MetricHypComp}}]
%	The metric $||\cdot||_G$ is conformally equivalent to the pressure metric $||\cdot||_{\mathcal{P}}$. More precisely, fixing the notations as above, we have
%	$$||v||_{\mathcal{P}}^2 = \frac{||v||_G^2}{\delta(f)\int_{\Sigma_A^+}\log|f'\circ \pi_f| d\nu(x)}$$ where $\nu$ is the equilibrium measure of $-\delta(f)\int_{\Sigma_A^+}\log|f'\circ \pi_f|$.
%\end{prop}
%\todo[inline]{This formula looks different from Theorem\ref{thm:BridgemanTaylor}-(3), which doesn't have the integral term in the denominator. I think this is because that Bridgeman normalized the Petersson-Sullivan geodesic current in \cite[Section3]{Bridgeman_WPMetricQF}, but Mary-Hongming didn't. Please double check this. }

\subsubsection{Non-degeneracy conditions for the pressure semi-norm}
{\color{black} A point $x \in \widehat{\mathbb C}$ is a {\it periodic point} of a rational map $f$ of period $n$ if $f^n(x) = x$ and $n$ is the smallest integer satisfying this condition. The {\it multiplier} of a periodic point $x$ (or of the periodic orbit $\{x,\ldots,f^{n-1}(x)\}$) is $(f^n)'(x)$. A multiplier is {\it repelling} if $|(f^n)'(x)|>1$.
}
%\todo[inline]{We should choose one notation for the Riemann sphere. We used $\hCbb$, $\Cbb\mathbb{P}^1$, and $\mathbb{P}^1$.-Let's do $\widehat{\mathbb{C}}$}

\begin{thm}[{\cite[Theorem 1.1]{HeNie_MetricHypComp}}] \label{thm_1.1}
Suppose thet $\Hcal$ is a hyperbolic component in ${\rm rat}_d$. For $[f]\in \Hcal$, if $[f]$ has a repelling multiplier that is not a real number, then $||\cdot||_G$ and $||\cdot||_\mathcal{P}$ are non-degenerate at $[f]$, i.e., they are norms on $T_{[f]} \mathcal{H}$. In particular, if $\delta(f)\in(1,2)$, then $||\cdot||_G$ and $||\cdot||_\mathcal{P}$ are non-degenerate at $[f]$.  
\end{thm}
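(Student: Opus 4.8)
The plan is to establish non-degeneracy of $\langle\cdot,\cdot\rangle_G$ (equivalently $||\cdot||_\Pcal$, by the conformal equivalence stated above) by analyzing the degenerate vectors directly through the thermodynamic machinery. First I would recall that $||v||_\Pcal^2 = \langle v, v\rangle_\Pcal$ vanishes precisely when the derivative $\psi = \left.\tfrac{d}{dt}\right|_{t=0}\mathcal{E}(c(t))$ is cohomologous to zero, by the strict convexity of pressure (Proposition in Section \ref{sec:ThermodynFormalism}). Writing $\Phi_{f_t} = -\delta(f_t)\log|f_t' \circ \pi_{f_t}|$, a degenerate vector corresponds to a deformation for which $\dot\Phi_0$ is a coboundary, i.e.\ $\dot\Phi_0(\underline{x}) = u(\sigma \underline{x}) - u(\underline{x})$ for some H\"older $u$. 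The strategy is to integrate this cohomological relation around periodic orbits: for a periodic point of period $n$ corresponding to a periodic sequence $\underline{x}$ with $\sigma^n \underline{x} = \underline{x}$, the coboundary contributions telescope to zero, so the derivative of the multiplier data along the deformation is constrained.

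The key computation is that summing $\dot\Phi_0$ over a periodic orbit yields the logarithmic derivative of the multiplier of that orbit. Concretely, for a periodic point $x$ of period $n$ with multiplier $\lambda = (f^n)'(x)$, one has $\sum_{i=0}^{n-1}\Phi_{f}(\sigma^i \underline{x}) = -\delta(f)\log|\lambda|$, and a coboundary summed over a periodic orbit vanishes. Differentiating, the degeneracy condition forces $\left.\tfrac{d}{dt}\right|_{t=0}\bigl(\delta(f_t)\log|\lambda(t)|\bigr) = 0$ for every periodic orbit simultaneously, where $\lambda(t)$ is the analytic continuation of the multiplier. I would then use the hypothesis: if $f$ has a repelling periodic orbit whose multiplier $\lambda$ is non-real, then $\lambda$ and $\bar\lambda$ are distinct eigenvalues of the same real-analytic family, and the derivative of $\arg\lambda(t)$ gives an independent real constraint beyond the modulus constraint. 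The holomorphicity of the multiplier as a function on the hyperbolic component (multipliers of repelling cycles vary holomorphically) means that controlling $\log|\lambda(t)|$ via the Cauchy–Riemann equations controls $\arg\lambda(t)$ as well; a non-real multiplier is exactly the situation where these two real directions are linearly independent, so a nonzero tangent vector $v$ cannot make both $\tfrac{d}{dt}\log|\lambda|$ and the companion quantity vanish. This is where I expect the main obstacle to lie: one must show that the degeneracy of $||v||_\Pcal$ forces vanishing of a \emph{complex} multiplier-derivative (not merely its real part), and this requires carefully exploiting the interplay between the real pressure form and the underlying complex-analytic structure of $\widetilde{\mathcal H}$, presumably via a complexified or holomorphic version of the intersection/length function $G_f$.

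For the final clause, I would argue that $\delta(f) \in (1,2)$ automatically guarantees a non-real repelling multiplier. If every repelling periodic orbit had a real (hence, being repelling, real with absolute value $>1$) multiplier, the Julia set would be forced to lie on a real-analytic curve — heuristically, all repelling cycles being real means the dynamics is conjugate to a map preserving a circle or line, whose Julia set has Hausdorff dimension either $1$ (a smooth circle) or would require $\delta(f)$ to hit the extreme values $1$ or $2$. More precisely, for hyperbolic rational maps the repelling periodic orbits are dense in $\Julia(f)$, and if all their multipliers were real one could show $\Julia(f)$ is a round circle or interval up to M\"obius conjugacy, forcing $\delta(f)=1$; conversely $\delta(f)=2$ corresponds to $\Julia(f)=\widehat{\mathbb C}$, excluded for hyperbolic maps. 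Hence $\delta(f)\in(1,2)$ rules out both degenerate cases and produces the required non-real multiplier, reducing the second assertion to the first. The delicate point here is establishing the rigidity statement ``all repelling multipliers real $\Rightarrow$ $\delta(f)\in\{1,2\}$,'' which I would attribute to known rigidity results for invariant line fields or to the classification of hyperbolic maps whose Julia set is a quasicircle with all-real periodic data.
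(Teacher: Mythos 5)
Your first reduction is correct and is precisely the route this paper records: $||v||_{\mathcal{P}}=0$ means the derivative potential $\dot\Phi_0$ has vanishing variance, hence is cohomologous to zero, and telescoping the coboundary over periodic orbits gives $\left.\frac{d}{dt}\right|_{t=0}\delta(f_t)\log|\lambda_C(f_t)|=0$ for every repelling cycle $C$. This is exactly Proposition \ref{Prop:DegenerateVector} (Corollary 4.3 of the cited work), which is the only ingredient of the proof that this survey actually reproduces; everything beyond it is deferred to the reference. Your treatment of the final clause is also right in outline, but ``all repelling multipliers real $\Rightarrow$ $\mathcal{J}(f)$ lies on a circle or $f$ is Latt\`es'' is not something to wave at invariant line fields: it is the theorem of Eremenko--van Strien on rational maps with real multipliers, and that is what must be invoked (the Latt\`es and $\mathcal{J}(f)=\widehat{\mathbb{C}}$ alternatives being excluded by hyperbolicity, so that $\delta(f)\le 1$, contradicting $\delta(f)\in(1,2)$).

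The genuine gap is the central step, which you flag yourself but then attempt to close with a Cauchy--Riemann argument that cannot work. Write $\mathcal{L}_C=\log\lambda_C$, holomorphic near $f$, and $L_C=\Re\mathcal{L}_C$. Degeneracy controls $DL_C(v)$ only in the single real direction $v$; by $\mathbb{C}$-linearity of $D\mathcal{L}_C$, the Cauchy--Riemann relations convert this into information about the argument-derivative in the \emph{rotated} direction $Jv$ (indeed $D(\Im\mathcal{L}_C)(v)=-D(\Re\mathcal{L}_C)(Jv)$), and say nothing about $D(\arg\lambda_C)(v)$. So ``controlling $\log|\lambda(t)|$ controls $\arg\lambda(t)$'' is false at the level of one directional derivative, which is all you have. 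The paper itself exhibits the failure: by Theorem \ref{thm_main}(2), at any Blaschke point $[f]\in\mathcal{B}_d^{fm}$ every nonzero $v=J\cdot w$ with $w\in T_{[f]}\mathcal{B}_d^{fm}$ satisfies $\Re D\mathcal{L}_C(v)=0$ for all cycles $C$, while $\Im D\mathcal{L}_C(v)=DL_C(w)$ is in general nonzero, and such $v$ \emph{is} degenerate. These examples are compatible with Theorem \ref{thm_1.1} only because Blaschke products have all-real repelling multipliers; hence a correct proof must use the existence of a single non-real multiplier as a global hypothesis ruling out this ``pure bending'' configuration --- which is the actual content of He--Nie's Theorem 1.1 --- rather than as a pointwise consequence of holomorphy of the multiplier functions. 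Your proposal (including the appeal to a ``complexified or holomorphic version of $G_f$,'' which is never constructed) does not supply this step, so the main implication remains unproven.
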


Theorem \ref{thm_1.1} is proved by using the following proposition, which provides a constraint on repelling multipliers when the semi-norms are degenerate.
\begin{prop}[{\cite[Corollary 4.3]{HeNie_MetricHypComp}}] \label{Prop:DegenerateVector}
Fix a hyperbolic component $\widetilde{\Hcal}$ in ${\rm Rat}_d$ and fix $f \in \widetilde{\Hcal}$. Then for any $v \in T_f(\widetilde{\Hcal})$, the following are equivalent:
    \begin{enumerate}
        \item $||v||_\Pcal=0$.
        %\item $\dot{g}(0,x)$ is a coboundary as a function on $\Julia(f)$.
        \item $\left.\frac{d}{dt}\right|_{t=0} \delta(f_t) \log{|\lambda_C(f_t)|}=0$ for the multiplier $\lambda_C(f_t)$ of any repelling cycle $C$. Here $\{f_t\}_{t \in (-1,1)}$ is any smooth curve in $\Hcal_f$ such that $f_0 = f$ and $\frac{d}{dt}\big|_{t=0} f_t = v$.
    \end{enumerate}
\end{prop}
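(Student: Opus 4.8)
The plan is to reduce the vanishing of $\|v\|_\Pcal$ to a coboundary condition on a symbolic potential, and then to convert that coboundary condition into the stated condition on multipliers by evaluating Birkhoff sums over periodic orbits and invoking the Liv\v{s}ic theorem.

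First I would set up the thermodynamic picture on a fixed symbolic model. Let $\Sigma_A^+$ be the subshift of finite type coding the hyperbolic dynamics on the Julia sets throughout $\widetilde{\Hcal}$, with semiconjugacies $\pi_{f_t}\from \Sigma_A^+ \to \Julia(f_t)$ satisfying $f_t\circ \pi_{f_t}=\pi_{f_t}\circ\sigma$, and set
\[
    \phi_t := -\delta(f_t)\,\log\bigl|f_t'\circ \pi_{f_t}\bigr| \in C^\alpha(\Sigma_A^+),
\]
so that $[\phi_t]=\Ecal(f_t)$ and $\Pcal(\phi_t)=0$ for all $t$ by Bowen's theorem \cite{Bowen79_Markov}. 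Writing $\dot\phi_0=\frac{d}{dt}\big|_{t=0}\phi_t$ and letting $m=m(\phi_0)$ be the equilibrium measure, the definition of the pulled-back pressure norm gives $\|v\|_\Pcal^2 = \mathrm{Var}(\dot\phi_0,m)/(-\int\phi_0\,dm)$, whose denominator is positive. Differentiating the identity $\Pcal(\phi_t)\equiv0$ and applying the first formula of Proposition \ref{prop_thm2.2} yields $\int \dot\phi_0\,dm=0$. Hence, by the non-degeneracy of the variance \cite[Proposition 4.12]{Parry90}, we have
\[
    \|v\|_\Pcal=0 \iff \mathrm{Var}(\dot\phi_0,m)=0 \iff \dot\phi_0 \text{ is cohomologous to } 0,
\]
that is, $\dot\phi_0(\underline x)=u(\sigma\underline x)-u(\underline x)$ for some H\"older $u$. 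It remains to translate this coboundary condition into $(2)$.

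Next I would compute Birkhoff sums over periodic orbits. Each periodic point $\underline x\in\Sigma_A^+$ with $\sigma^n\underline x=\underline x$ projects under $\pi_{f_t}$ to a repelling cycle $C$ of $f_t$ of period $n$ that moves holomorphically with $t$, and every repelling cycle arises this way. By the chain rule, $|\lambda_C(f_t)|=\prod_{i=0}^{n-1}\bigl|f_t'(f_t^i(\pi_{f_t}(\underline x)))\bigr|$, so the Birkhoff sum of $\phi_t$ along the orbit is
\[
    \sum_{i=0}^{n-1}\phi_t(\sigma^i\underline x)=-\delta(f_t)\,\log|\lambda_C(f_t)|.
\]
Differentiating at $t=0$ gives $\sum_{i=0}^{n-1}\dot\phi_0(\sigma^i\underline x)=-\frac{d}{dt}\big|_{t=0}\delta(f_t)\log|\lambda_C(f_t)|$. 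By the Liv\v{s}ic theorem (applicable since $A$ is aperiodic, hence $\sigma$ is topologically transitive, and $\dot\phi_0$ is H\"older), the function $\dot\phi_0$ is a coboundary if and only if all of its Birkhoff sums over periodic orbits of $\sigma$ vanish. Combining this with the computation above shows that $\dot\phi_0$ is cohomologous to $0$ precisely when $\frac{d}{dt}\big|_{t=0}\delta(f_t)\log|\lambda_C(f_t)|=0$ for every repelling cycle $C$, which is condition $(2)$. Together with the previous paragraph, this closes the chain of equivalences.

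The step I expect to require the most care is not this chain of equivalences but the analytic input underlying it: that $t\mapsto \phi_t$ is a genuinely smooth family in $C^\alpha(\Sigma_A^+)$ with H\"older derivative $\dot\phi_0$, so that both Proposition \ref{prop_thm2.2} and the Liv\v{s}ic theorem apply and the differentiation of the periodic Birkhoff sums is justified. This relies on the structure of the hyperbolic component, namely the holomorphic motion of the Julia sets inside $\widetilde{\Hcal}$, the real-analyticity of $g\mapsto\delta(g)$, and the H\"older dependence of $\pi_{f_t}$ on $t$. A secondary point to verify is that the semiconjugacy $\pi_{f_t}$ is only finite-to-one along the measure-zero grand orbit of the Markov-partition boundary; one checks that this does not affect the periodic-orbit correspondence used in the Liv\v{s}ic step, since every periodic orbit of $\sigma$ nonetheless projects to a repelling cycle of $f_t$.
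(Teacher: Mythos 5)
Your proof is correct and follows essentially the same route as the source this paper defers to for the statement (the paper itself gives no internal proof, citing [Corollary 4.3] of He--Nie): reduce $||v||_\Pcal=0$ via Proposition \ref{prop_thm2.2} and the non-degeneracy of the variance to $\dot\phi_0$ being cohomologous to zero, then use the periodic-orbit correspondence and Liv\v{s}ic's theorem to identify that coboundary condition with the vanishing of $\left.\frac{d}{dt}\right|_{t=0}\delta(f_t)\log|\lambda_C(f_t)|$ over all repelling cycles. The only points needing polish --- that a cycle meeting Markov-partition boundaries may lift only to a $\sigma$-periodic orbit whose period is a proper multiple of the cycle's period (harmless, since the Birkhoff sum just picks up an integer factor), and the smoothness of $t\mapsto\phi_t$ in $C^\alpha(\Sigma_A^+)$ --- are exactly the technical caveats you already flagged, and neither changes the argument.
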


Theorem \ref{thm_1.1} has the following corollary for hyperbolic components in $\mathrm{poly}_d$. 
\begin{coro}[{\cite[Corollary 1.3]{HeNie_MetricHypComp}}]\label{cor_SL}
Fix a hyperbolic component  $\mathcal{H}$ in $\mathrm{poly}_d$ that is neither the central hyperbolic component nor the shift locus. Then $||\cdot||_G$ is non-degenerate.
\end{coro}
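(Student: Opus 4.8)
```latex
The plan is to deduce Corollary \ref{cor_SL} from Theorem \ref{thm_1.1} by showing that, under the stated hypotheses, a polynomial in $\mathcal{H}$ necessarily possesses a non-real repelling multiplier. The contrapositive strategy is cleaner: I would assume that $||\cdot||_G$ is degenerate at some $[f] \in \mathcal{H}$ and derive that $\mathcal{H}$ must be either the central hyperbolic component or the shift locus. By Theorem \ref{thm_1.1}, degeneracy forces \emph{every} repelling multiplier of $f$ to be real. So the core of the argument is the dynamical claim: \emph{if every repelling periodic multiplier of a hyperbolic polynomial $f$ is real, then $f$ lies in the central component or the shift locus.}

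First I would recall the structure of hyperbolic polynomials. The Julia set $\mathcal{J}(f)$ carries the repelling cycles as a dense subset, and the multipliers of these cycles are conjugacy invariants that vary (pluri)harmonically / real-analytically over $\mathcal{H}$. For a polynomial, the point at infinity is a superattracting fixed point whose basin is completely invariant. The two exceptional components named in the statement have a clean characterization: the \emph{shift locus} consists of those $f$ for which all finite critical points escape to infinity, so $\mathcal{J}(f)$ is a Cantor set on which $f$ is conjugate to the one-sided full shift; the \emph{central hyperbolic component} is the component of $z \mapsto z^d$, where $\mathcal{J}(f)$ is a Jordan curve and $f$ is conformally conjugate on a neighborhood of $\mathcal{J}(f)$ to the rigid model $z\mapsto z^d$ on the unit circle. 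In both of these cases the Julia dynamics has an extra symmetry (an $\mathbb{R}$-structure coming from an invariant real-analytic model) that forces all multipliers to be real, and conversely.

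The key step is to argue the reverse implication. I would use the fact that if all repelling multipliers are real and positive, then the transfer operator / thermodynamic data of $(\mathcal{J}(f), f)$ is compatible with an invariant $\mathbb{R}$-projective (i.e.\ real-analytic one-dimensional) structure on the Julia set: realness of the full spectrum of periodic-orbit multipliers is a rigidity condition, analogous to the Fuchsian rigidity in Theorem \ref{thm:BridgemanTaylor}, where the degeneracy locus of the pressure form is exactly the totally real (Fuchsian) locus. Concretely, the collection $\{\log|\lambda_C| + i\arg\lambda_C\}$ being purely real for all cycles $C$ means the holonomy of the natural complex affine structure is real, which by a Bowen-type rigidity or by Zdunik/Mañé-type dimension rigidity forces $\mathcal{J}(f)$ to be either totally disconnected (Cantor, hence the shift locus, where multiplier arguments are unconstrained but the dimension $\delta(f)$ degeneration in Proposition \ref{Prop:DegenerateVector} can vanish freely) or a real-analytic curve conjugate to $z^d$ (the central component). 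I would invoke Theorem \ref{thm_1.1} to conclude that away from these two exceptional components some non-real repelling multiplier exists, hence $||\cdot||_G$ is a norm.

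The main obstacle I anticipate is proving the dynamical dichotomy rigorously: showing that ``all repelling multipliers real'' genuinely characterizes the shift locus together with the central component, and excludes every other hyperbolic polynomial. This requires ruling out, for example, hyperbolic polynomials with a connected but non-circular Julia set all of whose multipliers happen to be real; one expects this to fail generically, but the argument must be uniform over the whole component. I would handle this either by a rigidity theorem (a real-analytic invariant structure on $\mathcal{J}(f)$ forces linearizability to the power map) or, following the cited proof in \cite{HeNie_MetricHypComp}, by exhibiting an explicit cycle with non-real multiplier whenever $f$ has a non-preperiodic escaping structure that is neither the full shift nor the power map. The subtlety is that the multiplier realness condition is a codimension-type constraint, so the clean statement is that the \emph{entire} component, not merely a generic point, avoids total multiplier-realness unless it is one of the two named components.
```
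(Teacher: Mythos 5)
Your reduction is the right one, and it is the same route the paper intends (the corollary is presented there as a consequence of Theorem \ref{thm_1.1}, quoting He--Nie): assume $||\cdot||_G$ degenerates at some $[f]\in\mathcal{H}$, conclude from Theorem \ref{thm_1.1} that \emph{every} repelling multiplier of $f$ is real, and then show that total multiplier-realness forces $[f]$ into the central component or the shift locus. The genuine gap is exactly at the step you yourself flag as the ``main obstacle'': the dichotomy is never proved, and the tools you gesture at (Bowen-type rigidity, Zdunik/Ma\~n\'e-type dimension rigidity) do not deliver it. Dimension rigidity in particular breaks down for the ``mixed'' hyperbolic components in which some but not all critical points escape: there $\mathcal{J}(f)$ is disconnected but not a Cantor set, the maximal-entropy (harmonic) measure has dimension $<1$, and Zdunik's comparison gives no lower bound $\mathrm{H.dim}(\mathcal{J}(f))>1$, so the ``in particular'' clause of Theorem \ref{thm_1.1} cannot be invoked. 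The missing ingredient is the rigidity theorem of Eremenko--van Strien on rational maps with real multipliers: if all repelling cycles of a rational map have real multipliers, then either the map is a flexible Latt\`es example or its Julia set lies in a circle (or line). A hyperbolic polynomial is not Latt\`es, so $\mathcal{J}(f)$ lies in a circle, and the endgame is elementary: if $\mathcal{J}(f)$ contains an arc, then expansion together with the blow-up property of Julia dynamics (iterates of any relatively open piece of $\mathcal{J}(f)$ cover $\mathcal{J}(f)$) forces $\mathcal{J}(f)$ to be an interval or the whole circle --- the interval (Chebyshev) case is excluded by hyperbolicity, and the circle case makes $f$ affinely conjugate to $z^d$, so $\mathcal{H}$ is the central component; if $\mathcal{J}(f)$ contains no arc, it is totally disconnected, hence there are no bounded Fatou components, all finite critical points escape, and $\mathcal{H}$ is the shift locus.

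One side remark in your write-up is false, though not load-bearing: it is not true that membership in the shift locus or the central component forces all multipliers to be real (``and conversely''). A generic map in either component has non-real repelling multipliers --- for instance $z^2+c$ with $|c|$ large and $c$ non-real lies in the shift locus --- and within the central component only the center $z^d$ itself has all multipliers real. These two components are excluded from the corollary because they contain \emph{some} maps at which the sufficient criterion of Theorem \ref{thm_1.1} fails (real polynomials with $\mathcal{J}(f)\subset\mathbb{R}$ in the shift locus, and $z^d$ in the central component), not because realness of multipliers holds throughout them; whether the pressure semi-norm actually degenerates at those points is a separate question that Theorem \ref{thm_1.1} does not decide.
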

{\color{black} 
Recall that in $\mathrm{poly}_d$, the {\em central hyperbolic component} is the hyperbolic component containing $z \mapsto z^d$, and the {\em shift locus} is the hyperbolic component consisting of polynomials of which all the critical points are in the attracting basin of the infinity.}

We will see in the next section that the condition in Theorem \ref{thm_1.1} is sharp. In particular, we consider the space $\mathcal{QB}_d^{fm}$ of conjugacy classes of degree-$d$ quasi-Blaschke products with marked fixed points, which will be seen as a hyperbolic component.
This hyperbolic component does not satisfy the condition in Theorem \ref{thm_1.1} and we show that the pressure semi-norm is not positive definite.

% \subsection*{Comparison of the proofs of conformal equivalence for quasi-Fuchsian spaces (Theorem \ref{thm:ConfEqQFuch}) and for hyperbolic components (Theorem \ref{thm:ConfEqHypComp})} {\color{black} Insung: Please review this paragraph. }Bridgeman showed the conformal equivalence between $||\cdot ||_G$ and $||\cdot ||_\Pcal$ by showing that
% \[
%     L_{\mu_X}(X(t))=\frac{\int \phi_t \, dm}{\int \phi_0 \, dm}
% \]
% where $L_{\mu_X}$ is defined the continuous extension of the length function of closed curves. Then the left hand side yields the Weil-Petersson metric by \cite{Wolpert_WPmetricbyThurston}, and the right hand side yields the pressure metric. The proof is more involved because the thermodynamic formalism can be applied via the Bowen-Series' Markov expanding maps. In complex dynamics, however, He-Nie adopt it as a definition, see Equation \eqref{eq:M for HypComp}. Complex dynamics lacked a Weil-Petersson metric in a classical sense to which He-Nie needed to compare their pressure metric. Recall that McMullen's definition of Weil-Petersson metric also uses the pressure metric. 

\section{Degeneracy loci of the pressure semi-norms in quasi-Blaschke products spaces}\label{sec_DegeLoci_QB}
In this section, we study the degeneracy loci of the pressure  semi-norm in the space $\QB_d^{fm}$ of conjugacy classes of degree $d \ge 2$ quasi-Blaschke products with marked fixed points. In Section \ref{sec_defQBd}, we define the space $\QB_d^{fm}$; see Definition \ref{def_qbd}. In Section \ref{sec_3.2}, we prove the main result on the degeneracy locus of the pressure semi-norm $||\cdot||_\Pcal$; see Theorem \ref{thm_main}.

\subsection{The space of fixedpoint-marked quasi-Blaschke products}\label{sec_defQBd}
In this section, we investigate the space $\QB_d^{fm}$ of conjugacy classes of degree $d \ge 2$ quasi-Blaschke products with marked fixed points.

\subsection*{Fixedpoint-marked rational maps} For $d\ge 2$, any degree-$d$ rational map $f$ has $d+1$ fixed points $x_1,x_2,\dots,x_{d+1}$ counted with multiplicity. A rational map $f$ together with an ordered $(d+1)$-tuple of its fixed points $(f; x_1,x_2,\dots,x_{d+1})$ is called a {\it rational map with marked fixed points} or a {\it fixedpoint-marked rational map}. For simplicity, we sometimes omit $x_i$'s and say that $f$ is a fixedpoint-marked rational map when the marking of the fixed points are understood or inessential in the context. Denote by ${\rm Fix}(f)$  the set of $(d+1)$ fixed points with multiplicity.

{\color{black} We define the space of degree-$d$ rational maps with marked fixed points ${\rm Rat}_d^{fm}$ by
\[
    {\rm Rat}_d^{fm}:= \left\{(f; x_1,x_2,\dots,x_{d+1})\in {\rm Rat}_d \times \widehat{\mathbb{C}}^{d+1}~|~\{x_1,x_2,\dots,x_{d+1}\}={\rm Fix}(f)\right\}.
\] By \cite[Lemma 9.2]{Milnor_HypComp}, $\Rat_d^{fm}$ is a \textcolor{black}{ complex  manifold.} 
% Let $\Rat_{d,dist}^{fm}$ be the subset of $\Rat_d^{fm}$ consisting of fixedpoint-marked rational maps that have $d+1$ distinct fixed points. Applying the implicit function theorem to
% \[
%     (f;x_1,x_2,\dots,x_{d+1}) \mapsto (f(x_1)-x_1,f(x_2)-x_2,\dots,f(x_{d+1})-x_{d+1}),
% \]
% we can show that $\Rat_{d,dist}^{fm}$ is a complex manifold.
We remark that having $d+1$ distinct fixed points is equivalent to having no fixed points with multiplier $1$.
% \todo[inline]{I think \cite[Lemma 9.2]{Milnor_HypComp} already shows that $\Rat_d^{fm}$ is a complex manifold. The above paragraph shows that $\Rat_{d,dist}^{fm} \to \Rat_{d,dist}$ is a holomorphic covering (with no branch points.)}
%{\tiny
%{\color{black} do we need the projection map?} {\color{black} good point. It would be helpful to state that $p$ is biholomorphic on each hyperbolic component of ${\rm Rat}_d^{fm}$, and it maps $(d+1)!$ hyperbolic components to the same hyperbolic component of ${\rm Rat}_d$.}
%For the projection map $p\from \Rat_d^{fm} \to \Rat_d$ that forgets the markings of fixed points, the restriction of $p$ to $\Rat_{d,dist}^{fm}$ is a holomorphic covering map of degree $(d+1)!$ over the set $\Rat_{d,dist}$ of degree-$d$ rational maps without parabolic fixed points.
%}
}

{\color{black} A fixedpoint-marked rational map $(f;x_1,x_2,\dots,x_{d+1})$ is {\it hyperbolic} if $f$ is a hyperbolic rational map. A hyperbolic component in ${\rm Rat}_d^{fm}$ is a connected component of the subset of hyperbolic fixedpoint-marked rational maps.}

M\"obius transformations {\color{black} in} $\PSL(2,\mathbb{C})$ act on ${\rm Rat}^{fm}_d$ by
\[
    \phi\cdot (f;x_1,x_2,\dots,x_{d+1}):= (\phi \circ f \circ \phi^{-1}; \phi(x_1),\phi(x_2),\dots,\phi(x_{d+1}))
\]
so that the action is free on the complement of the locus consisting of rational maps having less than three fixed points. In particular, the action is free on the set of hyperbolic fixedpoint-marked rational maps. We denote by ${\rm rat}_d^{fm}$ the quotient of ${\rm Rat}_d^{fm}$ by the $\PSL(2,\mathbb{C})$-action and call it the space of conjugacy classes of degree-$d$ rational maps with marked fixed points. We refer the reader to \cite[Section 9]{Milnor_HypComp} for details on fixedpoint-marked rational maps.

\begin{defn}[Quasi-Blaschke products]
A hyperbolic rational map $f$ is said to be a {\it quasi-Blaschke product} if its Julia set $\mathcal{J}(f)$ is a quasi-circle, and $f$ fixes each of the two Fatou components.  
\end{defn}

We define an \textcolor{black}{open} subset $\widetilde{\mathcal{U}\QB}^{fm}_d$ of ${\rm Rat}_d^{fm}$ by
$$\widetilde{\mathcal{U}\QB}^{fm}_d := \left\{ (f;x_1,x_2,\dots,x_{d+1}) \in {\rm Rat}_d^{fm} : f \text{ is a quasi-Blaschke product}\right\}.$$
{\color{black} Consider the set of quasi-Blaschke products $\widetilde{\QB}_d$ in ${\rm Rat}_d$. One can show that $\widetilde{\QB}_d$ is the hyperbolic component containing $z \mapsto z^d$, for example, by using \cite[Proposition 5.5]{McM_AutRat} and the connectivity of the space of Blaschke products. Similarly, one can show that $\widetilde{\mathcal{UQB}}_d^{fm}$ is the union of the hyperbolic components containing $z \mapsto z^d$ with marked fixed points.}

Let us discuss the connected components of $\widetilde{\mathcal{U}\QB}_d^{fm}$. For two ordered sets $A=\{\alpha_1,\alpha_2,\dots,\alpha_n\}$ and $B=\{\beta_1,\beta_2,\dots,\beta_n\}$, we say that {\em $A$ and $B$ have the same cyclic order (resp.\@ reversed cyclic orders)} if there exists an integer $k$ such that $\alpha_{i}=\beta_{i+k}$ (resp.\@ $\alpha_i=\beta_{-i+k}$) for any $i\in \{1,2,\dots,n\}$ where the indices are considered modulo $n$. Denote by $I$ the set obtained as the quotient of the $(d+1)$-symmetric group $S_{d+1}$ by an equivalence relation $\sim$ defined as follows: for $\sigma,\sigma' \in S_{d+1}$,
$\sigma\sim \sigma'$ if and only if
\begin{enumerate}
    \item $\sigma(1)=\sigma'(1)$, $\sigma(2)=\sigma'(2)$, and the ordered sets $\{\sigma(3),\sigma(4),\dots, \sigma(d+1)\}$ and\\ $\{\sigma'(3),\sigma'(4),\dots, \sigma'(d+1)\}$ have the same cyclic order, or
    \item $\sigma(1)=\sigma'(2)$, $\sigma(2)=\sigma'(1)$, and the ordered sets $\{\sigma(3),\sigma(4),\dots, \sigma(d+1)\}$ and\\ $\{\sigma'(3),\sigma'(4),\dots, \sigma'(d+1)\}$ have the reversed cyclic orders.
\end{enumerate}

{\color{black} We define a map $\Phi\from \widetilde{\mathcal{U}\QB}_d^{fm} \to I$ as follows. Take $(f; x_1,x_2,\dots,x_{d+1})\in \widetilde{\mathcal{U}\QB}_d^{fm}$. Then there exists a M\"{o}bius transformation $\phi$ so that $f^\phi:=\phi \circ f \circ \phi^{-1}$ has two attracting fixed points at $0$ and $\infty$. The Julia set $\Julia(f^\phi)$ is a quasi-circle on the plane. We define $\Phi((f; x_1,x_2,\dots,x_{d+1}))$ to be an element $\sigma$ in $S_{d+1}$ such that $\phi(x_{\sigma(1)})=0$, $\phi(x_{\sigma(2)})=\infty$ and the set of points $\{\phi(x_{\sigma(3)}),\phi(x_{\sigma(4)}),\dots, \phi(x_{\sigma(d+1)})\}$ is ordered counter-clockwise along the quasi-circle $\Julia(f^\phi) \subset \mathbb{C}$.
%Then there exists $\sigma\in S_{d+1}$ {\color{black} to do: why exists?} so that $\phi(x_{\sigma(1)})=0$, $\phi(x_{\sigma(2)})=\infty$ and the set $\{\phi(x_{\sigma(3)}),\phi(x_{\sigma(4)}),\dots, \phi(x_{\sigma(d+1)})\}$ is ordered counter-clockwise along the quasi-circle $\Julia(f^\phi) \subset \mathbb{C}$. 
We note that the element $\sigma\in S_{d+1}$ is well-defined up to the equivalence class defined above; exchanging two attracting fixed points reverses the cyclic order.}

Moreover, the map $\Phi\from \widetilde{\mathcal{U}\QB}_d^{fm} \to I$ is continuous because of the holomorphic motion of the fixed points \cite{MSS_DynRatMap,Lyu83typical}. Hence we have a function $\Phi_*\from \pi_0\left( \widetilde{\mathcal{U}\QB}_d^{fm}\right) \to I$.

\begin{lem} \label{lem_bijective}
The function
    $\Phi_*\from \pi_0\left( \widetilde{\mathcal{U}\QB}_d^{fm}\right) \to I$ is bijective.
\end{lem}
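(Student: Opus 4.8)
The plan is to exhibit $\widetilde{\mathcal{U}\QB}_d^{fm}$ as a covering space of $\widetilde{\QB}_d$ and to compute the associated monodromy. Since every $f\in\widetilde{\QB}_d$ is hyperbolic, it has no fixed point of multiplier $1$, so its $d+1$ fixed points are distinct and move holomorphically with $f$ by the holomorphic motion of the fixed points. Hence the forgetful map $p\from \widetilde{\mathcal{U}\QB}_d^{fm}\to\widetilde{\QB}_d$, $(f;x_1,\dots,x_{d+1})\mapsto f$, is a covering map whose fiber over $f$ is the set of orderings of ${\rm Fix}(f)$, of cardinality $(d+1)!$. As $\widetilde{\QB}_d$ is connected and contains $z\mapsto z^d$, the set $\pi_0(\widetilde{\mathcal{U}\QB}_d^{fm})$ is in bijection with the set of orbits of the monodromy action of $\pi_1(\widetilde{\QB}_d,\, z^d)$ on the fiber $p^{-1}(z^d)$.

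I would base everything at $z\mapsto z^d$, whose fixed points are the two superattracting points $0,\infty$ together with the $d-1$ repelling points forming the $(d-1)$-st roots of unity on $\mathbb S^1=\Julia(z^d)$, in counterclockwise cyclic order. Choosing $\phi={\rm id}$ in the definition of $\Phi$ identifies the fiber $p^{-1}(z^d)$ with $S_{d+1}$ via $\ell\mapsto\Phi(\ell)=\sigma$. Every $\sigma\in S_{d+1}$ is realized by marking the fixed points of $z^d$ accordingly, so $\Phi$ maps $p^{-1}(z^d)$ onto $S_{d+1}$; composing with the quotient $S_{d+1}\to I$ shows that $\Phi_*$ is surjective.

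The core step is to show that the monodromy group $M\subseteq S_{d+1}$, acting on ${\rm Fix}(z^d)$, is the dihedral group $D_{d-1}$ of order $2(d-1)$. For the lower bound I would produce two loops. First, $\psi\mapsto e^{i\psi}z^d$ for $\psi\in[0,2\pi]$ is a loop in $\widetilde{\QB}_d$, since each $e^{i\psi}z^d$ is conjugate to $z^d$ by a rotation; it fixes $0$ and $\infty$ and rotates the repelling points by one step, giving a cyclic rotation $\rho$ of order $d-1$. Second, choosing a path $g_t$ in $\PSL(2,\mathbb C)$ from ${\rm id}$ to $\iota(z)=1/z$ gives a loop $t\mapsto g_t\circ(z^d)\circ g_t^{-1}$ in $\widetilde{\QB}_d$ (using that $\iota$ commutes with $z^d$), whose monodromy is $\iota|_{{\rm Fix}}$, namely the involution $\tau$ interchanging $0\leftrightarrow\infty$ and reversing the cyclic order of the repelling points. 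Thus $\langle\rho,\tau\rangle=D_{d-1}\subseteq M$. For the upper bound, any monodromy preserves the attracting/repelling partition (along a loop of hyperbolic maps no multiplier crosses the unit circle), so it permutes $\{0,\infty\}$ and permutes the repelling points. Tracking one attracting fixed point $a(t)$ continuously and orienting the quasicircle $\Julia(f_t)$ as the boundary of the Fatou component containing $a(t)$, the oriented cyclic order of the continuously tracked repelling points is a discrete invariant depending continuously on $t$, hence constant; comparing $t=0$ with $t=1$ shows the repelling points are cyclically rotated when $\{0,\infty\}$ is fixed and reflected when $\{0,\infty\}$ is interchanged. Therefore $M\subseteq D_{d-1}$, and so $M=D_{d-1}$.

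It remains to match $M$-orbits with $\sim$-classes. Under the identification $p^{-1}(z^d)\cong S_{d+1}$, the generator $\rho$ preserves the labels at $0$ and $\infty$ and cyclically shifts the labels of the repelling points, which is exactly relation (1) in the definition of $\sim$; the generator $\tau$ interchanges the labels at $0$ and $\infty$ and reverses their cyclic order, which is exactly relation (2). Hence the $M$-orbits on $S_{d+1}$ coincide with the $\sim$-equivalence classes, which shows that $\Phi_*\from \pi_0(\widetilde{\mathcal{U}\QB}_d^{fm})\to I$ is a bijection. I expect the main obstacle to be the upper bound in the monodromy computation, specifically the orientation-tracking argument showing that a reversal of the cyclic order of the repelling fixed points is necessarily accompanied by an interchange of the two attracting fixed points.
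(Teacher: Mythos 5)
Your proof is correct, and it takes a genuinely different route from the paper's. Both arguments obtain surjectivity the same way (all $(d+1)!$ markings occur over a single map), but the paper proves injectivity by normalizing two marked maps with the same invariant so that the labels $\sigma(1),\sigma(2)$ sit at $0,\infty$, and then deforming each map, within the component and fixing $0,\infty$, to the postcritically finite center $z\mapsto z^d$ (via McMullen's Corollary 3.6 on automorphisms of rational maps and McMullen--Sullivan's Theorem 2.9), with preservation of the cyclic order forcing both marked maps to connect to the same marked copy of $z^d$. You instead realize the forgetful map $p\from \widetilde{\mathcal{U}\QB}_d^{fm}\to\widetilde{\QB}_d$ as a covering (legitimate: hyperbolicity excludes multiplier-one fixed points, so the $d+1$ fixed points are distinct and move holomorphically), identify $\pi_0$ with the orbits of the monodromy action of $\pi_1(\widetilde{\QB}_d,z^d)$ on the fiber, and compute the monodromy group to be dihedral of order $2(d-1)$; your two loops do produce $\rho$ and $\tau$ (the fixed points of $e^{i\psi}z^d$ are $0$, $\infty$, and $e^{-i\psi/(d-1)}\omega^k$ with $\omega=e^{2\pi i/(d-1)}$, so the family shifts the repelling points by one step, while $1/z$ commutes with $z^d$, swaps $0,\infty$, and reverses the circle). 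Your route avoids the quasiconformal deformation-to-center machinery and yields finer information: each $\widetilde{\QB}_{d,[\sigma]}^{fm}$ is a connected degree-$2(d-1)$ covering of $\widetilde{\QB}_d$ with dihedral monodromy. It also makes explicit a step the paper's proof leaves implicit: after deforming both maps to $z^d$, the two markings could a priori still differ by a cyclic shift of the repelling fixed points, and your rotation loop is exactly the path identifying such markings. The trade-off is that your upper bound $M\subseteq D_{d-1}$ --- the orientation-tracking step you yourself flag as the main obstacle --- is essentially a re-derivation of the continuity of $\Phi$, which the paper already established via holomorphic motions just before the lemma: continuity makes $\Phi$, hence the class in $I$, constant on connected components, so monodromy orbits are contained in the $\sim$-classes, and your lower bound $D_{d-1}\subseteq M$ then squeezes orbits equal to classes; citing that continuity would remove the most delicate part of your argument. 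Note also that both proofs rest on the connectivity of $\widetilde{\QB}_d$, which the paper records via McMullen's Proposition 5.5.

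Two details to tighten. First, the definition of $\Phi$ determines $\sigma$ only up to a cyclic shift of the entries $\sigma(3),\dots,\sigma(d+1)$, so choosing $\phi=\mathrm{id}$ does not by itself identify $p^{-1}(z^d)$ with $S_{d+1}$; you should additionally decree, say, that $\sigma(3)$ is the label of the fixed point $1$. Second, state that the monodromy acts on markings by post-composition, $(z^d;x_1,\dots,x_{d+1})\mapsto (z^d;m(x_1),\dots,m(x_{d+1}))$; with that convention your final matching of monodromy orbits with relations (1) and (2) is literally correct.
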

\begin{proof}
The surjectivity follows from the fact that ${\rm Rat}_d^{fm}$ contains all the combinations of markings of fixed points. {\color{black} More precisely, for a fixed $\sigma \in I$, there exists a quasi-Blaschke product $f$ with marked fixed points $x_1,\ldots,x_{d+1}$ such that $x_{\sigma(1)} =0,x_{\sigma(2)} =\infty$ and the set $\{x_{\sigma(3)},\ldots,x_{\sigma(d+1)} \}$ is ordered counter-clockwise on the quasi-circle. Such an $(f;x_1,\ldots,x_{d_1})$ exists as $\widetilde{\mathcal{U}\QB}_d^{fm}$ contains, for the same map $f$, all the possible ways of marking its fixed points.}

We claim that $\Phi_*$ is injective. Suppose that two maps $(f;x_1(f),x_2(f),\dots,x_{d+1}(f))$ and $(g;x_1(g),x_2(g),\dots,x_{d+1}(g))$ in $\widetilde{\mathcal{U}\QB}^{fm}_d$ with $\Phi(f)=\Phi(g)=[\sigma] \in S_{d-1}/\sim$ are normalized in such a way that they have fixed points at $0=x_{\sigma(1)}(f)=x_{\sigma(1)}(g)$ and $\infty=x_{\sigma(2)}(f)=x_{\sigma(2)}(g)$. By \cite[Corollary 3.6]{McM_AutRat} and \cite[Theorem 2.9]{McMSul_QCIII:TeichHoloDyn}, through continuous deformations fixing $0$ and $\infty$, we can deform $f$ and $g$ to be post-critically finite, i.e., $0$ and $\infty$ are degree-$d$ super-attracting fixed points. Continuous deformations preserve the cyclic ordering of repelling fixed points on the Julia sets because the collision of fixed points yields a parabolic fixed point. Then {\color{black} $f(z) = g(z) = z^d$ as $z^d$ is the only post-critically finite map having $0$ and $\infty$ as degree-$d$ super-attracting fixed points. Moreover, the markings of the fixed points of $f$ and $g$ coincide. Therefore, $\Phi_*$ is injective. This completes the proof.} 
\end{proof}

Define $\widetilde{\QB}^{fm}_{d,[\sigma]}:= \Phi^{-1}([\sigma])$. Then $\widetilde{\QB}^{fm}_{d,[\sigma]}$ is a connected component of $\widetilde{\mathcal{U}\QB}^{fm}_d$. %, and therefore is a hyperbolic component in ${\rm Rat}_d^{fm}$. %Since $|I|=\frac{(d+1)!}{2(d-1)}$, we obtain the following proposition.
\begin{prop}
We have 
    \[
        \widetilde{\mathcal{UQB}}^{fm}_d = \bigcup_{[\sigma]\in I} \widetilde{\mathcal{QB}}^{fm}_{d,[\sigma]}.
    \]
    In particular, $\widetilde{\mathcal{UQB}}_d$ has $\frac{(d+1)!}{2(d-1)}$ connected components.
\end{prop}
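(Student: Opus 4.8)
The first assertion is essentially immediate from the fiber structure of $\Phi$. Since $\Phi\from \widetilde{\mathcal{U}\QB}^{fm}_d \to I$ is defined on the whole space and $\widetilde{\QB}^{fm}_{d,[\sigma]} = \Phi^{-1}([\sigma])$, every point lies in exactly one fiber, so the equality $\widetilde{\mathcal{U}\QB}^{fm}_d = \bigcup_{[\sigma]\in I}\widetilde{\QB}^{fm}_{d,[\sigma]}$ holds at once. Lemma \ref{lem_bijective} then upgrades this to a description of the connected components: because $\Phi$ is continuous and $I$ is discrete, $\Phi$ is constant on each component, so each fiber is a union of components; injectivity of $\Phi_*$ forces this union to consist of at most one component, and surjectivity forces each fiber to be nonempty. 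Hence each $\widetilde{\QB}^{fm}_{d,[\sigma]}$ is exactly one connected component, the components are indexed bijectively by $I$, and the number of components equals $|I|$.

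It therefore remains to compute $|I| = |S_{d+1}/\sim|$, which is the only step with genuine content. The plan is to show that every equivalence class has exactly $2(d-1)$ elements. First I would fix $\sigma \in S_{d+1}$ and list all $\sigma'$ with $\sigma'\sim\sigma$. The permutations produced by condition (1) keep $(\sigma(1),\sigma(2))$ and replace the tail $(\sigma(3),\dots,\sigma(d+1))$ by one of its cyclic shifts; as the entries are distinct, these $d-1$ shifts are pairwise distinct, contributing $d-1$ permutations. Those produced by condition (2) interchange $\sigma(1)$ and $\sigma(2)$ and replace the tail by one of its $d-1$ reversed-cyclic arrangements, contributing another $d-1$. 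The two families are disjoint because they differ in the image of $1$, using $\sigma(1)\neq\sigma(2)$, so the class has exactly $(d-1)+(d-1)=2(d-1)$ elements, giving $|I| = (d+1)!/(2(d-1))$.

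The cleanest way to organize the argument—and simultaneously to see that $\sim$ is an equivalence relation—is to realize $\sim$ as the orbit equivalence of an action of the dihedral group of order $2(d-1)$ on $S_{d+1}$: cyclic shifts of the tail generate the rotation subgroup $\mathbb Z/(d-1)$, and the swap-and-reverse move of condition (2) supplies an order-two element. The main (if modest) obstacle is then to verify that this action is free, so that all orbits attain the full size $2(d-1)$ and none collapse. A nontrivial rotation cannot fix $\sigma$ since the tail entries are distinct, and no condition-(2) element can fix $\sigma$ since it interchanges $\sigma(1)$ and $\sigma(2)$; thus every stabilizer is trivial. Freeness then yields that all orbits have equal size $2(d-1)$, so $|S_{d+1}/\sim| = (d+1)!/(2(d-1))$, completing the count of connected components.
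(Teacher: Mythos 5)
Your proposal is correct and takes essentially the same route as the paper, whose entire proof is to cite Lemma \ref{lem_bijective} together with the fact that $|I|=\frac{(d+1)!}{2(d-1)}$. The extra work you do---spelling out that bijectivity of $\Phi_*$ plus continuity of $\Phi$ to the discrete set $I$ makes each fiber exactly one component, and proving the count of $|I|$ by realizing $\sim$ as the free orbit equivalence of a dihedral group of order $2(d-1)$ acting on $S_{d+1}$---is a sound filling-in of details the paper asserts without proof.
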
 
\begin{proof}
The proposition follows from Lemma \ref{lem_bijective} and the fact that $|I|=\frac{(d+1)!}{2(d-1)}$.
\end{proof}

{\color{black} Since $\widetilde{\mathcal{QB}}^{fm}_{d,[\sigma]}$ is a connected component of $\widetilde{\mathcal{U}\QB}^{fm}_d$, it is a hyperbolic component in ${\rm Rat}_d^{fm}$.}

\begin{defn}\label{def_qbd}
For any $[\sigma]\in S_{d+1}/\sim$, we define the hyperbolic component of quasi-Blaschke products associated to $[\sigma]$ in the moduli space of fixedpoint-marked rational maps by
$$\QB^{fm}_{d,[\sigma]} := \widetilde{\QB}^{fm}_{d,[\sigma]}/\PSL(2,\mathbb{C}) \subset {\rm rat}_d^{fm}.$$
By convention, we define $\widetilde{\QB}^{fm}_d:=\widetilde{\QB}^{fm}_{d,[id]}$ and $\QB^{fm}_d:=\QB^{fm}_{d,[id]}$ where $id$ is the identity element of $S_{d+1}$.
\end{defn}

\begin{lem}
$\widetilde{\QB}^{fm}_d$ and $\QB^{fm}_d$ are complex manifolds, and they are hyperbolic components of ${\rm Rat}_d^{fm}$ and ${\rm rat}_d^{fm}$, respectively.
\end{lem}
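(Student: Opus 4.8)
The plan is to establish the final lemma---that $\widetilde{\QB}^{fm}_d$ and $\QB^{fm}_d$ are complex manifolds and hyperbolic components of ${\rm Rat}_d^{fm}$ and ${\rm rat}_d^{fm}$ respectively---by assembling facts already developed in the excerpt. First I would recall that $\widetilde{\QB}^{fm}_d = \widetilde{\QB}^{fm}_{d,[id]} = \Phi^{-1}([id])$ is, by the Proposition just proved, a connected component of $\widetilde{\mathcal{UQB}}^{fm}_d$, and that the remark following that Proposition already identifies each $\widetilde{\QB}^{fm}_{d,[\sigma]}$ as a hyperbolic component of ${\rm Rat}_d^{fm}$. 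So the hyperbolic-component claim for $\widetilde{\QB}^{fm}_d$ is essentially recorded; I would state this explicitly and cite the connectedness from Lemma \ref{lem_bijective} together with the fact that $\widetilde{\mathcal{UQB}}_d^{fm}$ is the union of hyperbolic components containing $z\mapsto z^d$ with marked fixed points.

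Next I would address the manifold structure. The key input is that ${\rm Rat}_d^{fm}$ is a complex manifold by \cite[Lemma 9.2]{Milnor_HypComp}. Since $\widetilde{\QB}^{fm}_d$ is an open subset of ${\rm Rat}_d^{fm}$ (being a connected component of the open set $\widetilde{\mathcal{UQB}}_d^{fm}$, which is open because hyperbolicity and the quasi-circle condition are open conditions), it inherits a complex manifold structure. For the quotient $\QB^{fm}_d = \widetilde{\QB}^{fm}_d/\PSL(2,\mathbb{C})$ I would invoke the fact, stated earlier in Section \ref{sec_defQBd}, that the $\PSL(2,\mathbb{C})$-action on ${\rm Rat}_d^{fm}$ is free on the set of hyperbolic fixedpoint-marked rational maps; quasi-Blaschke products are hyperbolic, so the action on $\widetilde{\QB}^{fm}_d$ is free. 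I would then argue the action is also proper (the group is acting on maps with at least three distinct fixed points, where the stabilizer computation and the closedness of the graph of the action give properness), so that the quotient of a complex manifold by a free and proper holomorphic action is again a complex manifold.

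Finally, for the hyperbolic-component claim for $\QB^{fm}_d$ in ${\rm rat}_d^{fm}$, I would note that the quotient map $\widetilde{\QB}^{fm}_d \to \QB^{fm}_d$ and the quotient map ${\rm Rat}_d^{fm}\to {\rm rat}_d^{fm}$ are compatible, and that passing to the quotient preserves both openness (the quotient map is open) and connectedness (continuous image of a connected set); the set of hyperbolic maps in ${\rm rat}_d^{fm}$ is the image of the hyperbolic locus upstairs, so $\QB^{fm}_d$ is a connected component of it, i.e.\ a hyperbolic component.

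\medskip

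I expect the main obstacle to be the properness of the $\PSL(2,\mathbb{C})$-action, which is what upgrades freeness into a genuine manifold quotient. The excerpt only states that the action is free on hyperbolic fixedpoint-marked maps, so I would need to verify that orbits are closed and the action has no escaping sequences with bounded image---this follows because a sequence of M\"obius transformations conjugating one fixedpoint-marked quasi-Blaschke product near another, while fixing a configuration of at least three marked points, is forced to stay in a compact subset of $\PSL(2,\mathbb{C})$. Everything else is bookkeeping with openness, connectedness, and the already-cited results of Milnor and McMullen.
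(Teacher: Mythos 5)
Your proposal is correct and follows essentially the same route as the paper: openness of $\widetilde{\mathcal{U}\QB}^{fm}_d$ in the complex manifold $\Rat_d^{fm}$ gives the manifold structure upstairs, freeness of the $\PSL(2,\mathbb{C})$-action (since quasi-Blaschke products have at least three distinct fixed points) gives the quotient, and the hyperbolic-component statements are inherited from the discussion preceding the lemma. You are in fact more careful than the paper, whose proof invokes only freeness; your properness argument---that Möbius transformations conjugating nearby fixedpoint-marked quasi-Blaschke products are controlled by where they send three distinct marked fixed points, hence stay in a compact subset of $\PSL(2,\mathbb{C})$---supplies exactly the justification, left implicit in the paper, that the free quotient is indeed a manifold.
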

\begin{proof}
We claim that $\widetilde{\mathcal{QB}_d}^{fm}$ is a complex manifold. \textcolor{black}{Recall that $\widetilde{\mathcal{UQB}_d}^{fm}$ is an open subset of $\Rat_{d}^{fm}$ which is a complex manifold. Hence, } $\widetilde{\mathcal{UQB}_d}^{fm}$ and $\widetilde{\mathcal{QB}_d}^{fm}$ are also complex manifolds.
We claim that $\QB_d^{fm}$ is a complex manifold without any orbifold singular points. Indeed, since any quasi-Blaschke product has at least three fixed points, the $\PSL(2,\mathbb{C})$-action on $\widetilde{\mathcal{U}\QB}_d^{fm}$ is free. Therefore $\QB_d^{fm}$ is a complex manifold.
Moreover, $\QB_d^{fm}$ is a hyperbolic component in ${\rm rat}_d^{fm}$ as $\widetilde{\QB}_d^{fm}$ is a hyperbolic component in ${\rm Rat}_d^{fm}$.
\end{proof}

% {\color{black} Since any quasi-Blaschke product has at least three fixed points, the $\PSL(2,\mathbb{C})$-action on $\widetilde{\mathcal{U}\QB}_d^{fm}$ is free.}
% {\color{black} Therefore, $\QB_d^{fm}$ is a complex manifold without any orbifold singular points. Moreover, it is a hyperbolic component in ${\rm rat}_d^{fm}$ as $\widetilde{\QB}_d^{fm}$ is a hyperbolic component in ${\rm Rat}_d^{fm}$.}

% {\tiny As noted above, $\widetilde{\QB}^{fm}_d$ is a hyperbolic component in ${\rm Rat}_d^{fm}$.{\color{black} Here has a math problem.}{\color{olive} (Maybe earlier in this section we define hyperbolicity of fixedpoint-marked rational maps. We say that $[(f,x_1,x_2,\dots,x_{d+1})]\in \QB_d^{fm}$ is {\it hyperbolic} if $f$ is a hyperbolic rational map. A hyperbolic component in ${\rm Rat}_d^{fm}$ is a connected component of the subset of hyperbolic fixedpoint-marked rational maps. Since the projection $p\from {\rm Rat}_d^{fm} \to {\rm Rat}_d$ is a branched covering, hyperbolic components $H$ in ${\rm Rat}_d$ are in 1-1 correspondence with hyperbolic components $p^{-1}(H)$ in ${\rm Rat}^{fm}_d$.} Since any quasi-Blaschke product has at least three fixed points, the $\PSL(2,\mathbb{C})$-action on $\widetilde{\mathcal{U}\QB}_d^{fm}$ is free and $\QB_d^{fm}$ is a complex manifold without any orbifold singular points.
% }

\medskip

%Since $\widetilde{\QB}_d=\widetilde{\QB}_{d,[id]}$, 
For any $[(f,x_1,x_2,\dots,x_{d+1})]\in \QB_d^{fm}$ there exists a unique representative $f$ in the conjugacy class such that $x_1=0,x_2=\infty$, and $x_3=1$. Then $f$ is of the form
\[
    f(z)=Q_{\bf{a},\bf{b}}(z):=\left(\prod_{j=1}^{d-1} \frac{1+b_j}{1+a_j}\right) z \prod_{j=1}^{d-1}\frac{z+a_j}{1+b_j z},
\]
where ${\bf a}:=(a_1,a_2,\dots,a_{d-1}) \in \mathbb{C}^{d-1}$ and ${\bf b}:=(b_1,b_2,\dots,b_{d-1}) \in \mathbb{C}^{d-1}$. The marking of the fixed points of $Q_{\bf{a},\bf{b}}$ is given by $x_0=0$, $x_1=\infty$ and $\{x_2=1,x_3,\dots,x_d\}$, where the set $\{x_2=1,x_3,\dots,x_d\}$ is counter-clockwisely ordered on the quasi-circle Julia set of $Q_{\bf{a},\bf{b}}$.
This gives rise to a holomorphic embedding $\Psi\from \QB_d^{fm} \to \mathbb{C}^{d-1}/S_{d-1} \times \mathbb{C}^{d-1}/S_{d-1}$ defined by $\Psi(Q_{\bf{a},\bf{b}})=([\bf{a}],[\bf{b}])$. More precisely,
\[
    \Psi\from \left[\left(\prod_{j=1}^{d-1} \frac{1+b_j}{1+a_j}\right) z \prod_{j=1}^{d-1}\frac{z+a_j}{1+b_j z}\right] \mapsto \left([(a_1,a_2,\dots,a_{d-1})],[(b_1,b_2,\dots,b_{d-1})]\right).
\]
We remark that $\Psi$ is not surjective, i.e., $Q_{\bf{a},\bf{b}}$ may not be a quasi-Blaschke product for an arbitrary pair ${\bf a},{\bf b} \in \mathbb{C}^{d-1}$. For any vector ${\bf v}=(v_1,v_2,\dots, v_{d-1})\in \mathbb{C}^{d-1}$, define  $\bar{\bf v}$ to be the entry-wise complex conjugate of ${\bf v}$, i.e., $\overline{\bf v}=(\overline{v_1},\overline{v_2},\dots,\overline{v_{d-1}})$.

\begin{lem}\label{lem:involution}
The map $Q_{{\bf a},{\bf b}}(z)$ is a quasi-Blaschke product if and only if $Q_{\overline{\bf b},\overline{\bf a}}(z)$ is a quasi-Blaschke product. Moreover, $Q_{{\bf a},{\bf b}}(z)$ is in the hyperbolic component $\mathcal{QB}_d^{fm}$ if and only if $Q_{\overline{\bf b},\overline{\bf a}}(z)$ is in the same hyperbolic component $\mathcal{QB}_d^{fm}$.
\end{lem}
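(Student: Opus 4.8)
\emph{The plan} is to realize the correspondence $Q_{\mathbf{a},\mathbf{b}} \mapsto Q_{\overline{\mathbf{b}},\overline{\mathbf{a}}}$ as conjugation by the anti-conformal involution of $\widehat{\mathbb{C}}$ given by reflection in the unit circle. Let $R \from \widehat{\mathbb{C}} \to \widehat{\mathbb{C}}$ be $R(z) = 1/\overline{z}$, an anti-conformal involution fixing $\mathbb{S}^1$ pointwise and interchanging $0$ and $\infty$. First I would verify by direct computation that
\[
    R \circ Q_{\mathbf{a},\mathbf{b}} \circ R = Q_{\overline{\mathbf{b}},\overline{\mathbf{a}}}.
\]
Computing $Q_{\mathbf{a},\mathbf{b}}(1/\overline{z})$, taking the complex conjugate, and inverting shows that the leading constant $\prod_j (1+b_j)/(1+a_j)$ transforms into $\prod_j (1+\overline{a_j})/(1+\overline{b_j})$ while each factor $(z+a_j)/(1+b_j z)$ becomes $(z+\overline{b_j})/(1+\overline{a_j}z)$; this is routine but essential algebra.

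For the first assertion, I would then use that $R$ is an anti-conformal (hence $1$-quasiconformal) involution of $\widehat{\mathbb{C}}$, so it carries quasicircles to quasicircles and preserves the moduli of multipliers at fixed and periodic points. Conjugation by $R$ therefore sends $Q_{\mathbf{a},\mathbf{b}}$ to a degree-$d$ rational map whose Julia set is $R(\Julia(Q_{\mathbf{a},\mathbf{b}}))$ and whose Fatou components are the $R$-images of those of $Q_{\mathbf{a},\mathbf{b}}$. Consequently, hyperbolicity is preserved (the conjugate map is still uniformly expanding on its Julia set), the Julia set is a quasicircle if and only if the original one is, and the two Fatou components are individually fixed if and only if they were for $Q_{\mathbf{a},\mathbf{b}}$ (the $R$-image of a fixed component is fixed by the conjugate map). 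Hence $Q_{\mathbf{a},\mathbf{b}}$ is a quasi-Blaschke product if and only if $Q_{\overline{\mathbf{b}},\overline{\mathbf{a}}}$ is; the reverse implication is automatic as $\iota(f):=R\circ f\circ R$ is an involution.

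For the second assertion, I would view $\iota$ as a continuous involution of $\widetilde{\mathcal{UQB}}_d^{fm}$ --- in the $(\mathbf{a},\mathbf{b})$ coordinates it is simply $(\mathbf{a},\mathbf{b}) \mapsto (\overline{\mathbf{b}},\overline{\mathbf{a}})$, a homeomorphism, and it descends to $\mathrm{rat}_d^{fm}$ since $\iota(\phi f \phi^{-1}) = (R\phi R)\,\iota(f)\,(R\phi R)^{-1}$ with $R\phi R \in \PSL(2,\mathbb{C})$. By the first part $\iota$ preserves the set of quasi-Blaschke products, hence it permutes the connected components $\widetilde{\mathcal{QB}}_{d,[\sigma]}^{fm}$ classified in Lemma \ref{lem_bijective}. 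To pin down that $\iota$ fixes $\widetilde{\mathcal{QB}}_{d}^{fm}=\widetilde{\mathcal{QB}}_{d,[\mathrm{id}]}^{fm}$, it suffices to exhibit one $\iota$-fixed point in it: the power map $z^d = Q_{\mathbf{0},\mathbf{0}}$ satisfies $\iota(z^d) = Q_{\overline{\mathbf{0}},\overline{\mathbf{0}}} = z^d$, its Julia set is $\mathbb{S}^1$ on which $R$ is the identity, so $\iota$ fixes it together with its fixed-point marking. Since $z^d \in \widetilde{\mathcal{QB}}_{d,[\mathrm{id}]}^{fm}$ and $\iota$ is a homeomorphism permuting components, we get $\iota\big(\widetilde{\mathcal{QB}}_{d,[\mathrm{id}]}^{fm}\big) = \widetilde{\mathcal{QB}}_{d,[\mathrm{id}]}^{fm}$, which yields the claim after passing to $\mathcal{QB}_d^{fm} = \widetilde{\mathcal{QB}}_{d,[\mathrm{id}]}^{fm}/\PSL(2,\mathbb{C})$.

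I expect the main obstacle to be the orientation bookkeeping behind component-preservation: one must ensure $\iota$ does not carry the $[\mathrm{id}]$ component to a reversed-order component $[\sigma]$. The connectivity argument above avoids an explicit cyclic-order computation, but it hinges on $z^d$ \emph{together with its marking} being a genuine $\iota$-fixed point of $\widetilde{\mathcal{QB}}_{d,[\mathrm{id}]}^{fm}$, and verifying that the marking (not merely the unmarked map) is preserved is the delicate point. Alternatively, one can argue directly that $R$ preserves the counter-clockwise cyclic order of the repelling fixed points on the Julia set: although $R$ reverses the orientation of $\widehat{\mathbb{C}}$, it also interchanges the two Fatou components (since it swaps $0$ and $\infty$), and these two reversals cancel, so the order about the component containing $0$ is preserved, giving $\Phi(Q_{\overline{\mathbf{b}},\overline{\mathbf{a}}}) = [\mathrm{id}]$.
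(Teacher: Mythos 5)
Your computation $R \circ Q_{\mathbf{a},\mathbf{b}} \circ R = Q_{\overline{\mathbf{b}},\overline{\mathbf{a}}}$ with $R(z)=1/\bar{z}$, and the deduction of the first assertion from it, are correct and coincide with the paper's own proof, which is exactly conjugation by this anti-holomorphic involution.

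The main route you propose for the second assertion, however, fails at precisely the point you flag as delicate, and it fails genuinely rather than just needing more care. With the transported marking, $\iota(f;x_1,\ldots,x_{d+1})=(RfR;R(x_1),\ldots,R(x_{d+1}))$, the power map is \emph{not} a fixed point: $R$ is indeed the identity on $\Julia(z^d)=\mathbb{S}^1$, so the repelling fixed points are fixed, but the first two marked points are the attracting fixed points $0$ and $\infty$, which do not lie on the Julia set and are interchanged by $R$. Thus $\iota\bigl(z^d;0,\infty,1,\omega,\ldots,\omega^{d-2}\bigr)=\bigl(z^d;\infty,0,1,\omega,\ldots,\omega^{d-2}\bigr)$, where $\omega=e^{2\pi i/(d-1)}$, which is a different marked map. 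Worse, your own orientation observation (the two reversals cancel, so $R$ \emph{preserves} the counter-clockwise cyclic order of the repelling fixed points) shows that the transported marking of any point of $\widetilde{\QB}_{d,[\mathrm{id}]}^{fm}$ has the two attracting labels swapped while the cyclic order of the rest is unchanged; hence $\iota$ carries $\widetilde{\QB}_{d,[\mathrm{id}]}^{fm}$ into $\widetilde{\QB}_{d,[(12)]}^{fm}$. In the paper's equivalence relation on $S_{d+1}$, swapping $\sigma(1)$ and $\sigma(2)$ is only identified when it comes with a \emph{reversal} of the cyclic order, so $[(12)]\neq[\mathrm{id}]$ as soon as $d\ge 4$ (for $d=2,3$ they coincide, which is why nothing looks wrong in low degree). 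So the transported-marking involution genuinely permutes components nontrivially, and no connectivity argument can show it fixes $\widetilde{\QB}_{d,[\mathrm{id}]}^{fm}$. For the same reason, the conclusion $\Phi(Q_{\overline{\mathbf{b}},\overline{\mathbf{a}}})=[\mathrm{id}]$ in your alternative argument is not what the transported marking gives; it gives $[(12)]$.

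What makes the lemma true is that ``$Q_{\overline{\mathbf{b}},\overline{\mathbf{a}}}$ is in $\QB_d^{fm}$'' refers to $Q_{\overline{\mathbf{b}},\overline{\mathbf{a}}}$ equipped with its \emph{own} canonical marking ($0$ first, $\infty$ second, $1$ third, the remaining repelling fixed points in counter-clockwise order) --- this is how $\Psi$ identifies maps with points of $\QB_d^{fm}$ --- and not with the $R$-transported marking. The repair is then short: conjugation by $R$ shows that $Q_{\overline{\mathbf{b}},\overline{\mathbf{a}}}$ is a quasi-Blaschke product whose attracting fixed points form the set $\{0,\infty\}$ (since $R$ swaps $0$ and $\infty$ and preserves attraction) and which fixes $1$ on its Julia set; the canonically marked map then has $\Phi=[\mathrm{id}]$ by the very definition of $\Phi$, hence lies in $\QB_d^{fm}$. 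If you want to keep your connectivity scheme, replace $\iota$ by $\hat{\iota}(f;x_1,x_2,x_3,\ldots):=(RfR;R(x_2),R(x_1),R(x_3),\ldots)$, which compensates for the swap: $\hat{\iota}$ honestly fixes $z^d$ together with its marking, is a continuous involution permuting components, and therefore preserves $\widetilde{\QB}_{d,[\mathrm{id}]}^{fm}$; your cyclic-order observation is then exactly what identifies $\hat{\iota}$ in the $\Psi$-coordinates with $([\mathbf{a}],[\mathbf{b}])\mapsto([\overline{\mathbf{b}}],[\overline{\mathbf{a}}])$, recovering the statement and matching the role that ``preserves the cyclic ordering of repelling fixed points'' plays in the paper's one-line proof.
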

\begin{proof}
    The lemma follows from the facts that $Q_{{\bf a},{\bf b}}(z)$ and $Q_{\overline{\bf b},\overline{\bf a}}(z)$ are conjugate by the anti-holomorphic involution $z \mapsto \frac{1}{\bar{z}}$, and that the conjugation fixes $0,1,\infty$ and preserves the cyclic ordering of repelling fixed points on the Julia sets.
\end{proof}

It follows from Lemma \ref{lem:involution} that the image $\Psi(\QB_d^{fm})$ is invariant under the anti-holomorphic involution $\iota\from \mathbb{C}^{d-1}/S_{d-1} \times \mathbb{C}^{d-1}/S_{d-1} \to \mathbb{C}^{d-1}/S_{d-1} \times \mathbb{C}^{d-1}/S_{d-1}$ defined by
\[
    \iota([{\bf a}],[{\bf b}])= ([\bar{{\bf b}}],[\bar{{\bf a}}]).
\]

Define $\Bcal_d^{fm} \subset \QB_d^{fm}$ by
\[
    \Bcal_d^{fm} : =\left\{[f] \in \QB_d^{fm} ~|~ f \text{ is a Blaschke product} \right\}.
\]
%\todo[inline]{1. Maybe we should decorate $\QB_d$ and $\Bcal_d$ with $^{fm}$.\\ 2. In ${\rm rad_d}$ there is a hyperbolic component of quasi-Blaschke product $\QB_d$ and the Blaschke products $\Bcal_d$ lie inside therein. Then $(\QB_d^{fm},\Bcal_d)$ is a covering of $(\QB_d,\Bcal_d^{fm})$. So I think we may only consider $\Bcal_d^{fm}$ that is the subset of $\QB_d^{fm}$ consisting of Blashke products.} }
Then $\Psi(\Bcal_d^{fm})$ is the locus of fixed points of $\iota$.
We have the following decomposition of the tangent spaces of $\QB_d^{fm}$ at points in $\Bcal_d^{fm}$.

\begin{lem}\label{lem_3.9}
Fix $[f] \in \Bcal_d^{fm}$. Then we have
$$T_{[f]}\mathcal{\QB}_d^{fm} = T_{[f]} \Bcal_d^{fm} \oplus J\cdot T_{[f]} \Bcal_d^{fm}$$
where $J$ is the (almost) complex structure on ${\rm rat}_d^{fm}$.
\end{lem}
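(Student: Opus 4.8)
The plan is to realize $\Bcal_d^{fm}$ as the fixed-point set of an anti-holomorphic involution on $\QB_d^{fm}$ and then invoke a standard eigenspace argument, exactly mirroring the quasi-Fuchsian decomposition recorded in Equation \eqref{eqn:DecompTQF}. Recall from the discussion preceding the statement that $\Psi \from \QB_d^{fm} \to \Cbb^{d-1}/S_{d-1} \times \Cbb^{d-1}/S_{d-1}$ is a holomorphic embedding whose image is invariant under the anti-holomorphic involution $\iota([{\bf a}],[{\bf b}]) = ([\bar{\bf b}],[\bar{\bf a}])$, and that $\Psi(\Bcal_d^{fm})$ is precisely the fixed-point locus of $\iota$ inside $\Psi(\QB_d^{fm})$. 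First I would transport $\iota$ through $\Psi$: since $\Psi(\QB_d^{fm})$ is $\iota$-invariant, the map $\tilde\iota := \Psi^{-1}\circ \iota \circ \Psi$ is a well-defined involution of $\QB_d^{fm}$, and its fixed-point set is exactly $\Bcal_d^{fm}$. As $\QB_d^{fm}$ is an open (hence complex) submanifold of ${\rm rat}_d^{fm}$ with no orbifold points and $\Psi$ is holomorphic, $\tilde\iota$ is anti-holomorphic with respect to the intrinsic complex structure $J$.

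Next I would pass to the differential at the fixed point $[f]$. Write $L := d\tilde\iota_{[f]} \from T_{[f]}\QB_d^{fm} \to T_{[f]}\QB_d^{fm}$. Because $\tilde\iota$ is an involution, $L^2 = \mathrm{id}$, so $T_{[f]}\QB_d^{fm}$ splits as a direct sum $E_+ \oplus E_-$ of the $(+1)$- and $(-1)$-eigenspaces of $L$. Since the tangent space to the fixed-point submanifold of an involution is the $(+1)$-eigenspace of its differential, we have $E_+ = T_{[f]}\Bcal_d^{fm}$. Because $\tilde\iota$ is anti-holomorphic, $L$ is complex-anti-linear, i.e., $L\circ J = -J\circ L$.

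The heart of the argument is then the linear-algebra fact that a complex-anti-linear involution interchanges its eigenspaces under $J$. Concretely, for $v \in E_+$ we compute $L(Jv) = -J(Lv) = -Jv$, so $Jv \in E_-$; symmetrically, $J$ carries $E_-$ into $E_+$. As $J$ is invertible, these inclusions force $\dim E_+ = \dim E_-$ and $J\cdot E_+ = E_-$. Substituting $E_+ = T_{[f]}\Bcal_d^{fm}$ yields
\[
    T_{[f]}\QB_d^{fm} = E_+ \oplus E_- = T_{[f]}\Bcal_d^{fm} \oplus J\cdot T_{[f]}\Bcal_d^{fm},
\]
as desired.

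I expect the only genuine subtlety to be the bookkeeping of complex structures across the embedding $\Psi$: one must confirm that $\tilde\iota$ is anti-holomorphic for the intrinsic $J$ on ${\rm rat}_d^{fm}$ and that $J$ preserves $T_{[f]}\QB_d^{fm}$. Both are immediate once one records that $\QB_d^{fm}$ is open in the complex manifold ${\rm rat}_d^{fm}$, that $\Psi$ is a holomorphic embedding, and that the model involution on $\Cbb^{d-1}/S_{d-1}\times\Cbb^{d-1}/S_{d-1}$ is anti-holomorphic (being conjugation in each factor). Everything else is the formal eigenspace computation above, so no delicate estimates are involved.
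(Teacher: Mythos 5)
Your proposal is correct and follows essentially the same route as the paper's proof: both realize $\Bcal_d^{fm}$ as the fixed locus of the anti-holomorphic involution $\iota$ transported through the holomorphic embedding $\Psi$, and then apply the eigenspace decomposition of a complex-anti-linear involution to split $T_{[f]}\QB_d^{fm}$ as $T_{[f]}\Bcal_d^{fm}\oplus J\cdot T_{[f]}\Bcal_d^{fm}$. The only difference is presentational: you spell out the computation $L(Jv)=-J(Lv)$ showing $J$ exchanges the eigenspaces, which the paper states more tersely.
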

\begin{proof}
    Denote by $(\mathbb{R}^{2n},\tilde{J})$ a real $2n$-dimensional vector space with an almost complex structure $\tilde{J}$. Suppose that $\phi\in {\rm GL}(2n,\mathbb{R})$ is an anti-holomorphic involution of $(\mathbb{R}^{2n},\tilde{J})$, i.e., $\phi\circ \phi=id$ and $\phi(\tilde{J}\cdot v)=-\tilde{J}\cdot \phi(v)$. Since involutions are diagonalizable, $\mathbb{R}^{2n}$ is decomposed into $V\oplus \tilde{J}\cdot V$ where $V$ and $\tilde{J}\cdot V$ are eigen-spaces of $\iota$ having eigen-values $1$ and $-1$, respectively.

%Denote by $J$ the almost complex structure on ${\rm rat}_d^{fm}$ and 
Denote by $J_1$ the almost complex structure on $\mathbb{C}^{d-1}/S_{d-1} \times \mathbb{C}^{d-1}/S_{d-1}$. Since $\Psi$ is biholomorphic onto its image, we can identify $(\QB_d^{fm},J)$ with its image $(\Psi(\QB_d^{fm}),J_1)$.

Since {\color{black} the differential $D\iota$} defines an anti-holomorphic involution of the tangent space \\
$(T_{\Psi([f])}\Psi(\mathcal{\QB}_d^{fm}),J_1) = (T_{[f]}\mathcal{\QB}_d^{fm}, J)$
%{\tiny on $T_{[f]}\mathcal{\QB}_d^{fm}$} 
such that the subspace $T_{[f]} \Bcal_d^{fm}$ coincides with the locus of fixed points of $D\iota$, the lemma follows from the first paragraph.
\end{proof}

\subsection*{Analogues of pure shearing and pure bending tangent vectors}
The decomposition of the tangent space of quasi-Blaschke products in Lemma \ref{lem_3.9} is analogous to that of quasi-Fuchsian groups in Equation \eqref{eqn:DecompTQF}. However, geometric interpretations of tangent vectors in $T_{[f]}\Bcal_d^{fm}$ and $J \cdot T_{[f]}\Bcal_d^{fm}$ are unknown.

\subsection{Degeneracy locus of $||\cdot||_\Pcal$ in $\QB_d^{fm}$}\label{sec_3.2}
We continue to use notations defined in Section \ref{sec_defQBd}. The construction in Section \ref{sec:Survey} of the pressure  semi-norms also defines a pressure  semi-norms on a hyperbolic component of ${\rm Rat}_d^{fm}$.
Since $\QB_d^{fm}$ is a hyperbolic component, the pressure  {\color{black} semi-norm} $||\cdot||_\Pcal$ is defined on tangent spaces of $\QB_d^{fm}$. In this section, we study the degeneracy locus of $||\cdot||_\Pcal$ on $\QB_d^{fm}$.

%In section 2, the metric is defined in ${\rm Rat}_d$ and ${\rm rat}_d$. Mention that ${\rm Rat}_d^{fm}$ is a finite-degree covering of ${\rm Rat}_d$ so that the construction in Section 2 works similarly to define the pressure metric in $\QB_d^{fm}$.}
 
\begin{thm}\label{thm_main}
Fix $[f]\in \QB_d^{fm}$ and $0\neq v\in T_{[f]} \QB_d^{fm} $. The following properties hold.
\begin{enumerate}
    \item If $[f] \notin  \Bcal_d^{fm}$, then $||v||_\Pcal \neq 0$.
    \item If $[f]\in \Bcal_d^{fm}$ and $v \in J \cdot T_{[f]} \Bcal_d^{fm}$, then $||v||_\Pcal=0$.
    \item Suppose $[f]\in \Bcal_d^{fm}$ and $||\cdot ||_\Pcal$ is non-degenerate in $T_{[f]}\Bcal_d^{fm}$. If $||v||_\Pcal=0$ for some $v\in T_{[f]}\QB_d^{fm}$, then $v\in J\cdot T_{[f]}\Bcal_d^{fm}$.
\end{enumerate}
\end{thm}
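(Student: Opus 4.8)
The three parts share one engine: the multiplier criterion of Proposition~\ref{Prop:DegenerateVector}, which says $\|v\|_\Pcal=0$ if and only if $\frac{d}{dt}\big|_{t=0}\,\delta(f_t)\log|\lambda_C(f_t)|=0$ for every repelling cycle $C$, along any curve $(f_t)$ representing $v$. The plan is to feed this criterion with the symmetry supplied by the anti-holomorphic involution $\iota$ of Lemma~\ref{lem:involution}. Recall that $\iota$ is realized by conjugating $Q_{{\bf a},{\bf b}}$ with the anti-holomorphic map $z\mapsto 1/\bar z$, which is a spherical isometry and, near $\Bcal_d^{fm}$, fixes the unit-circle Julia set pointwise. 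First I would record two consequences. Since conjugation by an anti-holomorphic map replaces each cycle multiplier by its complex conjugate, and since the repelling cycles can be labelled consistently over the connected component $\QB_d^{fm}$ by holomorphic motion from a basepoint in $\Bcal_d^{fm}$, each multiplier defines a function $\lambda_C\from \QB_d^{fm}\to\Cbb$ with $\lambda_C\circ\iota=\overline{\lambda_C}$. Since $\iota$ preserves Hausdorff dimension, $\delta\circ\iota=\delta$. Hence the function $L_C:=\delta\log|\lambda_C|$ is $\iota$-invariant: $L_C\circ\iota=L_C$.

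Part (2) then falls out by differentiating this invariance. Fix $[f]\in\Bcal_d^{fm}$, so $\iota([f])=[f]$, and differentiate $L_C\circ\iota=L_C$ at $[f]$ to get $dL_C|_{[f]}\circ D\iota=dL_C|_{[f]}$. For $v\in J\cdot T_{[f]}\Bcal_d^{fm}$ we have $D\iota(v)=-v$, as this is the $(-1)$-eigenspace of $D\iota$ (see the proof of Lemma~\ref{lem_3.9}); therefore $dL_C|_{[f]}(v)=-dL_C|_{[f]}(v)$, forcing $dL_C|_{[f]}(v)=0$ for every repelling cycle $C$. Proposition~\ref{Prop:DegenerateVector} now gives $\|v\|_\Pcal=0$.

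For part (3) I would use only that $\langle\cdot,\cdot\rangle_\Pcal$ is positive semi-definite together with part (2). Decompose $v=v_++v_-$ according to Lemma~\ref{lem_3.9}, with $v_+\in T_{[f]}\Bcal_d^{fm}$ and $v_-\in J\cdot T_{[f]}\Bcal_d^{fm}$. By part (2), $\langle v_-,v_-\rangle_\Pcal=\|v_-\|_\Pcal^2=0$, so the Cauchy--Schwarz inequality for the semi-definite form $\langle\cdot,\cdot\rangle_\Pcal$ gives $\langle v_+,v_-\rangle_\Pcal=0$. Expanding, $\|v\|_\Pcal^2=\langle v_+,v_+\rangle_\Pcal=\|v_+\|_\Pcal^2$. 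If $\|v\|_\Pcal=0$ then $\|v_+\|_\Pcal=0$; since $v_+\in T_{[f]}\Bcal_d^{fm}$ and $\|\cdot\|_\Pcal$ is assumed non-degenerate there, $v_+=0$, whence $v=v_-\in J\cdot T_{[f]}\Bcal_d^{fm}$.

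Part (1) is where the real work lies, and I would approach it through Theorem~\ref{thm_1.1} rather than the involution, since the point $[f]$ is no longer fixed by $\iota$. The idea is to argue the contrapositive: if $[f]\in\QB_d^{fm}\setminus\Bcal_d^{fm}$, then the Julia set $\Julia(f)$ is a quasi-circle that is \emph{not} a round circle, because a quasi-Blaschke product whose Julia set is a round circle and which fixes both Fatou components is conformally conjugate to a genuine Blaschke product, i.e.\ lies in $\Bcal_d^{fm}$. The hard part will be the rigidity input identifying round circles among quasi-circle Julia sets by Hausdorff dimension: a theorem of Bowen--Zdunik type (cf.\ \cite{Bowen79_Markov}) shows that a quasi-circle Julia set of Hausdorff dimension $1$ must be a round circle. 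Granting this, $\Julia(f)$ not round forces $\delta(f)>1$, while $\delta(f)<2$ holds since quasi-circles have Hausdorff dimension strictly below $2$; thus $\delta(f)\in(1,2)$, and Theorem~\ref{thm_1.1} yields $\|v\|_\Pcal\neq0$ for every nonzero $v\in T_{[f]}\QB_d^{fm}$. Verifying the precise form of this rigidity statement for our family, and confirming that the holomorphic-motion labelling of cycles used above is globally consistent on $\QB_d^{fm}$, are the two technical points I expect to require the most care.
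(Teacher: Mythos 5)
Your proposal is correct, and for parts (2) and (3) it takes a genuinely different route from the paper, while part (1) coincides with the paper's argument: the paper also deduces (1) from Theorem~\ref{thm_1.1} via the rigidity fact that $\delta(f)>1$ whenever $[f]\in\QB_d^{fm}\setminus\Bcal_d^{fm}$ (with $\delta(f)<2$ implicit since $\Julia(f)$ is a quasi-circle). For part (2), the paper never touches the involution $\iota$: it introduces the complex logarithm $\Lcal_C=\log\lambda_C$ of the multiplier, uses its \emph{holomorphicity} to write $D\Lcal_C(J\cdot w)=i\,D\Lcal_C(w)$, and uses that $D\Lcal_C(w)$ is real for $w\in T_{[f]}\Bcal_d^{fm}$ (repelling multipliers of Blaschke products are real) to get $DL_C(J\cdot w)=\Re\bigl(i\cdot\Rbb\bigr)=0$, finishing with Proposition~\ref{Prop:DegenerateVector}. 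Your differentiation of the identity $L_C\circ\iota=L_C$ at a fixed point of $\iota$, using that $J\cdot T_{[f]}\Bcal_d^{fm}$ is the $(-1)$-eigenspace of $D\iota$ (Lemma~\ref{lem_3.9}), reaches the same vanishing by symmetry instead of holomorphicity; the two mechanisms are close cousins, since the reality of multipliers along $\Bcal_d^{fm}$ is exactly the fixed-locus manifestation of your relation $\lambda_C\circ\iota=\overline{\lambda_C}$ from Lemma~\ref{lem:involution}, but your version needs only smoothness of $\delta\log|\lambda_C|$, and by building $\delta$ into $L_C$ it also absorbs the step (separate in the paper) that $D\delta=0$ along the Blaschke locus. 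For part (3), the paper redoes the multiplier computation: it shows $DL_C(v)=DL_C(v_1)$ for the component $v_1\in T_{[f]}\Bcal_d^{fm}$, deduces $DL_C(v_1)=0$ from $\|v\|_\Pcal=0$ and Proposition~\ref{Prop:DegenerateVector}, and then uses non-degeneracy on $T_{[f]}\Bcal_d^{fm}$; your Cauchy--Schwarz argument for the positive semi-definite form $\langle\cdot,\cdot\rangle_\Pcal$ is shorter and purely linear-algebraic, at the cost of taking part (2) as input and of using that the semi-norm is the quadratic form of a bilinear form, which the paper's thermodynamic framework does provide (the pullback of ${\rm Cov}$), so this is legitimate. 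Finally, your stated worry about \emph{global} consistency of the cycle labelling on $\QB_d^{fm}$ is not actually needed: to differentiate $L_C$ at $[f]$ one only needs the local conjugacy $\phi_{f,g}$ on the neighborhood $U(f)$ that the paper already uses, and its $\iota$-equivariance near the fixed point $[f]$ follows from uniqueness of the continuation, since $z\mapsto1/\bar z$ fixes the cycles of $f$ on the unit circle pointwise.
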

\begin{proof}
(1) follows from Theorem \ref{thm_1.1} because for $[f] \notin  \Bcal_d^{fm}$, the Hausdorff dimension of the Julia set $\delta(f)$ is strictly bigger than $1$.

To prove (2) and (3), we use similar arguments as in \cite[Section 7]{Bridgeman_WPMetricQF}. Let us first prove (2). Fix $[f] \in \Bcal_d^{fm}$. Recall that there is a neighborhood $U(f)$ of $f$ on which the conjugacy $\phi_{f,g} \from \Julia(f) \to \Julia(g)$ is well-defined. For a repelling cycle $C$ of $f$, we define
$L_C\from U(f) \to \Rbb$ by
$$L_C([g]):=\log{|\lambda_C(g)|}$$
where $\lambda_C(g)$ is the multiplier of the cycle $\phi_{f,g}(C)$ in $\Julia(g)$. Similarly, we define $\Lcal_C\from U(f) \to \Cbb$ by
\[
\Lcal_C([g]):=\log{\lambda_C(g)}
\]
so that $L_C=\Re(\Lcal_C)$ where $\Re$ denotes the real part of a complex number.

Fix $v\in J\cdot T_{[f]}\Bcal_d^{fm}$, i.e., $v=J\cdot w$ for some $w\in T_{[f]}\Bcal_d^{fm}$. Denote by $D L_C(v)$ and $D \Lcal_C(v)$ the derivatives of $L_C$ and $\Lcal_C$ along the tangent vector $v$, respectively. Then, for any repelling cycle $C$, we have
\begin{align*}
    D L_C(v)=\Re(D \Lcal_C(v))&=\Re( D\Lcal_C(J\cdot w)) &\\
    &= \Re(i\cdot D \Lcal_C(w))& ({\rm\because \Lcal_C~is~holomorphic})\\
    &=\Re(i \cdot D L_C(w))& (\because D L_C(w)=D \Lcal_C(w)\in\Rbb)\\
    &=0.&
\end{align*}
Choose a smooth path $\{[f_t]\}_{t \in (-\epsilon,\epsilon)}$ in $\Bcal_d^{fm}$ representing $v$; namely, $\{[f_t]\}$ is such that
$[f_0]=[f]$ and $v=\frac{d}{dt}|_{t=0}[f_t]$.  Since $\delta(f_t)=1$ for any $t\in(-\epsilon,\epsilon)$, we have
\[
    \left.\frac{d}{dt}\right|_{t=0} -\delta(f_t) \log|f_t'| = \left.\frac{d}{dt}\right|_{t=0}-\log|f_t'|=0.
\]
Then by Proposition \ref{Prop:DegenerateVector} we obtain  $||v||_{\mathcal{P}} =0.$

Finally, we prove Statement (3). Fix $[f] \in \QB_d^{fm}$ and $v \in T_{[f]}\QB_d^{fm}$ with $||v||_\Pcal=0$. Then by Statement (1), $[f] \in \Bcal_d^{fm}$. By Lemma \ref{lem_3.9}, the tangent space at $[f]$ is decomposed as $T_{[f]}\mathcal{\QB}_d^{fm} = T_{[f]}\Bcal_d^{fm} \oplus J\cdot T_{[f]}\Bcal_d^{fm}$. 
There exist two vectors $v_1,v_2\in T_{[f]}\Bcal_d^{fm}$  such that $v=v_1+ J\cdot v_2$. Then for any repelling cycle $C$ of $f$, we have
\begin{align*}
    D L_C(v) &= \Re(D \mathcal{L}_C(v))\\
    &= \Re(D \mathcal{L}_C(v_1)+ D \mathcal{L}_C(J\cdot v_2))\\
    &=\Re(D \mathcal{L}_C(v_1)+i\cdot D\mathcal{L}_C(v_2)).
\end{align*}
Notice that if $w \in T_{[f]}\Bcal_d^{fm}$, then $D L_C(w) = D \mathcal{L}_C(w)$ is a real number. Therefore, continuing the above calculation, we have
\begin{align*}
\Re(D \mathcal{L}_C(v_1)+i\cdot D \mathcal{L}_C(v_2)) = \Re(D L_C(v_1)+i\cdot D L_C(v_2)) = D L_C(v_1).
\end{align*}
Hence $D L_C(v) = D L_C(v_1)$ for any repelling cycle $C$ of $f$.

On the other hand, since $||v||_{\mathcal{P}}=0$, if $\{[f_t]\}_{t \in (-\epsilon,\epsilon)}$ in $\QB_d^{fm}$ is such that $[f_0]=[f]$ and $\frac{d}{dt}|_{t=0}[f_t] = v$, we have
\begin{align*}
    0 & = \left.\frac{d}{dt}\right|_{t=0} \delta(f_t) \log{|\lambda_C(f_t)|}\\
    & = \left.\frac{d}{dt}\right|_{t=0} \log{|\lambda_C(f_t)|}.
\end{align*}
The second equality follows from the fact that $\delta(f_0) = 1\le \delta(f_t)$ and $\frac{d}{dt}\big|_{t=0}\delta(f_t) = 0$ as \textcolor{black}{$[f] \in \Bcal_{d}^{fm}$}. Hence, we have $L_C'(v) =0$ for any repelling cycle $C$ of $f$. 

Since $D L_C(v) = D L_C(v_1)$, we have $D L_C(v_1) = 0$ for any repelling cycle $C$ of $f$. This implies that $||v_1||_{\mathcal{P}}=0$. Since $||\cdot||_\Pcal$ is non-degenerate on $T_{[f]}\Bcal_d^{fm}$, we obtain that $v_1=0$. Therefore $v = J \cdot v_2$. This completes the proof.
\end{proof}

\bibliography{PressureMetric}

\begin{thebibliography}{McM09b}

\bibitem[ACR23]{Aougab23}
Tarik Aougab, Matt Clay, and Yo'av Rieck.
\newblock Thermodynamic metrics on outer space.
\newblock {\em Ergodic Theory Dynam. Systems}, 43(3):729--793, 2023.

\bibitem[Ahl54]{Ahlfors_QCMappings}
Lars~V. Ahlfors.
\newblock On quasiconformal mappings.
\newblock {\em J. Analyse Math.}, 3:1--58; correction, 207--208, 1954.

\bibitem[BCK23]{BCK_PressureMetricQFPunctred}
Harrison Bray, Richard Canary, and Lien-Yung Kao.
\newblock Pressure metrics for deformation spaces of quasifuchsian groups with parabolics.
\newblock {\em Algebr. Geom. Topol.}, 23(8):3615--3653, 2023.

\bibitem[BCLS15]{BCLS}
Martin Bridgeman, Richard Canary, Fran\c{c}ois Labourie, and Andres Sambarino.
\newblock The pressure metric for {A}nosov representations.
\newblock {\em Geom. Funct. Anal.}, 25(4):1089--1179, 2015.

\bibitem[Ber60]{Bers_SimultaneousUniform}
Lipman Bers.
\newblock Simultaneous uniformization.
\newblock {\em Bull. Amer. Math. Soc.}, 66:94--97, 1960.

\bibitem[Ber61]{Bers_EmbBdd}
Lipman Bers.
\newblock Correction to ``{S}paces of {R}iemann surfaces as bounded domains''.
\newblock {\em Bull. Amer. Math. Soc.}, 67:465--466, 1961.

\bibitem[Ber74]{Bers_AugTeich}
Lipman Bers.
\newblock Spaces of degenerating {R}iemann surfaces.
\newblock In {\em Discontinuous groups and {R}iemann surfaces ({P}roc. {C}onf., {U}niv. {M}aryland, {C}ollege {P}ark, {M}d., 1973)}, volume No. 79 of {\em Ann. of Math. Stud.}, pages 43--55. Princeton Univ. Press, Princeton, NJ, 1974.

\bibitem[Bon86]{Bonahon_EndsHyp3Mfd}
Francis Bonahon.
\newblock Bouts des vari\'{e}t\'{e}s hyperboliques de dimension {$3$}.
\newblock {\em Ann. of Math. (2)}, 124(1):71--158, 1986.

\bibitem[Bon88]{Bonahon_Currents}
Francis Bonahon.
\newblock The geometry of {T}eichm\"{u}ller space via geodesic currents.
\newblock {\em Invent. Math.}, 92(1):139--162, 1988.

\bibitem[Bow79]{Bowen79_Markov}
Rufus Bowen.
\newblock Hausdorff dimension of quasicircles.
\newblock {\em Inst. Hautes \'{E}tudes Sci. Publ. Math.}, (50):11--25, 1979.

\bibitem[Bow75]{Bowen74}
Rufus Bowen.
\newblock Some systems with unique equilibrium states.
\newblock {\em Math. Systems Theory}, 8(3):193--202, 1974/75.

\bibitem[Bri10]{Bridgeman_WPMetricQF}
Martin Bridgeman.
\newblock Hausdorff dimension and the {W}eil-{P}etersson extension to quasifuchsian space.
\newblock {\em Geom. Topol.}, 14(2):799--831, 2010.

\bibitem[BT05]{BridgemanTaylor2}
Martin Bridgeman and Edward~C. Taylor.
\newblock Patterson-{S}ullivan measures and quasi-conformal deformations.
\newblock {\em Comm. Anal. Geom.}, 13(3):561--589, 2005.

\bibitem[BT08]{BridgemanTaylor}
Martin Bridgeman and Edward Taylor.
\newblock An extension of the {W}eil-{P}etersson metric to quasi-{F}uchsian space.
\newblock {\em Math. Ann.}, 341(4):927--943, 2008.

\bibitem[CC19]{ClimCyr}
Vaughn Climenhaga and Van Cyr.
\newblock Positive entropy equilibrium states.
\newblock {\em Israel J. Math.}, 232(2):899--920, 2019.

\bibitem[Cha94]{Champ94}
Christophe Champetier.
\newblock Petite simplification dans les groupes hyperboliques.
\newblock {\em Ann. Fac. Sci. Toulouse Math. (6)}, 3(2):161--221, 1994.

\bibitem[Gro87]{Gromov87}
M.~Gromov.
\newblock Hyperbolic groups.
\newblock In {\em Essays in group theory}, volume~8 of {\em Math. Sci. Res. Inst. Publ.}, pages 75--263. Springer, New York, 1987.

\bibitem[GW12]{GW11}
Olivier Guichard and Anna Wienhard.
\newblock Anosov representations: domains of discontinuity and applications.
\newblock {\em Invent. Math.}, 190(2):357--438, 2012.

\bibitem[HN23]{HeNie_MetricHypComp}
Yan~Mary He and Hongming Nie.
\newblock A {R}iemannian metric on hyperbolic components.
\newblock {\em Math. Res. Lett.}, 30(3):733--764, 2023.

\bibitem[Hub06]{Hub_vol1}
John~Hamal Hubbard.
\newblock {\em Teichm\"{u}ller theory and applications to geometry, topology, and dynamics. {V}ol. 1}.
\newblock Matrix Editions, Ithaca, NY, 2006.
\newblock Teichm\"{u}ller theory, With contributions by Adrien Douady, William Dunbar, Roland Roeder, Sylvain Bonnot, David Brown, Allen Hatcher, Chris Hruska and Sudeb Mitra, With forewords by William Thurston and Clifford Earle.

\bibitem[IT92]{ImaTan}
Y.~Imayoshi and M.~Taniguchi.
\newblock {\em An introduction to {T}eichm\"{u}ller spaces}.
\newblock Springer-Verlag, Tokyo, 1992.
\newblock Translated and revised from the Japanese by the authors.

\bibitem[Ivr14]{Ivrii}
Oleg Ivrii.
\newblock {\em The geometry of the {W}eil-{P}etersson metric in complex dynamics}.
\newblock ProQuest LLC, Ann Arbor, MI, 2014.
\newblock Thesis (Ph.D.)--Harvard University.

\bibitem[Kao20]{Kao_PressureMetTeichPuncSurf}
Lien-Yung Kao.
\newblock Pressure metrics and {M}anhattan curves for {T}eichm\"{u}ller spaces of punctured surfaces.
\newblock {\em Israel J. Math.}, 240(2):567--602, 2020.

\bibitem[Lab06]{Labourie06}
Fran\c{c}ois Labourie.
\newblock Anosov flows, surface groups and curves in projective space.
\newblock {\em Invent. Math.}, 165(1):51--114, 2006.

\bibitem[Luo24]{Luo_GeoFiniteDegenI}
Yusheng Luo.
\newblock On geometrically finite degenerations {I}: boundaries of main hyperbolic components.
\newblock {\em J. Eur. Math. Soc.}, 26(10):3793--3861, 2024.

\bibitem[Lyu83]{Lyu83typical}
M.~Yu. Lyubich.
\newblock Some typical properties of the dynamics of rational mappings.
\newblock {\em Uspekhi Mat. Nauk}, 38(5(233)):197--198, 1983.

\bibitem[Mas76]{Masur_WPCompletion}
Howard Masur.
\newblock Extension of the {W}eil-{P}etersson metric to the boundary of {T}eichmuller space.
\newblock {\em Duke Math. J.}, 43(3):623--635, 1976.

\bibitem[Mas88]{Maskit}
Bernard Maskit.
\newblock {\em Kleinian groups}, volume 287 of {\em Grundlehren der mathematischen Wissenschaften [Fundamental Principles of Mathematical Sciences]}.
\newblock Springer-Verlag, Berlin, 1988.

\bibitem[McM85]{McM_simul}
Curtis~T. McMullen.
\newblock Simultaneous uniformization of blaschke products.
\newblock {\em preliminary notes}, 1985.

\bibitem[McM88]{McM_AutRat}
Curt McMullen.
\newblock Automorphisms of rational maps.
\newblock In {\em Holomorphic functions and moduli, {V}ol. {I} ({B}erkeley, {CA}, 1986)}, volume~10 of {\em Math. Sci. Res. Inst. Publ.}, pages 31--60. Springer, New York, 1988.

\bibitem[McM08]{McMullen08}
Curtis McMullen.
\newblock Thermodynamics, dimension and the {W}eil-{P}etersson metric.
\newblock {\em Invent. Math.}, 173(2):365--425, 2008.

\bibitem[McM09a]{McM_CompExpCircle}
Curtis~T. McMullen.
\newblock A compactification of the space of expanding maps on the circle.
\newblock {\em Geom. Funct. Anal.}, 18(6):2101--2119, 2009.

\bibitem[McM09b]{McM_RibbonRtreeHoloDyn}
Curtis~T. McMullen.
\newblock Ribbon {$\Bbb R$}-trees and holomorphic dynamics on the unit disk.
\newblock {\em J. Topol.}, 2(1):23--76, 2009.

\bibitem[McM10]{McM_DynDisk}
Curtis~T. McMullen.
\newblock Dynamics on the unit disk: short geodesics and simple cycles.
\newblock {\em Comment. Math. Helv.}, 85(4):723--749, 2010.

\bibitem[Mil12]{Milnor_HypComp}
John Milnor.
\newblock Hyperbolic components.
\newblock In {\em Conformal dynamics and hyperbolic geometry}, volume 573 of {\em Contemp. Math.}, pages 183--232. Amer. Math. Soc., Providence, RI, 2012.
\newblock With an appendix by A. Poirier.

\bibitem[Min05]{Min05}
Igor Mineyev.
\newblock Flows and joins of metric spaces.
\newblock {\em Geom. Topol.}, 9:403--482, 2005.

\bibitem[MS98]{McMSul_QCIII:TeichHoloDyn}
Curtis~T. McMullen and Dennis~P. Sullivan.
\newblock Quasiconformal homeomorphisms and dynamics. {III}. {T}he {T}eichm\"{u}ller space of a holomorphic dynamical system.
\newblock {\em Adv. Math.}, 135(2):351--395, 1998.

\bibitem[MSS83]{MSS_DynRatMap}
R.~Ma{\~{n}}\'{e}, P.~Sad, and D.~Sullivan.
\newblock On the dynamics of rational maps.
\newblock {\em Ann. Sci. \'{E}cole Norm. Sup. (4)}, 16(2):193--217, 1983.

\bibitem[Pol87]{Pol87}
Mark Pollicott.
\newblock Symbolic dynamics for {S}male flows.
\newblock {\em Amer. J. Math.}, 109(1):183--200, 1987.

\bibitem[PP90]{Parry90}
William Parry and Mark Pollicott.
\newblock Zeta functions and the periodic orbit structure of hyperbolic dynamics.
\newblock {\em Ast\'{e}risque}, (187-188):268, 1990.

\bibitem[PS14]{PollicottSharp_WPMetricMetricGraph}
Mark Pollicott and Richard Sharp.
\newblock A {W}eil-{P}etersson type metric on spaces of metric graphs.
\newblock {\em Geom. Dedicata}, 172:229--244, 2014.

\bibitem[Rue04]{Ruelle04}
David Ruelle.
\newblock {\em Thermodynamic formalism}.
\newblock Cambridge Mathematical Library. Cambridge University Press, Cambridge, second edition, 2004.
\newblock The mathematical structures of equilibrium statistical mechanics.

\bibitem[Sul84]{Sullivan84}
Dennis Sullivan.
\newblock Entropy, {H}ausdorff measures old and new, and limit sets of geometrically finite {K}leinian groups.
\newblock {\em Acta Math.}, 153(3-4):259--277, 1984.

\bibitem[Thu86]{Thu_Earthquakes}
William~P. Thurston.
\newblock Earthquakes in two-dimensional hyperbolic geometry.
\newblock In {\em Low-dimensional topology and {K}leinian groups ({C}oventry/{D}urham, 1984)}, volume 112 of {\em London Math. Soc. Lecture Note Ser.}, pages 91--112. Cambridge Univ. Press, Cambridge, 1986.

\bibitem[Wol86]{Wolpert_WPmetricbyThurston}
Scott~A. Wolpert.
\newblock Thurston's {R}iemannian metric for {T}eichm\"{u}ller space.
\newblock {\em J. Differential Geom.}, 23(2):143--174, 1986.

\bibitem[Xu19]{Xu_PressureMetricBordered}
Binbin Xu.
\newblock Incompleteness of the pressure metric on the {T}eichm\"{u}ller space of a bordered surface.
\newblock {\em Ergodic Theory Dynam. Systems}, 39(6):1710--1728, 2019.

\end{thebibliography}
\bibliographystyle{alpha}

\end{document}